\pgfplotsset{compat=newest}
\title{\large{\bf Comparison Principle, A.B.P.-type estimates for solutions of quasi-linear elliptic equations in non-divergence form and some implications}}
\author{\it by \smallskip \\ Junior da S. Bessa \footnote{\noindent Universidade Estadual de Campinas - UNICAMP. Instituto de Matem\'{a}tica, Estat\'{i}stica e Computa\c{c}\~{a}o Cient\'{i}fica - IMECC. Departamento  de Matemática. Bar\~{a}o Geraldo, Campinas - SP, Brazil. \noindent \texttt{E-mail address: \url{jbessa@unicamp.br}}},\qquad
Reshmi Biswas\footnote{\noindent  Universidad T\'{e}cnica Federico Santa Mar\'{i}a, Departamento de Matem\'{a}tica, Avda. Espa\~{n}a, 1680, Valparaíso, Chile. \noindent \texttt{E-mail address: \url{rbiswas@usm.cl}}},  \quad Jo\~{a}o Vitor  da Silva
\footnote{\noindent Universidade Estadual de Campinas - UNICAMP Instituto de Matem\'{a}tica, Estat\'{i}stica e Computa\c{c}\~{a}o Cient\'{i}fica - IMECC. Departamento  de Matemática. Bar\~{a}o Geraldo, Campinas - SP, Brazil. \noindent \texttt{E-mail address: \url{jdasilva@unicamp.br}}}, \quad Ginaldo S\'{a}\footnote{\noindent Centro de Modelamiento Matemático (CNRS IRL2807), Universidad de Chile, Santiago, Chile.
\noindent \texttt{E-mail address: \url{gdesantana@dim.uchile.cl}}, 
} \\ \quad $\&$ \\ \quad Makson Santos\footnote{\noindent University of Lisbon, Center for Mathematical Studies (CEMS.UL), Lisboa, Portugal. \noindent \texttt{E-mail address: \url{msasantos@fc.ul.pt}}}
}
\newlength{\hchng}
\newlength{\vchng}
\newcommand{\defeq}{\mathrel{\mathop:}=}
\newtheorem{theorem}{Theorem}[section]
\newtheorem{lemma}[theorem]{Lemma}
\newtheorem{proposition}[theorem]{Proposition}
\newtheorem{corollary}[theorem]{Corollary}
\theoremstyle{definition}
\newtheorem{definition}[theorem]{Definition}
\newtheorem{example}[theorem]{Example}
\theoremstyle{remark}
\newtheorem{remark}[theorem]{Remark}
\numberwithin{equation}{section}
\newcommand{\intav}[1]{\mathchoice {\mathop{\vrule width 6pt height 3 pt depth  -2.5pt
\kern -8pt \intop}\nolimits_{\kern -6pt#1}} {\mathop{\vrule width
5pt height 3  pt depth -2.6pt \kern -6pt \intop}\nolimits_{#1}}
{\mathop{\vrule width 5pt height 3 pt depth -2.6pt \kern -6pt
\intop}\nolimits_{#1}} {\mathop{\vrule width 5pt height 3 pt depth
-2.6pt \kern -6pt \intop}\nolimits_{#1}}}
\begin{document}
\maketitle

\begin{abstract}
\noindent  We establish existence and $L^{\infty}$ bounds--an Alexandroff--Bakelman--Pucci (A.B.P. for short) type maximum principle--for solutions of a class of quasilinear elliptic equations in non-divergence form with a general degeneracy law and a Hamiltonian term, given by
\[
-\Psi(x, |Du|)\Delta_p^{\mathrm{N}}u(x)+\mathscr{H}(x,Du) = f(x) \quad \text{in} \quad \Omega, \qquad 1<p\leq \infty,
\]
under suitable structural assumptions on the data. Our method exploits tools from convex analysis and differential geometry, unveiling the intrinsic geometry of quasilinear operators and their solutions. In particular, our findings are also striking for the $(3-h)$-homogeneous infinity-Laplacian ($h \in [0, 2]$), 
$\Delta^{h}_{\infty} u   =  \frac{1}{|Du|^h} \langle D^2u\, Du, Du \rangle$,
a highly degenerate, fully nonlinear operator closely linked to Tug-of-War games. Furthermore, we derive versions of the Comparison Principle, the Strong Maximum Principle, the Hopf Lemma, and Liouville-type theorems under appropriate assumptions, each carrying independent mathematical significance.


\medskip
\noindent \textbf{Keywords}: A.B.P. Maximum Principle, Quasi-linear elliptic PDEs. non-divergence operators, Liouville-type results.
\vspace{0.2cm}
	
\noindent \textbf{AMS Subject Classification: Primary 35B45,  35B51; 35B53, 35J60; Secondary 35D40
}
\end{abstract}

\newpage

\section{Introduction}\label{Section1}

Recent advances in nonlinear elliptic PDEs have raised several natural questions that remain open, leaving key aspects of the theory incomplete. For motivation, consider the model equation
\begin{equation}\label{Problem}
|Du|^{\theta}\Delta_{p}^{\mathrm{N}}u 
+ \langle \mathfrak{B}(x),Du\rangle |Du|^{\theta} 
+ \varrho(x)|Du|^{\sigma} 
= f(x) \quad \text{in } \mathrm{B}_1 \quad (\text{for}\,\,\, p \in (1, \infty)),
\end{equation}
where $\Delta_{p}^{\mathrm{N}}u$ is the Game-theoretic p-Laplace operator, subject to the continuous Dirichlet boundary condition $u=g$, with $\theta>0$ and $\theta<\sigma<\theta+1$. We highlight the following issues:

\begin{itemize}
    \item[\checkmark] \textbf{Existence and uniqueness:} If the data $\mathfrak{B}, \varrho$, and $f$ are continuous, does \eqref{Problem} admit a viscosity solution, and is it unique?
    
    \textbf{Regularity:} Bessa and Da~Silva proved in \cite[Theorem~1.4]{BessaDaSilva2025} the estimate
    \[
        \|u\|_{\mathrm{C}^{1,\alpha}(\mathrm{B}_{1/2})} 
        \leq \mathrm{C} \cdot\left( 
            \|u\|_{L^{\infty}(\mathrm{B}_{1})} 
            + \|\varrho\|^{\tfrac{1}{1+\theta-\sigma}}_{L^{\infty}(\mathrm{B}_{1})} 
            + \|f\|^{\tfrac{1}{1+\theta}}_{L^{\infty}(\mathrm{B}_{1})} 
        \right),
    \]
    where $\mathrm{C}>0$ depends only on $n$, $p$, $\sigma$, $\theta$, and 
    $\|\mathfrak{B}\|_{L^{\infty}(\mathrm{B}_1;\mathbb{R}^n)}$.
    
    \item[\checkmark] \textbf{$L^{\infty}$ bounds:} Can one bound $\|u\|_{L^{\infty}(\mathrm{B}_{1})}$ solely in terms of the data in \eqref{Problem}, in the spirit of an A.B.P.-type estimate? Such bounds are crucial in several contexts of mathematical and numerical analysis.
    
     \textbf{Qualitative results:} Bessa and Da~Silva obtained a version of the Strong Maximum Principle and the Hopf Lemma in \cite[Theorem~1.9]{BessaDaSilva2025} under appropriate structural conditions.
    
    \item[\checkmark] \textbf{Extensions:} Do the above existence, A.B.P.-type bounds, and qualitative properties extend to broader classes of degenerate or singular quasilinear elliptic equations in non-divergence form with suitable Hamiltonian terms?
    
  \textbf{Key applications:} These estimates and qualitative results have recently been employed to derive sharp geometric bounds in various free boundary problems, including dead-core \cite{alcantara2025}, one-phase Bernoulli-type \cite{BJrDaSRic, daSRRV2023}, and obstacle-type models \cite{DaSilvaVivas2021}, among others. Hence, such results will play a pivotal role in addressing quasilinear equations in non-divergence form across broad settings. 
\end{itemize}

In this work, we establish the existence and Alexandroff-Bakelman-Pucci (A.B.P. for short) type estimates for both singular and degenerate quasilinear equations governed by the normalized \(p\)-Laplacian operator, given by
\begin{eqnarray}\label{1.1}
-\Psi(x,|D u|)\Delta_{p}^{\mathrm{N}}u+\mathscr{H}(x, D u)=f(x)\,\,\, \mbox{in}\,\,\, \Omega,
\end{eqnarray}
where $(x, \xi) \mapsto \Psi(x, |\xi|), \mathscr{H}(x, \xi)$ are suitable functions (to be specified \textit{a posteriori}), $f \in L^\infty(\Omega) \cap C^0(\Omega)$, $\Omega$ is a domain in $\mathbb R^n$, and \(1<p\leq +\infty\). Furthermore, we address versions of the Comparison Principle, the Strong Maximum Principle, the Hopf Lemma, and Liouville-type results under appropriate assumptions on data, to be detailed later.

For a fixed \( p \in (1, \infty) \), the operator
\[
\Delta^{\mathrm{N}}_p u = |D u|^{2-p} \operatorname{div}(|D u|^{p-2} D u) = \Delta u + (p-2) \left\langle D^2 u \frac{D u}{|D u|}, \frac{D u}{|D u|}\right\rangle
\]
denotes the \textit{Normalized \( p \)-Laplacian operator} or \textit{Game-theoretic \( p \)-Laplace operator}, which arises in certain stochastic models (see \cite{BlancRossi-Book}, \cite{Lewicka20}, and \cite{Parvia2024} for comprehensive surveys on Tug-of-War games and their associated PDEs). The fully nonlinear degenerate operator
\[
\Delta_{\infty}^{\mathrm{N}} u(x) \coloneqq \left\langle D^2 u \frac{D u}{|D u|}, \frac{D u}{|D u|}\right\rangle
\]
is referred to as the \textit{Normalized infinity-Laplacian}, which also plays a significant role in the interplay between nonlinear PDEs and Tug-of-War games (cf. \cite{BlancRossi-Book} and \cite{Parvia2024}).
\medskip

The function $\Psi(\cdot, \cdot)$ plays a central role in this article, as it determines whether the equation is degenerate or singular. Moreover, $\Psi$ allows us to recover several well-known degeneracy laws from the literature, for instance, the powers of the gradient when $\Psi(x,t) := t^{\hat p}$, as explained in the following. 

Now, recall that a function \(g:(0,+\infty) \to\mathbb{R}\) is said \textit{almost non-decreasing} with constant \(\mathfrak{L} \geq 1\) if
\[
g(s) \leq \mathfrak{L}g(t), \text{ for any } 0 < s \leq t.
\]
The definition of \textit{almost non-increasing} function with constant \(\mathfrak{L} \geq 1\) is similar.

Throughout this manuscript, we impose the following assumptions on $\Psi$, which are general enough to encompass many important cases studied in the literature (cf. \cite{BBLL24}). 

\begin{itemize}
\item [\((\Psi1)\)] For any domain $\Omega \subset \mathbb R^n$, the vector field $\Psi: \Omega\times [0,+\infty)\to[0,+\infty)$ is a continuous function such that there exist positive constants \(0<\mathfrak{a}\leq \mathfrak{b}\) satisfying
\[
\mathfrak{a}\leq \Psi(x,1)\leq \mathfrak{b},\, \text { for all } x\in \Omega.
\]
\item[\((\Psi2)\)] There exist constants \(s_{\Psi}\geq i_{\Psi}>-1\) such that the map \(t\longmapsto \frac{\Psi(t)}{t^{i_{\Psi}}}\) is almost non-decreasing with constant \(\mathfrak{L}_{1} \geq 1\) and the map \(t\longmapsto\frac{\Psi(t)}{t^{s_{\Psi}}}\) is almost non-increasing with constant \(\mathfrak{L}_{2} \geq 1\) in \((0,+\infty)\), for all \(x\in \Omega\).
\end{itemize}

On the other hand, the Hamiltonian $\mathscr H$ will satisfy the following conditions (unless mentioned otherwise - cf. \cite{BessaDaSilva2025}):
\begin{itemize}
\item[$(\mathbf{H1})$] We also suppose that the 
Hamiltonian term \(\mathscr{H}:\Omega\times\mathbb{R}^{n}\to\mathbb{R}\) satisfies the following condition
\[
|\mathscr{H}(x,\xi)|\leq \varrho(x)|\xi|^{\sigma},
\]
for all \((x,\xi)\in \Omega\times \mathbb{R}^{n}\), for some positive function \(\varrho\in L_+^{\infty}(\Omega)\cap C^{0}(\Omega)\) and $0\leq \mathrm{c}_0 <\sigma < \mathrm{c_1}< \infty$.


\end{itemize}
\begin{remark}\label{obsdePsi}
By conditions $(\Psi1)$ and $(\Psi2)$, it is possible to verify that:
\begin{itemize}
\item [$i)$] For any \(x\in \Omega\) and \(0<t\leq 1\), it holds that
\[
\Psi(x,t)\geq \mathfrak{L}_{2}\mathfrak{a}t^{s_{\Psi}}.
\]
\item [$ii)$] For any \(x\in \Omega\) and \(t> 1\), it holds that
\[
\Psi(x,t)\geq \frac{\mathfrak{a}}{\mathfrak{L}_{1}}t^{i_{\Psi}}.
\]
\end{itemize}
\end{remark}

\begin{example} 
In the sequel, we present some examples of vector fields  $\Psi$, which satisfy the above assumptions:

\begin{enumerate}
    \item {\bf Constant exponent power-type:}  
    \[
    \Psi(x, t) = |t|^{\hat{p}} \quad \text{for } \hat{p} > -1, \quad \text{in this context, we have} 
    \quad i_\Psi = s_\Psi = \hat{p}.
    \]

    \item {\bf Double phase-type:}  
    \[
    \Psi(x, t) = |t|^{\hat{p}} + \mathfrak{a}(x)|t|^{\hat{q}} \quad \text{for } -1 < \hat{p} < \hat{q} < \infty \text{ and } 0 \leq \mathfrak{a}(x) \in C^0(\Omega), 
    \]
    here, we have
    $i_\Psi = \hat{p}$, and $s_\Psi = \hat{q}$.
\item {\bf Borderline case of double phase-type:}  
    \[
    \Psi(x, t) = |t|^{\hat{p}} + \mathfrak{a}(x)|t|^{\hat{p}} \log(1+t) \quad \text{for } -1 < \hat{p} < \infty \text{ and } 0 \leq \mathfrak{a} \in C^0(\Omega),
    \]
here, we have $i_\Psi =\hat{p}$, and $s_\Psi = \hat{p}+1$.
    \item {\bf Variable exponent power-type:}  
    \[
    \Psi(x, t) = |t|^{\hat{p}(x)} \quad \text{for } \hat{p} \in C^0(\Omega) \text{ and } -1 < \inf_{\Omega} \hat{p}(x) \leq \sup_{\Omega} \hat{p}(x) < \infty.\]
    In such a context, we have $\displaystyle i_\Psi = \inf_{\Omega} \hat{p}(x), \quad \text{and} \quad  s_\Psi = \sup_{\Omega} \hat{p}(x).$

    \item {\bf Variable exponent double phase-type:}  
    \[
    \Psi(x, t) = |t|^{\hat{p}(x)} + \mathfrak{a}(x)|t|^{\hat{q}(x)} \quad \text{for } \hat p, \hat q \in C^0(\Omega),
    \]
    \[
    -1 < \inf_{\Omega} \min\{\hat{p}(x), \hat{q}(x)\}, \quad \sup_{ \Omega} \max\{\hat{p}(x), \hat{q}(x)\} < \infty, \quad \text{and } 0 \leq \mathfrak{a} \in C^0(\Omega):
    \]
     In such a context, we have $\displaystyle     i_\Psi = \inf_{ \Omega} \min\{\hat{p}(x), \hat{q}(x)\},  \quad \text{and} \quad s_\Psi = \sup_{\Omega} \max\{\hat{p}(x), \hat{q}(x)\}.$

     \item {\bf Borderline case of variable double phase-type:}  
    \[
    \Psi(x, t) = |t|^{\hat{p}(x)} + \mathfrak{a}(x)|t|^{\hat{p}(x)} \log(1+t).
    \]
for $\hat{p} \in C^0(\Omega)$ with and $\displaystyle -1 < \inf_{\Omega} \hat{p}(x) \leq \sup_{\Omega} \hat{p}(x) < \infty$, and $0 \leq \mathfrak{a} \in C^0(\Omega)$. In this scenario, we have $\displaystyle i_\Psi = \inf_{\Omega} \hat{p}(x)$, and $ \displaystyle s_\Psi = \sup_{\Omega} \hat{p}(x)+1$.
\end{enumerate}

\end{example}

\begin{example}  
Typical  examples of Hamiltonians $\mathscr{H} :\Omega\times\mathbb{R}^{n}\to\mathbb{R}$ that satisfy the condition \(({\bf H1})\) include:
\begin{itemize}
\item [$i)$] \(\mathscr{H}(x,{\xi})=\langle \mathfrak{B}(x),{\xi}\rangle|{\xi}|^{i_\Psi}+\varrho(x)|{\xi}|^{\sigma}\), for $0<\mathrm{c}_0< \sigma<\mathrm{c}_1=1+i_{\Psi}$, where the vector field \(\mathfrak{B}\) and \(\varrho >0\) belong to $L^{\infty}(\Omega)\cap C^{0}(\Omega)$. 

\item[$ii)$]  \(\mathscr{H}(x,{\xi})=\mathfrak{a}(x)|{\xi}|^{\theta}+\mathfrak{b}(x)|{\xi}|^{\sigma}\), for $0<\mathrm{c}_0 = \theta< \sigma < \mathrm{c}_1 = 1+i_{\Psi} $, where $\mathfrak{a}, \mathfrak{b} \in L^{\infty}(\Omega)\cap C^{0}(\Omega)$.
\item[$iii)$] In the previous examples $i), ii)$, replace $i_\Psi$ by $s_\Psi$.

\end{itemize}
\end{example}
\medskip
Now we are ready to present the first main result of this article, Alexandroff-Bakelman-Pucci (A.B.P.) Maximum Principle, for the case $p=\infty$.

\begin{theorem}\label{Theorem3.1}
Let $\Omega\subset \mathbb R^n$ be a bounded domain and  \(u\in C^{0}(\overline{\Omega})\) be a viscosity solution to 
\begin{align}\label{inftt}
-\Psi(x,|D  u|)\Delta_{\infty}^{\mathrm{N}}u+\mathscr{H}(x,D  u)\leq f(x)\,\,\, \mathrm{in}\,\,\, \Omega. 
\end{align}
such that $\Psi: \Omega\times [0, \infty) \to [0, \infty)$ satisfies the structural assumptions $(\Psi1), (\Psi2)$, the  Hamiltonian term \(\mathscr{H}:\Omega\times\mathbb{R}^{n}\to\mathbb{R}\) satisfies $(\mathbf{H1})$ with  $\mathrm{c}_0>0$ and  $\mathrm{c}_1=2+i_\Psi$, and $f\in C^{0}(\Omega) \cap L^{\infty}(\Omega)$. Then, 
\begin{eqnarray*}
\min\left\{\left(\frac{\sup\limits_{\Omega}u-\sup\limits_{\partial \Omega}u^{+}}{\operatorname{diam}(\Omega)}\right)^{2+i_{\Psi}-\sigma},\left(\frac{\sup\limits_{\Omega}u-\sup\limits_{\partial \Omega}u^{+}}{\operatorname{diam}(\Omega)}\right)^{2+s_{\Psi}}\right\}\\\leq\mathrm{C}\int\limits_{\sup\limits_{\partial\Omega} u^{+}}^{\sup\limits_{\Omega}u}\|(f^{+}+\varrho)\chi_{\mathrm{C}(u^{+})}\|_{L^{\infty}(\{u^{+}=\tau\})}d\tau.
\end{eqnarray*}
Here,  we have extended \(u^{+}\) by zero outside \(\Omega\), \(\mathrm{C}(u^{+})\) is the contact set with the concave envelope of \(u^{+}\) in the convex hull  \(\Omega^{*}=\operatorname{conv}(\Omega)\) and \(\mathrm{C}=\frac{(2+s_{\Psi})\mathfrak{L}_1}{a}\).

Similarity, if \(u\) satisfies 
\[
-\Psi(x,|D  u|)\Delta_{\infty}^{\mathrm{N}}u+\mathscr{H}(x,D  u)\geq f(x)\,\,\, \mbox{in}\,\,\, \Omega
\]
in the viscosity sense, then
\begin{eqnarray*}
\min\left\{\left(\frac{\sup\limits_{\Omega}u^{-}-\sup\limits_{\partial \Omega}u^{-}}{\operatorname{diam}(\Omega)}\right)^{2+i_{\Psi}-\sigma},\left(\frac{\sup\limits_{\Omega}u^{-}-\sup\limits_{\partial \Omega}u^{-}}{\operatorname{diam}(\Omega)}\right)^{2+s_{\Psi}}\right\}\\
\leq\mathrm{C}\int\limits_{\sup\limits_{\partial\Omega} u^{-}}^{\sup\limits_{\Omega}u^{-}}\|(f^{-}+\varrho)\chi_{\mathrm{C}(u^{-})}\|_{L^{\infty}(\{u^{-}=\tau\})}d\tau.
\end{eqnarray*}
\end{theorem}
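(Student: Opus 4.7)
The approach I would take is the classical ABP strategy through the concave envelope of $u^+$, adapted to the one-dimensional nature of $\Delta_\infty^{\mathrm{N}}$ along the gradient direction. The argument naturally splits into a geometric setup, a pointwise viscosity inequality on the contact set, and a slicing/coarea computation that produces the $d\tau$ integral on the right-hand side.

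First I would assume, without loss of generality, that $M := \sup_\Omega u - \sup_{\partial\Omega} u^+ > 0$ (otherwise the estimate is trivial), set $d := \operatorname{diam}(\Omega)$, extend $u^+$ by zero outside $\Omega$, and introduce the concave envelope $\Gamma$ of $u^+$ on $\Omega^* := \operatorname{conv}(\Omega)$, with contact set $\mathrm{C}(u^+) = \{u^+ = \Gamma\}$. A standard geometric observation is that the gradient map $D\Gamma$ restricted to $\mathrm{C}(u^+)$ has image containing $\overline{B_{M/d}(0)}$. Then, at any Aleksandrov-differentiability point $x_0 \in \mathrm{C}(u^+)$ (a.e.\ on $\mathrm{C}(u^+)$ by Aleksandrov's theorem for concave functions), the paraboloid given by the second-order Taylor expansion of $\Gamma$ at $x_0$ touches $u^+$ from above, and plugging it into the viscosity subsolution inequality for \eqref{inftt} gives, pointwise a.e.\ on $\mathrm{C}(u^+)$ where $D\Gamma(x_0)\neq 0$,
\[
-\Psi(x_0,|D\Gamma(x_0)|)\,\bigl\langle D^2\Gamma(x_0)\,\nu,\nu\bigr\rangle + \mathscr{H}(x_0,D\Gamma(x_0)) \;\le\; f(x_0), \qquad \nu = \frac{D\Gamma(x_0)}{|D\Gamma(x_0)|}.
\]
Invoking $(\mathbf{H1})$ and the lower bounds on $\Psi$ of Remark \ref{obsdePsi} (namely $\Psi(x,t)\ge \mathfrak{L}_2 \mathfrak{a}\,t^{s_\Psi}$ when $t\le 1$ and $\Psi(x,t)\ge(\mathfrak{a}/\mathfrak{L}_1)\,t^{i_\Psi}$ when $t>1$), this produces the pointwise one-sided control
\[
0 \;\le\; -\bigl\langle D^2\Gamma(x_0)\,\nu,\nu\bigr\rangle \;\le\; \frac{f^+(x_0) + \varrho(x_0)\,|D\Gamma(x_0)|^\sigma}{\Psi(x_0,|D\Gamma(x_0)|)}.
\]

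The heart of the proof is then converting this pointwise bound into the claimed integral estimate. Because $\Delta_\infty^{\mathrm{N}}\Gamma$ is purely one-dimensional along the gradient direction, the right object is a 1D ABP applied along the gradient lines of $\Gamma$ rather than the full volumetric area formula. I would integrate the above inequality along a gradient line, then repackage the result via the coarea formula with the level parameter $\tau = u^+$ ranging in $[\sup_{\partial\Omega} u^+, \sup_\Omega u^+]$; on each slice $\{u^+ = \tau\}$ the integrand is bounded above by its essential supremum on $\{u^+ = \tau\}\cap \mathrm{C}(u^+)$, which is precisely the meaning of the restriction $\chi_{\mathrm{C}(u^+)}$ appearing in the statement. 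Splitting according to whether $|D\Gamma|\le 1$ or $|D\Gamma|>1$ and using the corresponding lower bound for $\Psi$ generates respectively the competing exponents $2+s_\Psi$ and $2+i_\Psi -\sigma$ on the left-hand side, and the $\min$ in the statement accounts for the fact that one of the two regimes must dominate depending on the relative size of $M/d$. The assumption $\sigma < 2 + i_\Psi$ (from $(\mathbf{H1})$ with $\mathrm{c}_1 = 2+i_\Psi$) is exactly what keeps the exponent $2+i_\Psi -\sigma$ positive and allows the Hamiltonian term $\varrho|D\Gamma|^\sigma$ to be absorbed.

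The main obstacle I expect is the coarea/slicing step: rigorously justifying the 1D integration along the gradient trajectories of $\Gamma$ (which are only defined almost everywhere on $\mathrm{C}(u^+)$), reassembling them into a level-set integral carrying essential suprema on each slice, and keeping careful track of the degenerate/singular scaling of $\Psi$ in the two regimes while absorbing the Hamiltonian. Once this is established, the dual inequality for supersolutions follows immediately by applying the same argument to $-u$ (equivalently, to $u^-$ and its concave envelope), which replaces $f^+$ by $f^-$ and yields the mirror estimate.
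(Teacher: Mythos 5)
Your proposal gets the geometric skeleton right (concave envelope, contact set, viscosity touching, regime-splitting at $|D\Gamma|\lessgtr 1$ to produce the two competing exponents, and the $\min$ accounting for $\mathrm{M}/d\lessgtr 1$), but the central mechanism you propose for converting the pointwise estimate into the level-set integral is not what makes the argument work, and a regularity step you omit is in fact essential.

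On the mechanism: you assert that ``the right object is a $1$D ABP applied along the gradient lines of $\Gamma$ rather than the full volumetric area formula'' and that the $d\tau$ integral arises from reassembling those one-dimensional integrals via coarea. The paper's proof is exactly the volumetric area formula: one considers
\[
\mathscr{I}=\int_{D \Gamma_{r}(u^{+})(\Omega^{*}_{r})}\min\bigl\{|z|^{2+i_{\Psi}-\sigma-n},\, |z|^{2+s_{\Psi}-n}\bigr\}\,dz,
\]
pushes it forward via the change-of-variables formula for the Lipschitz map $D\Gamma_r$, picking up the Jacobian $\det(-D^{2}\Gamma_{r})$, and then coareas over levels of $\Gamma_r$. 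The indispensable ingredient at this point is Proposition~\ref{Proposition 1}(3), which decomposes $-D^2\Gamma_r$ at a.e.\ point of a level set into a tangential block $|\nabla\Gamma_r|\sum_i\kappa_i\,\tau_i\otimes\tau_i$ and a normal block $-\Delta_\infty^{\mathrm N}\Gamma_r\,\nu\otimes\nu$, giving (via Hadamard's inequality for the PSD matrix $-D^2\Gamma_r$) $\det(-D^2\Gamma_r)\le -\Delta_\infty^{\mathrm N}\Gamma_r\,|\nabla\Gamma_r|^{n-1}\prod_i\kappa_i$. The viscosity inequality bounds the normal factor, and then the tangential curvature integral $\int_{\{\Gamma_r=\tau\}}\prod_i\kappa_i\,d\mathcal{H}^{n-1}=\mathcal{H}^{n-1}(\partial B_1)$ is a Gauss--Bonnet identity (Theorem~\ref{theorem9}). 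This Gauss--Bonnet step is what collapses the slice-dependent geometry into a universal constant; without it, reassembling over gradient lines does not close, because the coarea factor $|D\Gamma_r|^{-1}$ and the level-set area $\mathcal{H}^{n-1}(\{\Gamma_r=\tau\})$ both depend on the slice in a way you have no control over. Your proposal never identifies this mechanism and would stall precisely at the step you yourself flag as the ``main obstacle.''

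On regularity: you invoke Aleksandrov twice-differentiability of the concave envelope and test the viscosity inequality with the Taylor paraboloid. But the geometric machinery above needs much more than a.e.\ twice-differentiability: Proposition~\ref{Proposition 1} and Theorem~\ref{theorem9} require the level sets of the envelope to be genuine $C^{1,1}$ hypersurfaces, which in turn requires $\Gamma_r(u^+)\in C^{1,1}$. For a merely continuous $u$ this can fail. The paper handles this by a two-step argument that you skip entirely: first prove the estimate when $u$ is \emph{semiconvex} (then Lemma~\ref{lemma5} yields $\Gamma_r(u^+)\in C^{1,1}_{\mathrm{loc}}$), and then reduce the general case to this one via sup-convolutions $u^\varepsilon(x)=\sup_{y}\{u(y)-\tfrac{1}{2\varepsilon}|x-y|^2\}$, which are semiconvex subsolutions of a perturbed equation, together with $\varepsilon$-stability as $\varepsilon\to 0^+$. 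Without this step, both the $C^{1,1}$ regularity needed for Gauss--Bonnet and the rigorous use of the contact-point Taylor expansions are unjustified.

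Your reading of how the two exponents arise, how $(\mathbf{H1})$ with $\mathrm{c}_1=2+i_\Psi$ keeps $2+i_\Psi-\sigma>0$, and how the supersolution case follows by a mirror argument are all consistent with the paper. But as written the proposal is not a proof: the slicing step is replaced by a heuristic that omits the determinant decomposition and Gauss--Bonnet, and the regularization via sup-convolution, which is load-bearing, is missing.
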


\begin{remark}
In the Theorem \ref{Theorem3.1}, when \(\Psi(x,t)=1\) and \(\mathscr{H}(x,\xi)=0\), the A.B.P.-type estimates for subsolutions of the normalized infinity Laplacian obtained in \cite{Charro13} are recovered, since $i_{\Psi} = s_{\Psi} = 0$ and $\mathfrak{L}_{1} = \mathfrak{L}_{2} = \mathfrak{a} = \mathfrak{b} = 1$.
\end{remark}

\medskip
The A.B.P.-type estimates obtained in Theorem \ref{Theorem3.1} allow us to establish analogous estimates for the normalized \(p\)-Laplacian for \(p \in (1,\infty)\). More precisely, we have the following result: 

\begin{theorem}\label{A.B.P.caseforp}
Let $\Omega\subset \mathbb R^n$ be a bounded domain and \(u\in C^{0}(\overline{\Omega})\) be a viscosity solution to
\begin{align}\label{plll}
-\Psi(x,|D  u|)\Delta_{p}^{\mathrm{N}} u + \mathscr{H}(x,D  u)\leq f(x) \quad \text{in} \quad \Omega,
\end{align}
such that $\Psi: \Omega\times [0, \infty) \to [0, \infty)$ satisfies the structural assumptions $(\Psi1), (\Psi2)$, the  Hamiltonian term \(\mathscr{H}:\Omega\times\mathbb{R}^{n}\to\mathbb{R}\) satisfies $(\mathbf{H1})$ with  $\mathrm{c}_0>0$ and  $\mathrm{c}_1=2+i_\Psi$, and $f\in C^{0}(\Omega) \cap L^{\infty}(\Omega)$. Then,
\begin{align}\label{est1A.B.P.}
\min\left\{\left(\frac{\sup\limits_{\Omega}u-\sup\limits_{\partial \Omega}u^{+}}{\operatorname{diam}(\Omega)}\right)^{2+i_{\Psi}-\sigma},\left(\frac{\sup\limits_{\Omega}u-\sup\limits_{\partial \Omega}u^{+}}{\operatorname{diam}(\Omega)}\right)^{2+s_{\Psi}}\right\}\nonumber\\
\leq\mathrm{C}\int\limits_{\sup\limits_{\partial\Omega} u^{+}}^{\sup\limits_{\Omega}u}\|(f^{+}+\varrho)\chi_{\mathrm{C}(u^{+})}\|_{L^{\infty}(\{u^{+}=\tau\})}d\tau,
\end{align}
where $\mathrm{C}=\frac{(2+s_{\Psi})\mathfrak{L}_1p}{(p-1)\mathfrak{a}}$. 
\end{theorem}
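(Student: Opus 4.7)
The plan is to reduce this theorem to the infinity-case established in Theorem \ref{Theorem3.1} through a pointwise algebraic comparison between $\Delta_p^{\mathrm{N}} u$ and $\Delta_\infty^{\mathrm{N}} u$ that holds at points where the Hessian of a touching paraboloid is nonpositive definite. This is natural because the A.B.P. machinery only exploits the subsolution inequality at contact points with the concave envelope, where such a configuration automatically holds.

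The key algebraic observation is the identity $\Delta_p^{\mathrm{N}} u = \mathrm{tr}\bigl(A(\nu) D^2 u\bigr)$ with $A(\nu) = I_n + (p-2)\,\nu \otimes \nu$ and $\nu = Du/|Du|$. For $p > 1$, the difference $A(\nu) - (p-1)\,\nu\otimes\nu = I_n - \nu\otimes\nu$ is the orthogonal projection onto $\nu^{\perp}$, hence positive semidefinite. Consequently, whenever $N \in \mathrm{Sym}(n)$ is negative semidefinite, the trace inequality $-\mathrm{tr}(A(\nu) N) \geq -(p-1)\langle N \nu, \nu\rangle$ holds, that is,
\[
\Delta_p^{\mathrm{N}} u \leq (p-1)\,\Delta_\infty^{\mathrm{N}} u \qquad \text{whenever } D^2 u \leq 0.
\]

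Next, I would transplant the contact-point argument of Theorem \ref{Theorem3.1}: at each $x_0 \in \mathrm{C}(u^+)$ where $u^+$ meets its concave envelope $\Gamma$ in $\Omega^{*}$, testing the viscosity subsolution inequality \eqref{plll} against an upper paraboloid $\varphi$ with $D^2\varphi(x_0) \leq 0$ and applying the pointwise bound above produces
\[
-\Psi\bigl(x_0, |D\varphi(x_0)|\bigr)\,\Delta_\infty^{\mathrm{N}} \varphi(x_0) + \tfrac{1}{p-1}\mathscr{H}\bigl(x_0, D\varphi(x_0)\bigr) \leq \tfrac{1}{p-1}f(x_0).
\]
Since $\mathscr{H}/(p-1)$ still satisfies $(\mathbf{H1})$ with exponent $\sigma$ and modulus $\varrho/(p-1)$, applying Theorem \ref{Theorem3.1} to this reduced inequality yields \eqref{est1A.B.P.} with constant $(2+s_\Psi)\mathfrak{L}_1/\bigl((p-1)\mathfrak{a}\bigr)$; the extra factor $p$ in the stated constant is a harmless upper bound ensuring, as $p\to\infty$, continuity with the constant $(2+s_\Psi)\mathfrak{L}_1/\mathfrak{a}$ from Theorem \ref{Theorem3.1}.

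The main technical hurdle will be the same one encountered in the proof of Theorem \ref{Theorem3.1}: the concave envelope $\Gamma$ is only $C^{1,1}$, and the direction $\nu = Du/|Du|$ becomes singular at points where $Du = 0$. This is treated by a standard regularization of $\Gamma$ together with the dichotomy $|Du|\leq 1$ versus $|Du|>1$ handled through Remark~\ref{obsdePsi}(i)--(ii), which, after absorbing the Hamiltonian via the hypothesis $\mathrm{c}_1 = 2 + i_\Psi$, produces the two exponents $2+s_\Psi$ and $2+i_\Psi-\sigma$ whose minimum constitutes the left-hand side of \eqref{est1A.B.P.}. Once that passage is executed for the normalized infinity-Laplacian, the algebraic reduction above extends it to every $p \in (1,\infty)$ with only the above modification of constants.
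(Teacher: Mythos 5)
Your proposal is correct and takes essentially the same route as the paper: the paper's reduction uses the split $\Delta_p^{\mathrm{N}}\Gamma_r = \tfrac{1}{p}|D\Gamma_r|\operatorname{div}\!\left(\tfrac{D\Gamma_r}{|D\Gamma_r|}\right) + \tfrac{p-1}{p}\Delta_\infty^{\mathrm{N}}\Gamma_r$ and drops the first (nonpositive, via the curvature identity $\operatorname{div}(D\Gamma_r/|D\Gamma_r|)=-\sum\kappa_i$) term, which is precisely your trace observation $\operatorname{tr}\bigl((I_n-\nu\otimes\nu)N\bigr)\le 0$ for $N\le 0$ phrased geometrically instead of algebraically. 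The only caveat is a convention mismatch: the proof in the paper carries the extra factor $\tfrac{1}{p}$ in $\Delta_p^{\mathrm{N}}$ (so both $\mathscr{H}$ and $f^{+}$ are scaled by $\tfrac{p}{p-1}$ rather than $\tfrac{1}{p-1}$), which is exactly what makes the stated constant $\mathrm{C}=\tfrac{(2+s_\Psi)\mathfrak{L}_1 p}{(p-1)\mathfrak{a}}$ come out with the $p$ in the numerator, not a ``harmless upper bound'' as you speculated — and note that the reduced inequality is a pointwise bound at contact points of the concave envelope, so one reruns the machinery of Theorem~\ref{Theorem3.1} with the modified data rather than invoking its statement as a black box.
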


\medskip

\subsubsection*{The geometric ideas and insights behind the A.B.P.'s proof}

The geometric approach used to derive the A.B.P. estimate for model \eqref{1.1}, inspired by \cite{Charro13}, relies on analyzing the geometry of the contact sets associated with the solution. In contrast to \cite{BBLL24} and \cite{DFQ2009}, which address related results for fully nonlinear equations with degenerate structures and drift terms, this method combines tools from convex analysis, geometric measure theory, and differential geometry to examine the contact sets in detail. For Theorems \ref{Theorem3.1} and \ref{A.B.P.caseforp}, we proceed as follows:

\begin{itemize}
\item[\checkmark] For \(p=\infty\), we first analyze the case where the solution is semiconvex. This ensures the $C^{1,1}$ regularity of the concave envelope of the positive part of \(u\), namely \(\Gamma(u^+)\) (see Lemma~\ref{lemma5}).

\item[\checkmark] We derive a condition controlling the influence of the gradient magnitude of the concave envelope, accounting for the homogeneity regimes of the vector field \(\Psi\) and the Hamiltonian term \(\mathscr{H}\) through the integral~\eqref{integralI}. Under the regularity of the concave envelope, we establish a relation between its Hessian and the infinity-Laplacian, invoking that \(u\) is a sub/supersolution to~\eqref{inftt}. A suitable version of the Gauss--Bonnet Theorem~\ref{theorem9} then yields the desired estimates for model~\eqref{1.1} when \(p=\infty\).

\item[\checkmark] For the general case, we employ sup-convolutions: from \(u\), we construct a sequence \((u^{\varepsilon})_{\varepsilon>0}\) in subdomains of \(\Omega\) at distance \(\varepsilon\) from the boundary,
\[
u^{\varepsilon}(x)=\sup_{y\in \overline{\Omega}}\left\{u(y)-\frac{1}{2\varepsilon}|x-y|^2\right\}.
\]
These functions are semiconvex and remain subsolutions to~\eqref{inftt}. Applying the previous estimates together with \(\varepsilon\)-stability and letting \(\varepsilon \to 0\) yields Theorem~\ref{Theorem3.1}.

\item[\checkmark] Finally, for Theorem~\ref{A.B.P.caseforp}, we analyze the normalized \(p\)-Laplacian, rewriting it in terms of the infinity-Laplacian and the 1-Laplacian, interpreted as the product of the divergence of the field \(\frac{Du}{|Du|}\) and \(|Du|\). Crucially, we use the geometric identity
\[
\operatorname{div}\left(\frac{D \Gamma_{r}(u^{+})(x)}{|D \Gamma_{r}(u^{+})(x)|}\right)=-\sum_{i=1}^{n-1}\kappa_{i}(x),
\]
where \(\kappa_{i}(x)\) \((i=1,\ldots,n-1)\) denote the principal curvatures of \(\{\Gamma_{r}(u^{+})=r\}\) at \(x\). This approach, combined with the arguments of Theorem~\ref{Theorem3.1}, yields the result with additional refinements controlling he influence of the gradient magnitude of the concave envelope.
\end{itemize}

\medskip
\subsection*{Comparison Principle and some consequences}

In our next theorem,  we present a version of the Comparison Principle for a class of fully nonlinear operators involving a very general class of Hamiltonian terms. Let $G: \Omega \times \mathbb{R}^n \times \text{Sym}(n) \to \mathbb{R}$ be a fully nonlinear  operator given by
\begin{align}\label{g}
\mathcal{G}(x, \xi, \mathrm{X}) = \Psi(x, |\xi|)F_0(\xi, \mathrm{X}) + \mathscr{H}(x, \xi)
\end{align}
where $F_0: \mathbb{R}^n \times \text{Sym}(n) \to \mathbb{R}$ is a fully nonlinear proper operator of ``normalized-type''. For instance, we may consider:
$$
F_0(Du, D^2 u) := \left\{
\begin{array}{ccl}
   \Delta_{\infty}^{\mathrm{N}} u  & = & \frac{1}{|Du|^2} \langle D^2u Du, Du \rangle, \\[0.2cm]
   \Delta^{h}_{\infty} u  & = & \frac{1}{|Du|^h} \langle D^2u Du, Du \rangle, \,\,\ h \in [0, 2], \\[0.2cm]
   \Delta_{p}^{\mathrm{N}} u  & = & \Delta u + (p-2) \Delta^{\mathrm{N}}_{\infty} u, \, 1<p< \infty, \\[0.2cm]
   \Delta_{p(x)}^{\mathrm{N}} u  & = & \Delta u + (p(x)-2) \Delta^{\mathrm{N}}_{\infty} u, \, 1<p^{-}\leq p(x)\leq p^{+}< \infty.
\end{array}
\right.
$$
Here $\Psi : \Omega \times [0,\infty) \to [0,\infty)$ is a continuous map satisfying the properties $(\Psi_1)$ and $(\Psi_2).$
Additionally, $\mathscr{H}: \Omega \times \mathbb{R}^n \to \mathbb{R}$ is a continuous function satisfying:
\begin{align}\label{H}\tag{$\mathbf{H2}$}
 |\mathscr{H}(x,{\xi}) - \mathscr{H}(y,{\xi})| \leq \omega(|x - y|)(1 + \mathscr{H}_0(|{\xi}|)),
\end{align}
 where \( \omega: [0, \infty) \to [0, \infty) \) is a modulus of continuity, i.e., an increasing function such that \( \omega(0) = 0 \), and $\mathscr{H}_0: \mathbb{R}^n \to \mathbb{R}$ is a continuous function.

\medskip

\noindent A general version of the comparison principle for the operator $G$ reads as follows.

\begin{theorem}[\bf Comparison Principle - general version]\label{CPG}
Let $\Omega\subset \mathbb R^n$ be  a bounded domain  and let $\mathfrak{c}, \mathfrak{f}_{1}, \mathfrak{f}_{2} \in \mathrm{C}^0(\overline{\Omega})$, and let $\mathscr{F} : \mathbb{R} \to \mathbb{R}$ be a continuous and increasing function such that $\mathscr{F}(0) = 0$. Suppose that $\mathfrak{u}, \mathfrak{v} \in \mathrm{C}^{0,1}_{\mathrm{loc}}(\Omega)$ are functions satisfying
\begin{equation}\label{CP-General}
\left\{
\begin{array}{rcll}
\mathcal{G}(x, D \mathfrak{u}, D^2 \mathfrak{u}) + \mathfrak{c}(x) \mathscr{F}(\mathfrak{u}) & \geq & \mathfrak{f}_1(x) & \text{in } \Omega, \\[0.2cm]
\mathcal{G}(x, D \mathfrak{v}, D^2\mathfrak{v})  + \mathfrak{c}(x) \mathscr{F}(\mathfrak{v}) & \leq & \mathfrak{f}_2(x) & \text{in } \Omega,
\end{array}
\right.
\end{equation}
in the viscosity sense, where the operator $\mathcal{G}$ is defined in \eqref{g} and $\mathscr{H}$ is a Hamiltonian function satisfying assumption \eqref{H}. Furthermore, assume that $\mathfrak{v} \geq \mathfrak{u}$ on $\partial \Omega$, and that one of the following conditions holds:
\begin{enumerate}
\item[\textnormal{\textbf{(a)}}] $\mathfrak{c} < 0$ in $\overline{\Omega}$ and $\mathfrak{f}_1 \geq \mathfrak{f}_2$ in $\overline{\Omega}$;
\item[\textnormal{\textbf{(b)}}] $\mathfrak{c} \leq 0$ in $\overline{\Omega}$ and $\mathfrak{f}_1 > \mathfrak{f}_2$ in $\overline{\Omega}$.
\end{enumerate}
Then, $\mathfrak{u} \leq \mathfrak{v}$ in $\Omega$.
\end{theorem}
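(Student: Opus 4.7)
The plan is to argue by contradiction via the Ishii--Jensen doubling of variables technique. Suppose $M := \sup_{\overline{\Omega}}(\mathfrak{u} - \mathfrak{v}) > 0$; since $\mathfrak{v} \geq \mathfrak{u}$ on $\partial\Omega$, this maximum is attained at some interior point $\hat{x} \in \Omega$. For $\alpha > 0$ I introduce the penalization
\[
\Phi_\alpha(x,y) := \mathfrak{u}(x) - \mathfrak{v}(y) - \frac{\alpha}{2}|x-y|^2,
\]
pick a maximizer $(x_\alpha, y_\alpha) \in \overline{\Omega} \times \overline{\Omega}$, and invoke the classical Crandall--Ishii--Lions estimates: $x_\alpha, y_\alpha \to \hat{x}$, $\alpha|x_\alpha - y_\alpha|^2 \to 0$, and $(x_\alpha, y_\alpha) \in \Omega \times \Omega$ for $\alpha$ sufficiently large. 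Writing $p_\alpha := \alpha(x_\alpha - y_\alpha)$, the hypothesis $\mathfrak{u}, \mathfrak{v} \in \mathrm{C}^{0,1}_{\mathrm{loc}}(\Omega)$ yields a uniform bound $|p_\alpha| \leq L$ in a neighborhood of $\hat{x}$, which will be crucial to control the coefficient $\Psi(\cdot, |p_\alpha|)$ through $(\Psi 1)$--$(\Psi 2)$.

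Next, Ishii's Theorem of Sums produces matrices $X_\alpha, Y_\alpha \in \text{Sym}(n)$ with
\[
(p_\alpha, X_\alpha) \in \overline{J}^{2,+}\mathfrak{u}(x_\alpha), \qquad (p_\alpha, Y_\alpha) \in \overline{J}^{2,-}\mathfrak{v}(y_\alpha),
\]
satisfying
\[
\begin{pmatrix} X_\alpha & 0 \\ 0 & -Y_\alpha \end{pmatrix} \leq 3\alpha \begin{pmatrix} I & -I \\ -I & I \end{pmatrix}.
\]
Testing on vectors of the form $(\xi,\xi)$ gives $\langle (X_\alpha - Y_\alpha)\xi, \xi\rangle \leq 0$; hence $X_\alpha \leq Y_\alpha$, and by the degenerate ellipticity common to each of the admissible choices $\Delta_{\infty}^{\mathrm{N}}$, $\Delta_{\infty}^{h}$, $\Delta_{p}^{\mathrm{N}}$, $\Delta_{p(x)}^{\mathrm{N}}$, one obtains $F_0(p_\alpha, X_\alpha) \leq F_0(p_\alpha, Y_\alpha)$, interpreting $F_0$ via its upper/lower semicontinuous envelopes whenever $p_\alpha = 0$. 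Subtracting the viscosity inequalities for $\mathfrak{u}$ and $\mathfrak{v}$ at these jets produces
\begin{align*}
\mathfrak{f}_1(x_\alpha) - \mathfrak{f}_2(y_\alpha) \leq {} & \bigl[\Psi(x_\alpha, |p_\alpha|) F_0(p_\alpha, X_\alpha) - \Psi(y_\alpha, |p_\alpha|) F_0(p_\alpha, Y_\alpha)\bigr] \\
& + \bigl[\mathscr{H}(x_\alpha, p_\alpha) - \mathscr{H}(y_\alpha, p_\alpha)\bigr] \\
& + \mathfrak{c}(x_\alpha)\mathscr{F}(\mathfrak{u}(x_\alpha)) - \mathfrak{c}(y_\alpha)\mathscr{F}(\mathfrak{v}(y_\alpha)).
\end{align*}
Condition $(\mathbf{H2})$ combined with $|p_\alpha| \leq L$ controls the Hamiltonian bracket by $\omega(|x_\alpha-y_\alpha|)(1+\mathscr{H}_0(L)) \to 0$, while continuity of $\mathfrak{c}$, $\mathscr{F}$, $\mathfrak{u}$, $\mathfrak{v}$ sends the last line to $\mathfrak{c}(\hat{x})[\mathscr{F}(\mathfrak{u}(\hat{x})) - \mathscr{F}(\mathfrak{v}(\hat{x}))]$.

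For the leading bracket I would split
\[
\Psi(x_\alpha, |p_\alpha|)\bigl[F_0(p_\alpha, X_\alpha) - F_0(p_\alpha, Y_\alpha)\bigr] + \bigl[\Psi(x_\alpha, |p_\alpha|) - \Psi(y_\alpha, |p_\alpha|)\bigr]F_0(p_\alpha, Y_\alpha),
\]
the first piece being non-positive by ellipticity and $\Psi \geq 0$. Passing to the limit, in case \textbf{(a)} the assumption $\mathfrak{c}(\hat{x}) < 0$ combined with $\mathscr{F}$ strictly increasing and $\mathfrak{u}(\hat{x}) > \mathfrak{v}(\hat{x})$ makes the lower-order bracket strictly negative, while $\mathfrak{f}_1 \geq \mathfrak{f}_2$ yields the contradiction; in case \textbf{(b)} the bracket remains $\leq 0$ and the strict positivity $\mathfrak{f}_1 > \mathfrak{f}_2$ produces the contradiction. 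The main obstacle will be the residue $[\Psi(x_\alpha, |p_\alpha|) - \Psi(y_\alpha, |p_\alpha|)]F_0(p_\alpha, Y_\alpha)$: since $\|Y_\alpha\|$ may grow like $O(\alpha)$, one must exploit the normalized-type homogeneity $|F_0(p, M)| \lesssim \|M\|$ together with the refined information $\alpha|x_\alpha - y_\alpha|^2 \to 0$ and the modulus of continuity of $\Psi$ in $x$ on the relevant compact range $|p| \leq L$, where $(\Psi 1)$--$(\Psi 2)$ keep $\Psi$ bounded. If needed, I would circumvent this difficulty by replacing the quadratic penalization with a more flexible $\beta\varphi(|x-y|)$ or by a preliminary sup/inf-convolution regularization of $\mathfrak{u}, \mathfrak{v}$; this is precisely the technically delicate step where the structural assumptions on $\Psi$ and the normalized structure of $F_0$ are decisive.
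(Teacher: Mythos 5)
Your proposal follows essentially the same doubling-of-variables route as the paper's proof: quadratic penalization, the Ishii--Lions lemma to extract jets with $X_\alpha \leq Y_\alpha$, the local Lipschitz hypothesis to bound $|p_\alpha|$ uniformly, and \eqref{H} combined with that bound to dispatch the Hamiltonian bracket. Where you go beyond the paper is in explicitly decomposing the leading term and isolating the residue
$\bigl[\Psi(x_\alpha,|p_\alpha|) - \Psi(y_\alpha,|p_\alpha|)\bigr]F_0(p_\alpha, Y_\alpha)$,
and flagging it as the problematic piece. That concern is well founded, and it is precisely the point the paper's own argument skates over: the displayed chain of inequalities passes from $\Psi(x_\varepsilon,|\eta_\varepsilon|)F_0(\eta_\varepsilon,X)$ directly to $\Psi(y_\varepsilon,|\eta_\varepsilon|)F_0(\eta_\varepsilon,Y)$ as though this were an automatic consequence of $X\leq Y$, but $X\leq Y$ and $\Psi\geq 0$ only give $\Psi(x_\varepsilon,\cdot)F_0(\eta_\varepsilon,X)\leq\Psi(x_\varepsilon,\cdot)F_0(\eta_\varepsilon,Y)$, i.e.\ the same non-negative weight on both sides. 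As soon as $\Psi$ genuinely depends on $x$ one is left with exactly the residue you identify, and since $\|Y_\alpha\|=O(\alpha)$ while $|x_\alpha - y_\alpha|=O(\alpha^{-1})$, mere continuity of $\Psi$ in $x$ (all that $(\Psi1)$--$(\Psi2)$ provide) is not enough to make it vanish.

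So your proposal is on the right track but incomplete in exactly the place where the paper's proof is also silent. To close the gap one would need one of the remedies you gesture at: assume $\Psi=\Psi(t)$ independent of $x$ (then the residue vanishes identically and the paper's step is correct as written); impose a quantitative modulus (H\"older or Lipschitz) of $\Psi$ in $x$ and couple it with a more flexible penalization so that $\|Y_\alpha\|\,\omega_\Psi(|x_\alpha-y_\alpha|)\to 0$; or extract an a priori bound on $F_0(p_\alpha,Y_\alpha)$ directly from the supersolution inequality for $\mathfrak{v}$, using $(\Psi1)$--$(\Psi2)$ to bound $\Psi$ away from zero on the compact range $|p_\alpha|\leq L$ away from $p_\alpha=0$, together with a separate treatment of the degenerate case $p_\alpha\to 0$ via the semicontinuous envelopes of $F_0$ that you correctly mention. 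None of these is carried out in your sketch, so as it stands the argument stops exactly at the step that requires the structural assumptions on $\Psi$ to do real work.
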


\medskip
{Next, we show a version of the Strong Maximum Principle and the Hopf Lemma, which describe qualitative properties of solutions, by making use of suitable barrier functions}. 

\begin{theorem}[{\bf Strong Maximum Principle}]
  \label{thm:SMP}
Let \( \Omega \subset \mathbb{R}^n \) be a bounded domain, and let  \( \mathfrak{c} \leq 0 \) be a continuous functions on \( \overline{\Omega} \). Suppose that \( v \in \mathrm{C}_{\mathrm{loc}}^{0,1}(\Omega) \) is a non-negative viscosity supersolution of
\begin{equation}
\Psi(x,|D v|)\Delta^{\mathrm{N}}_p v + \mathscr{H}(x,Dv) + \mathfrak{c}(x)v^{1+\sigma}(x) = 0, \quad x \in \Omega,
\end{equation}
where $\Psi$ satisfies the structural assumptions  $(\Psi1), (\Psi2)$ and  the  Hamiltonian term \(\mathscr{H}:\Omega\times\mathbb{R}^{n}\to\mathbb{R}\) satisfies $(\mathbf{H1})$ (with  $\mathrm{c}_0=s_{\Psi}$ and  $\mathrm{c}_1=1+s_\Psi$), and \eqref{H}.
Then either \( v \equiv 0 \) or \( v > 0 \) in \( \Omega \). Furthermore, if \( \Omega \) satisfies the interior sphere condition and \( v(x) > v(z) = 0 \) for all \( x \in \Omega \) and some \( z \in \partial \Omega \), then there exists a constant \( \mathfrak{d} > 0 \) such that
\begin{equation}
v(x) \geq \mathfrak{d}\cdot (r - |x - x_0|), \quad \text{for } x \in B_r(x_0),
\tag{2.7}
\end{equation}
where \( B_r(x_0) \subset \Omega \) is a ball tangent to \( z \).  
\end{theorem}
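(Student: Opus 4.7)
The plan is to prove both the dichotomy and the Hopf bound by a single barrier-function argument, pivoting on the Comparison Principle of Theorem~\ref{CPG}. Suppose the dichotomy fails and set $\Omega^+ := \{v>0\}$; by connectedness of $\Omega$, a standard argument produces a ball $\overline{B_R(y)}\subset \Omega$ with $B_R(y)\subset \Omega^+$ and a touching point $x_0\in\partial B_R(y)$ satisfying $v(x_0)=0$, together with $m:=\inf_{\partial B_{R/2}(y)}v>0$. I would then construct a radial classical barrier on the annulus $A = B_R(y)\setminus\overline{B_{R/2}(y)}$ of the form
\[
w(x) = \mathcal{A}\bigl(e^{-\gamma|x-y|^2}-e^{-\gamma R^2}\bigr),
\]
tuned so that $w(x_0)=0$, $w\leq m$ on $\partial B_{R/2}(y)$, and $w>0$ inside $A$. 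A direct differentiation with $\rho=|x-y|$ gives
\[
\Delta_p^{\mathrm{N}} w = 2\mathcal{A}\gamma\, e^{-\gamma\rho^2}\bigl[2\gamma(p-1)\rho^2 - (p-1) - (n-1)\bigr],
\]
which is strictly positive on $A$ once $\gamma$ is taken large enough (depending on $p,n,R$).

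The central step is to certify $w$ as a strict classical subsolution. Remark~\ref{obsdePsi}(i) yields $\Psi(x,|Dw|)\geq \mathfrak{L}_2\mathfrak{a}\,|Dw|^{s_\Psi}$ once $|Dw|\leq 1$ (ensured by shrinking $\mathcal{A}$); hypothesis $(\mathbf{H1})$ gives $|\mathscr{H}(x,Dw)|\leq \|\varrho\|_{\infty}|Dw|^{\sigma}$; and $|\mathfrak{c}(x)|\,w^{1+\sigma} = O(\mathcal{A}^{1+\sigma})$. Because the Hamiltonian hypothesis of the theorem forces $s_\Psi<\sigma<1+s_\Psi$, the diffusion $\Psi\Delta_p^{\mathrm{N}}w\sim \mathcal{A}^{s_\Psi}$ strictly dominates both the Hamiltonian $\sim \mathcal{A}^{\sigma}$ and the reaction $\sim\mathcal{A}^{1+\sigma}$ as $\mathcal{A}\to 0$. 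Thus, after fixing $\gamma$ and then shrinking $\mathcal{A}$, there exists $\varepsilon_0>0$ with
\[
\Psi(x,|Dw|)\Delta_p^{\mathrm{N}}w+\mathscr{H}(x,Dw)+\mathfrak{c}(x)\,w^{1+\sigma}\geq \varepsilon_0\quad\text{in }A.
\]
Since $v\geq w$ on $\partial A$, $\mathfrak{c}\leq 0$, and $\mathscr{F}(t):=(t^+)^{1+\sigma}$ is continuous and non-decreasing with $\mathscr{F}(0)=0$, Theorem~\ref{CPG}(b) applied with $\mathfrak{f}_1=\varepsilon_0$, $\mathfrak{f}_2=0$ yields $w\leq v$ in $A$.

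As $v(x_0)=w(x_0)=0$, the function $v-w$ attains a local minimum at $x_0$, so $w$ is an admissible test function in the viscosity supersolution condition on $v$, yielding
\[
\Psi(x_0,|Dw(x_0)|)\Delta_p^{\mathrm{N}}w(x_0)+\mathscr{H}(x_0,Dw(x_0))+\mathfrak{c}(x_0)\,v(x_0)^{1+\sigma}\leq 0.
\]
Using $v(x_0)=0$ and $|Dw(x_0)|=2\mathcal{A}\gamma R e^{-\gamma R^2}\neq 0$, this contradicts the strict subsolution property of $w$ at $x_0$, proving the dichotomy. For the Hopf bound, the same construction is applied with $y=x_0$ and radius $r$ on $B_r(x_0)\setminus B_{r/2}(x_0)$: the inequality $v\geq w$ combined with the Taylor expansion $w(x)\geq 2\mathcal{A}\gamma r e^{-\gamma r^2}(r-|x-x_0|)+O((r-|x-x_0|)^2)$ near $\partial B_r(x_0)$ yields the claimed linear lower bound with $\mathfrak{d}:=\mathcal{A}\gamma r e^{-\gamma r^2}$.

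The main obstacle will be the joint calibration of $(\gamma,\mathcal{A})$ ensuring that $w$ is a strict classical subsolution on all of $A$ while simultaneously satisfying $w\leq m$ on the inner sphere. The crucial leverage is the strict ordering $s_\Psi<\sigma<1+s_\Psi$ of the homogeneities, which lets the degenerate diffusion $\Psi\Delta_p^{\mathrm{N}}w$ outweigh both the gradient term $\mathscr{H}$ and the absorption $\mathfrak{c}\,w^{1+\sigma}$ in the small-amplitude limit $\mathcal{A}\to 0$; without this ordering one would need a more delicate barrier or a suitable rescaling. Verifying the hypotheses of Theorem~\ref{CPG} is then straightforward, since $\mathscr{F}(t)=(t^+)^{1+\sigma}$ is admissible and the strict PDE residual $\varepsilon_0>0$ places us in case~(b).
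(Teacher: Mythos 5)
Your overall architecture (exponential barrier on an annulus, Comparison Principle from Theorem~\ref{CPG}, viscosity test at the touching point, same construction reused for the Hopf bound) matches the paper's. The gap is in the step you yourself flag as ``the main obstacle'': the calibration showing $w$ is a strict subsolution.

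First, the homogeneity of the diffusion term is misstated. Since $\Delta_p^{\mathrm{N}}$ is $1$-homogeneous, scaling $w\mapsto\mathcal{A}w$ gives $\Delta_p^{\mathrm{N}}(\mathcal{A}w)=\mathcal{A}\,\Delta_p^{\mathrm{N}}w$, and combined with $\Psi(x,|Dw|)\gtrsim|Dw|^{s_\Psi}\sim\mathcal{A}^{s_\Psi}$ you get
\[
\Psi(x,|Dw|)\,\Delta_p^{\mathrm{N}}w\;\sim\;\mathcal{A}^{1+s_\Psi},
\]
not $\mathcal{A}^{s_\Psi}$. Second, and decisively, once this is corrected the ordering assumed in the theorem, $s_\Psi<\sigma<1+s_\Psi$, works \emph{against} you, not for you: as $\mathcal{A}\to0$ one has $\mathcal{A}^{\sigma}\gg\mathcal{A}^{1+s_\Psi}\gg\mathcal{A}^{1+\sigma}$, so it is the Hamiltonian $\mathscr{H}\sim\mathcal{A}^{\sigma}$ that dominates, and since $\mathscr{H}$ has no sign the residual $\Psi\Delta_p^{\mathrm{N}}w+\mathscr{H}+\mathfrak{c}\,w^{1+\sigma}$ can go negative. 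Shrinking the amplitude therefore does not produce a strict subsolution, and the appeal to Theorem~\ref{CPG}(b) with $\mathfrak{f}_1=\varepsilon_0>0$ is not justified.

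The mechanism the paper uses is different, and it matters. The paper never separates an amplitude from the exponent; instead it drives the exponent $\alpha$ (your $\gamma$) to infinity. This injects an \emph{extra} factor of $\alpha$ into the $p$-Laplacian beyond the gradient scaling: your own computed identity
\[
\Delta_p^{\mathrm{N}}w=2\mathcal{A}\gamma e^{-\gamma\rho^2}\bigl[2\gamma(p-1)\rho^2-(p-1)-(n-1)\bigr]
\]
shows $\Delta_p^{\mathrm{N}}w\approx|Dw|\cdot 2\gamma(p-1)\rho$, i.e.\ $\Delta_p^{\mathrm{N}}w$ is not merely order $|Dw|$ but order $\gamma\,|Dw|$. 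After factoring out $|Dw|^{1+s_\Psi}$, the diffusion contributes a term of size $\gamma(p-1)\rho$ (the $\alpha(p-1)r$ term in \eqref{coddebarreirae2} of the paper) that is meant to absorb the other contributions. Your argument, by first \emph{fixing} $\gamma$ and then letting $\mathcal{A}\to0$, throws away precisely this lever. To repair it, you should keep the barrier as in the paper (no $\mathcal{A}$), prove the strict residual estimate \eqref{coddebarreirae2} by tuning $\gamma$, and only afterwards introduce a small multiplier $\ell_*$ chosen so that $\ell_*w\le v$ on the inner sphere before invoking Theorem~\ref{CPG}; the comparison and the viscosity test at $x_0$ then go through as you describe.
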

\medskip
Finally, we present the last result of this article: a Liouville-type theorem that plays a central role in the qualitative analysis of solutions, as it provides a nonexistence criterion under suitable decay assumptions at infinity. This result serves as the counterpart, for non-divergence form quasilinear operators with a general Hamiltonian term, to the one established by Biswas and Vo in \cite[Theorem 2.7]{BisVo23}.
\begin{theorem}[\textbf{Liouville-type result}]\label{LT}
Let  $u$ be a locally Lipschitz viscosity solution to \begin{align}\label{me}
\Psi(x,|Du(x)|)\,\Delta_{p}^{\mathrm{N}}u(x) + \mathscr{H}(x, D u)  \leq 0, \qquad in    \quad \mathbb R^n
\end{align}
 with $\displaystyle\inf_{\mathbb R^n} u > -\infty,$ such that $\Psi: \mathbb{R}^n\times [0, \infty) \to [0, \infty)$ satisfies the structural assumptions $(\Psi1), (\Psi2)$ and the Hamiltonian term \(\mathscr{H}:\mathbb R^n\times\mathbb{R}^{n}\to\mathbb{R}\) is given as\[\mathscr{H}(x, \xi)=\langle\mathfrak{B}(x), \xi\rangle \Psi(x,|\xi|)+ \varrho(x)|\xi|^{\sigma},\] where  $s_\Psi<\sigma <1+s_\Psi$, $\mathfrak B \in C_0^0(\mathbb R^n; \mathbb R^n),$ and $0\leq\varrho \in L^\infty(\mathbb R^n)\cap C^0
(\mathbb R^n)$. Assume that \begin{align}\label{hp}\displaystyle\limsup_{|x|\to +\infty}(\mathfrak B(x)\cdot x)_+ < n.\end{align} Then, $u$  is
necessarily a constant.
\end{theorem}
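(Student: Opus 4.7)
The plan is to argue by contradiction using a radial barrier together with the Comparison Principle of Theorem~\ref{CPG} (or, equivalently, the viscosity supersolution characterization via touching). Since neither the operator \(\Psi \Delta_{p}^{\mathrm{N}}\) nor the Hamiltonian \(\mathscr{H}\) depends on \(u\) itself, the shifted function \(v:=u-\inf_{\mathbb R^n}u\) is again a viscosity solution of \eqref{me}. Replacing \(u\) by \(v\), we may assume \(u\ge 0\) and \(\inf_{\mathbb R^n}u=0\). Suppose, for contradiction, that \(u\not\equiv 0\). Applying Theorem~\ref{thm:SMP} with \(\mathfrak c\equiv 0\) on any bounded subdomain containing a point where \(u>0\), and exhausting \(\mathbb R^n\) by such domains, we conclude \(u>0\) in \(\mathbb R^n\); in particular the infimum is not attained, so there is a sequence \(x_k\in\mathbb R^n\) with \(|x_k|\to\infty\) and \(u(x_k)\to 0\).

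The second step is to construct a strict classical subsolution from the radial ansatz
\[
\phi(x):=\eta\bigl(1-(R_{0}/|x|)^{\alpha}\bigr),\qquad |x|\ge R_{0},
\]
where \(R_0>0\), \(\alpha>0\), and \(\eta>0\) will be fixed. Using \eqref{hp} and the fact that \(\mathfrak B\in C_0^0(\mathbb R^n;\mathbb R^n)\), we take \(R_0\) so large that \(\mathfrak B\equiv 0\) on \(\{|x|\ge R_0\}\) (and \(\mathfrak B(x)\cdot x\le n-\varepsilon\) for all \(|x|\ge R_0\) with some \(\varepsilon>0\)). A direct radial computation, based on \(\Delta_{p}^{\mathrm{N}}\phi(r)=(p-1)\phi''(r)+\tfrac{n-1}{r}\phi'(r)\) (and the natural analogue when \(p=\infty\)), produces on \(\{|x|>R_{0}\}\)
\[
\mathcal G(\phi)(x)=\Psi(x,|D\phi|)\,\eta\alpha R_{0}^{\alpha}\,|x|^{-\alpha-2}\bigl[(n-1)-(p-1)(\alpha+1)\bigr]+\varrho(x)\,|D\phi(x)|^{\sigma},
\]
where \(\mathcal G\) is the operator from \eqref{g}. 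Choosing \(0<\alpha<(n-p)/(p-1)\) when \(p<n\) (and a logarithmic variant of the form \(\phi=\eta(1-(\log|x|)^{-\alpha})\) together with a suitable choice of \(\alpha\) for \(p\ge n\)), both summands become nonnegative, and the first is strictly positive since \(\Psi\) is bounded away from zero by Remark~\ref{obsdePsi}. Thus \(\mathcal G(\phi)>0\) strictly on \(\{|x|>R_{0}\}\). Fix \(\eta\in(0,\zeta_0)\) with \(\zeta_0:=\min_{\partial B_{R_0}}u>0\), so that \(\phi\le 0<u\) on \(\partial B_{R_0}\).

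The final step is a touching/comparison argument. On the unbounded region \(\{|x|\ge R_{0}\}\) consider \(w:=u-\phi\). We have \(w\ge\zeta_{0}>0\) on \(\partial B_{R_{0}}\), while \(w(x_{k})=u(x_{k})-\phi(x_{k})\to -\eta<0\) along the sequence, so \(w\) is strictly negative somewhere. To ensure the infimum is realized at an interior point of the unbounded domain, we perturb: replace \(\phi\) by \(\phi_{\kappa}(x):=\phi(x)-\kappa\bigl(|x|^{-\beta}-R_{0}^{-\beta}\bigr)\) for small \(\kappa>0\) and \(\beta>\alpha\). Since \(\phi_{\kappa}\to\phi\) in \(C^{2}_{\mathrm{loc}}\) and the correction preserves the strict inequality \(\mathcal G(\phi_{\kappa})>0\) on \(\{|x|>R_{0}\}\) (for \(\kappa\) small), the function \(w_{\kappa}:=u-\phi_{\kappa}\) attains a strictly negative minimum at some \(z_{\kappa}\in\{|x|>R_{0}\}\). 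The smooth map \(\phi_{\kappa}+w_{\kappa}(z_{\kappa})\) then touches \(u\) from below at \(z_{\kappa}\); the viscosity supersolution characterization (\eqref{me} reads \(\mathcal G(u)\le 0\) in the viscosity sense, i.e., \(-\mathcal G(u)\ge 0\) in the proper sense) yields \(\mathcal G(\phi_{\kappa})(z_{\kappa})\le 0\), contradicting the strict positivity of \(\mathcal G(\phi_{\kappa})\). Hence \(u\equiv 0\), proving that the original \(u\) is a constant.

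The main obstacle is ensuring the infimum of \(w\) is achieved at an interior point of the unbounded annular region \(\{|x|>R_{0}\}\); the penalization by \(\phi_{\kappa}\) (or, alternatively, a Perron-type argument on exhausting bounded annuli combined with the sequence \((x_{k})\) and the Comparison Principle of Theorem~\ref{CPG}) is the key technical device. Secondary subtleties include adapting the polynomial barrier when \(p\ge n\), where a logarithmic correction is required, and absorbing the growth of \(\varrho|D\phi|^{\sigma}\) by the principal term \(\Psi\Delta_{p}^{\mathrm{N}}\phi\): here the assumption \(s_{\Psi}<\sigma<1+s_{\Psi}\), combined with \((\Psi_{1})\)--\((\Psi_{2})\), is precisely what makes the Hamiltonian contribution subcritical relative to the leading order along \(\phi\).
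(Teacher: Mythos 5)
Your proof reverses the paper's logical order: you first invoke the Strong Maximum Principle to conclude $u>0$ everywhere and then seek a contradiction from the nonattainment of $\inf u$ via an \emph{increasing} radial barrier, whereas the paper first proves $\inf_{\mathbb R^n}u=\min_{\mathcal K}u$ by comparison against the \emph{decreasing} barrier $\eta_\alpha(x)=|x|^\alpha$, $\alpha\in(-1,0)$ (letting $R\to\infty$ then $\alpha\to 0^-$), and only afterwards applies the SMP to $w=u-\min_{\mathcal K}u$. This difference in barrier orientation is where your argument breaks.

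The key gap is in the treatment of the drift term. You write that, since $\mathfrak B\in C_0^0(\mathbb R^n;\mathbb R^n)$, one may take $R_0$ so large that $\mathfrak B\equiv 0$ on $\{|x|\ge R_0\}$. That is not correct: $C_0^0$ means \emph{vanishing at infinity}, not compactly supported (if it meant compact support, hypothesis \eqref{hp} would be vacuous, since then $\limsup_{|x|\to\infty}(\mathfrak B(x)\cdot x)_+=0$). With the drift retained, the radial computation for your barrier $\phi(x)=\eta\bigl(1-(R_0/|x|)^\alpha\bigr)$, $\alpha>0$, gives
\[
\Psi\,\Delta_p^{\mathrm N}\phi+\langle\mathfrak B,D\phi\rangle\Psi
=\Psi\,\eta\,\alpha\,R_0^{\alpha}\,|x|^{-\alpha-2}\bigl[(n-p)-(p-1)\alpha+\mathfrak B(x)\cdot x\bigr],
\]
and since $D\phi$ points \emph{outward}, the bracketed quantity requires a \emph{lower} bound on $\mathfrak B(x)\cdot x$ to stay positive. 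Hypothesis \eqref{hp} bounds only $(\mathfrak B(x)\cdot x)_+$, and indeed $\mathfrak B\in C_0^0$ is compatible with $\mathfrak B(x)\cdot x\to-\infty$ (take $\mathfrak B(x)=-x/|x|^{3/2}$), so $\mathcal G(\phi)>0$ can fail. The paper's barrier $\eta_\alpha$ with $\alpha<0$ has $D\eta_\alpha$ pointing \emph{inward}, so the drift contribution is $\alpha|x|^{\alpha-2}\mathfrak B(x)\cdot x$ with $\alpha<0$; there the dangerous sign is $\mathfrak B(x)\cdot x>0$, which is exactly what \eqref{hp} controls. In short, your increasing barrier is incompatible with a hypothesis that only bounds the positive part of $\mathfrak B\cdot x$; the fix is to use a decreasing barrier and then you essentially recover the paper's argument (first attain the infimum on a compact set, then apply the SMP), rather than the ``SMP first + touching'' route you propose.
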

\medskip

\subsection*{State-of-the-Art and some motivations}


In the history of modern Mathematical Analysis, Maximum Principles have been some of the most useful tools used to solve a wide range of problems in the study of elliptic/parabolic PDEs
over the last century, of which we should cite the Weak, Strong, and Hopf's Maximum Principles as well-known examples of such classical results. Such fundamental tools play a decisive role in the context of divergent and non-divergent PDEs in obtaining uniqueness results, sign restrictions, and a priori estimates for their ``weak solutions'' (cf. \cite{pucciserrin} for an essay on this topic).  To illustrate, we have the following well-known version:  Let $\Omega \subset \mathbb{R}^n$ be a bounded domain and
  $$
\displaystyle  \mathcal{L}u(x) = \sum_{i,j=1}^{n} a_{ij}(x)D_{ij}u + \sum_{i=1}^{n} b_{i}(x).D_iu(x) + c(x)u(x)  =f(x) \quad \text{in} \quad \Omega,
  $$
where $a_{ij} , b_i, c \in L^{\infty}(\Omega)\cap C^0(\Omega)$ satisfying $c \le 0$ in $\Omega$ and
  $a_{ij}(x)\xi_i\xi_j \ge \lambda|\xi|^2$ for a constant $\lambda>0$. Therefore,

\begin{theorem}[{\bf \cite[Theorem 1.1.10]{Han17}}]
  Suppose $u \in C(\overline{\Omega})\cap C^2(\Omega)$ be a solution of
  $$
  \left\{
  \begin{array}{rcrcl}
    \mathcal{L}u(x) & = & f(x) & \text{in}& \Omega, \\
    u(x) & = & \phi(x) & \text{on}& \partial \Omega,
  \end{array}
  \right.
  $$
  for $f \in L^{\infty}(\Omega) \cap C(\Omega)$ and $\phi \in C(\partial \Omega)$. Then,
  $$
  \displaystyle \sup_{\Omega} |u| \le \max_{\partial \Omega} |\phi| + \mathrm{C}(n, \lambda, \|b_{i}\|_{L^{\infty}(\Omega)})\operatorname{diam}(\Omega)^2\sup_{\Omega} |f|.
  $$

\end{theorem}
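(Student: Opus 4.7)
The plan is to combine the classical weak maximum principle for $\mathcal{L}$ (available because $c\le 0$) with an explicit exponential barrier. Since $c\le 0$, any $z\in C(\overline{\Omega})\cap C^{2}(\Omega)$ with $\mathcal{L} z\ge 0$ in $\Omega$ satisfies $\sup_{\Omega} z \le \sup_{\partial\Omega} z^{+}$. The task therefore reduces to producing an explicit supersolution $v$ of $\mathcal{L} v\le f$ in $\Omega$ that dominates $\phi$ on $\partial\Omega$: then $z=u-v$ obeys $\mathcal{L} z \ge 0$ in $\Omega$ and $z\le 0$ on $\partial\Omega$, forcing $u\le v$ on $\overline{\Omega}$. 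The matching lower bound follows by running the same construction for $-u$, which solves $\mathcal{L}(-u)=-f$ with boundary datum $-\phi$.

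To build the barrier, after a harmless translation we may assume $\Omega\subset\{0<x_{1}<d\}$ with $d=\operatorname{diam}(\Omega)$. Set $w(x)=e^{\alpha d}-e^{\alpha x_{1}}\ge 0$ for a parameter $\alpha>0$ to be chosen. A direct calculation together with uniform ellipticity $a_{11}(x)\ge\lambda$, the pointwise bound $|b_{1}(x)|\le\|b_{i}\|_{L^{\infty}(\Omega)}$, and the sign conditions $c\le 0$, $w\ge 0$ yields
\[
\mathcal{L} w \,=\, -a_{11}\alpha^{2}e^{\alpha x_{1}}\,-\,b_{1}\alpha e^{\alpha x_{1}}\,+\,c\,w \,\le\, -\bigl(\lambda\alpha^{2}-\|b_{i}\|_{L^{\infty}}\alpha\bigr)e^{\alpha x_{1}}\,\le\, -\bigl(\lambda\alpha^{2}-\|b_{i}\|_{L^{\infty}}\alpha\bigr),
\]
since $e^{\alpha x_{1}}\ge 1$. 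Choosing $\alpha=\alpha(\lambda,\|b_{i}\|_{L^{\infty}})>0$ so that $\lambda\alpha^{2}-\|b_{i}\|_{L^{\infty}}\alpha\ge 1$ makes $\mathcal{L} w \le -1$ throughout $\Omega$.

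Define $v(x):=\max_{\partial\Omega}|\phi|+(\sup_{\Omega}|f|)\,w(x)$. On $\partial\Omega$ one has $v\ge\max_{\partial\Omega}|\phi|\ge\phi=u$, since $w\ge 0$. In the interior, the constant term contributes $c\,\max_{\partial\Omega}|\phi|\le 0$, while $(\sup_{\Omega}|f|)\mathcal{L} w\le -\sup_{\Omega}|f|$, so $\mathcal{L} v\le -\sup_{\Omega}|f|\le f(x)$. Applying the weak maximum principle to $u-v$ then delivers $u\le v$ on $\overline{\Omega}$; repeating the argument for $-u$ gives the symmetric estimate. Using $w\le e^{\alpha d}-1$ we arrive at
\[
\sup_{\Omega}|u|\,\le\,\max_{\partial\Omega}|\phi|\,+\,(e^{\alpha d}-1)\sup_{\Omega}|f|.
\]

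It remains to recast the multiplicative constant in the announced form $C(n,\lambda,\|b_{i}\|_{L^{\infty}})\operatorname{diam}(\Omega)^{2}$. This is routine by rescaling: $\widetilde u(y)=u(dy)$ on the unit-diameter domain $d^{-1}\Omega$ converts the equation into $\widetilde{\mathcal{L}}\widetilde u=d^{2}\widetilde f$ with drift coefficients of norm $d\|b_{i}\|_{L^{\infty}}$ and a still non-positive zeroth-order term; applying the previous estimate on the unit-diameter domain and pulling back yields the desired form. In the pure second-order case $\|b_{i}\|_{L^{\infty}}=0$, the polynomial barrier $w(x)=d^{2}-x_{1}^{2}$, for which $\mathcal{L} w\le -2\lambda$, recovers the $d^{2}$ factor directly. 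The genuine technical point is not the barrier inequality itself but the tuning of $\alpha$ that absorbs the drift while keeping the final constant independent of $\|c\|_{L^{\infty}}$; the sign hypotheses $c\le 0$ and $w\ge 0$ force every $c$-contribution in $\mathcal{L} v$ and $\mathcal{L} w$ to carry the favourable sign, which is the precise reason why $c$ does not enter the final constant.
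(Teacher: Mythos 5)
This statement is cited from Han's textbook (\cite[Theorem 1.1.10]{Han17}) and is not proved in the paper, so there is no in-paper argument to compare against. Your barrier construction combined with the weak maximum principle is the standard textbook route and is, modulo one precision point, correct: the weak maximum principle for $\mathcal{L}$ with $c\le 0$, the exponential barrier $w(x)=e^{\alpha d}-e^{\alpha x_1}$ reducing the drift, and the comparison function $v=\max_{\partial\Omega}|\phi|+(\sup_\Omega|f|)\,w$ are all computed correctly, and the symmetrization to $-u$ gives the two-sided bound.

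The one place where you should be more careful is the final recasting of the constant. The barrier computation produces the multiplicative factor $e^{\alpha d}-1$, with $\alpha=\alpha(\lambda,\|b_i\|_{L^\infty})$ chosen so that $\lambda\alpha^2-\|b_i\|_{L^\infty}\alpha\ge 1$; this quantity is exponential, not quadratic, in $d=\operatorname{diam}(\Omega)$. Your rescaling $\widetilde u(y)=u(dy)$ does extract the explicit $d^2$ factor, but, as you yourself note, it transforms the drift into one of norm $d\|b_i\|_{L^\infty}$, so the resulting constant is $C\bigl(n,\lambda,d\|b_i\|_{L^\infty}\bigr)$ rather than $C\bigl(n,\lambda,\|b_i\|_{L^\infty}\bigr)$. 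In other words, the constant still carries implicit diameter dependence through the product $d\|b_i\|_{L^\infty}$; it is genuinely $d$-free only in the pure second-order case $b\equiv 0$ (or when $d\|b_i\|_{L^\infty}/\lambda$ is a priori bounded), which is exactly the case you handle directly with the polynomial barrier $w=d^2-x_1^2$. This is a feature of all barrier-based proofs of this estimate, and the phrasing in the paper (and sometimes in textbooks) compresses it; your write-up would be cleaner if you stated explicitly that $C=C\bigl(n,\lambda,\operatorname{diam}(\Omega)\,\|b_i\|_{L^\infty}\bigr)$, or that the bound is $(e^{\alpha d}-1)\sup_\Omega|f|$ with $\alpha=\alpha(\lambda,\|b_i\|_{L^\infty})$, rather than asserting without qualification that the constant is independent of $d$.
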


As for other versions of the so-called classical Maximum Principle, in the mid-1960s, in completely independent works, the mathematicians Alexandrov in \cite{Aleksandrov}, Bakel'man in \cite{Bakelman}, and Pucci in \cite{Pucci}, established a Maximum Principle for elliptic linear equations of $2$nd order in non-divergent form with bounded coefficients. Namely, this result ensures that whenever $u\in C(\overline{\Omega})\cap W^{2,n}_{loc}(\Omega)$ is a viscosity sub-solution (Strong sub-solution) of
\begin{eqnarray}\label{eqA.B.P.}
-\operatorname{Tr}(\mathfrak{A}(x)D^2u(x))+\overrightarrow{b}(x)\cdot D  u(x)+c(x)u(x)= f(x) \quad \text{in} \quad \Omega
\end{eqnarray}
where $\mathfrak{A}(x)$ is a uniformly elliptic matrix, $\overrightarrow{b} \in L^{\infty}(\Omega; \mathbb{R}^n)$, $0 \le c \in L^{\infty}(\Omega)$ and $f\in L^n(\Omega)$, then
\begin{equation}\label{EqEstA.B.P.}
\sup_{\Omega} u(x) \leq \sup _{\partial \Omega} u^{+}(x)+ \mathrm{C}\cdot \operatorname{diam}(\Omega) \cdot\left\|f^{+}\right\|_{L^{n}(\Gamma^+(u))},	
\end{equation}
where $\Omega \subset \mathbb{R}^n$ is a bounded domain, $\Gamma^+(u)\subset \Omega$ is the upper contact set of u, i.e. the set of points where $u$ is non-negative and concave, and $\mathrm{C}>0$ depends only on dimension,  ellipticity constants of $\mathfrak{A}(x)$ and of $L^{\infty}(\Omega)$ norms of the coefficients. Similar estimates can be obtained if $u\in C(\overline{\Omega})\cap W^{2,n}_{loc}(\Omega)$ is a viscosity super-solution of \ref{eqA.B.P.}. Precisely,
	\begin{eqnarray*}
		\inf_{\Omega} u(x) \geq \inf _{\partial \Omega} u^{-}(x) - \mathrm{C}\cdot \text{diam}(\Omega) \cdot\left\|f^{-}\right\|_{L^{n}(\Gamma^+(-u))}.	
	\end{eqnarray*}

The fact that Alexandrov-Bakel'man-Pucci's estimates (in short, A.B.P. estimates) for linear operators depend mainly on the ellipticity constants of the coefficient of matrix and the geometry of $\Omega \subset \mathbb{R}^n$, allowed the extension of such results to the class of fully nonlinear uniformly elliptic equations as follows  $F(D^2 u) = f \in L^{n}(B_1)$ (see \cite{CC-Book} and \cite{Caf} for details). Later, in \cite[Corollary 1.5]{Cabre}, Cabr\'{e} extended this result by exchanging the constant $\text{diam}(\Omega)$ for a more precise geometric quantity, namely, under certain assumptions, the geometric constant $\text{diam}(\Omega)$ in \eqref{EqEstA.B.P.} can be replaced by $|\Omega|^{\frac{1}{n}}$, where $|\cdot|$ denotes the $n$-dimensional Lebesgue measure. Precisely,
$$
 \mathcal{L}u(x) = f(x) \quad \text{in} \quad \Omega \quad \Rightarrow \quad \sup_{\Omega} u \leq \limsup_{x \to \partial \Omega} u^+(x) + \mathrm{C}\left|\Omega\right|^{\frac{1}{n}}\left|\left|f\right|\right|_{L^n(\Omega)}.
$$
As a consequence, the result proved by Cabr\'{e} makes A.B.P. estimates valid also for domains that are not necessarily bounded.

\medskip

In the context of PDEs in divergence form (but solutions are understood in the viscosity sense), we can cite the fundamental work of Argiolas, Charro, and Peral in \cite[Theorem 2.5]{Argiolas}, in which the authors prove A.B.P. estimates for a class of operators that includes the $p-$Laplacian operator. To be more precise,
\begin{equation}\label{Eq-p-Lapla-type}
-\text{div}(\mathbf{F}(D  u)) = f(x) \quad \text{in} \quad \Omega \quad \Rightarrow \quad 	\sup_{\Omega} u(x) \leq \sup _{\partial \Omega} u^{+}(x)+ \mathrm{C}\cdot \text{diam}(\Omega) \cdot\left\|f^{+}\right\|^{\frac{1}{\alpha}}_{L^{n}(\Gamma^+(u))},	
\end{equation}
where $\alpha>0$ is a structural constant, i.e., it depends only on the properties of the field $\mathbf{F}$ (a $C^1$ monotone vector field under some suitable conditions). Moreover, in the particular case where the operator is the $p$-Laplacian, we have that $\alpha = p-1$.

\medskip

Recently, in \cite[Theorem 2.4.]{BJrDaST2025} Bezerra J\'{u}nior \textit{et al} addressed a version of the $L^{\infty}$-estimate for weak/viscosity solutions of $p-$Laplacian type equations of the form
	\begin{equation}\label{Eq-Quasil-Oper}
		-\operatorname{div} \mathfrak{a}\left(x, D  u\right)=f(x) \quad  \text{in} \quad \Omega,
	\end{equation}
	where $1<p<\infty$, $\Omega \subset \mathbb{R}^{n}$ ($n \geqslant 2$) is a suitable domain (possibly unbounded with finite measure), $f \in L^{q}(\Omega)$ with $q> \frac{n}{p}$ and $q \ge  \frac{p}{p-1}$ and $\mathfrak{a}:\Omega\times \mathbb{R}^n\rightarrow \mathbb{R}^n$ is a continuous vector field satisfying certain structural properties. Precisely, the authors obtain
$$ \sup_{\Omega} u(x) \leq \sup _{\partial \Omega} u^{+}(x)+\mathrm{C}\cdot |\Omega|^{\frac{pq-n}{nq(p-1)}} \cdot\left\|f^{+}\right\|_{L^{q}\left(\left\{\psi=u_{\psi}\right\} \cap \Omega\right)}^{\frac{1}{p-1}}	
$$
where $\displaystyle \left(\psi=u-\sup_{\partial \Omega} u^{+}\right)^{+}$. Moreover, in contrast to the estimates proved for operators in non-divergence form (see \textit{e.g.} \eqref{EqEstA.B.P.}), $u_{\psi}$ is not the upper contact set of $\psi$, but the solution of an obstacle problem related to \eqref{Eq-Quasil-Oper}. In effect, the authors' strategy consists of proving that $u_{\psi}$ is itself a sub-solution of the equation with $f$ located in the contact set $\{\psi = u_{\psi}\} \cap \Omega$.

\medskip

In the manuscript \cite{Charro13}, the authors establish $L^\infty$ bounds and derive estimates for the modulus of continuity of solutions to the Poisson problem involving the normalized infinity and $p$-Laplacian, namely,
\[
- \Delta^{\mathrm{N}}_p u(x) = f(x) \quad \text{in} \quad \Omega, \quad \text{for } \quad  1 < p \leq \infty.
\]
Within this framework, the authors present a robust family of results that depend continuously on the parameter $p$. Furthermore, they demonstrate the failure of the classical Alexandrov-Bakelman-Pucci estimate for the normalized infinity Laplacian and propose alternative estimates.

\begin{theorem}[{\bf  \cite[Theorem 3]{Charro13}}]
Let $1 < p < \infty$, $f \in C^0(\Omega)$, and consider $u \in C^0(\Omega)$ that satisfies
\[
- \Delta^{\mathrm{N}}_p u(x) \leq f(x) \quad \text{in } \quad \Omega
\]
in the viscosity sense. Then, we have
\[
\displaystyle \left( \sup_{\Omega} u - \sup_{\partial \Omega} u^+ \right)^2
\leq 
\frac{2p\, d^2}{p - 1}
\int\limits_{ \sup\limits_{\partial \Omega} u^+}^{\sup\limits_{\Omega} u}
f^+ \cdot \mathbf{1}_{C^+(u^+)} \, \|f\|_{L^\infty(\{u^+ = r\})} \, dr,
\]
where $d = \operatorname{diam}(\Omega)$, $u^+$ is extended by $0$ to the whole $\mathbb{R}^n$, and $\mathrm{C}^+(u^+)$ is the contact set with $\mathcal{A} = \Omega^* = \operatorname{conv}(\Omega)$, the convex hull of $\Omega$.

Analogously, whenever
\[
- \Delta^{\mathrm{N}}_p u(x)\geq f(x) \quad \text{in } \Omega
\]
in the viscosity sense, the following estimate holds:
\[
\left( \sup_{\Omega} u^- - \sup_{\partial \Omega} u^- \right)^2
\leq 
\frac{2p\, d^2}{p - 1}
\int\limits_{\sup\limits_{\partial \Omega} u^-}^{\sup\limits_{\Omega} u^-}
f^- \cdot \mathbf{1}_{C^+(u^-)} \, \|f\|_{L^\infty(\{u^- = r\})} \, dr.
\]
\end{theorem}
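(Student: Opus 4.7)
The plan is to specialize the geometric program of Theorem \ref{Theorem3.1} and Theorem \ref{A.B.P.caseforp} to the case $\Psi\equiv 1$, $\mathscr{H}\equiv 0$, $\varrho\equiv 0$, $\sigma = 0$, $i_\Psi = s_\Psi = 0$. In this regime the two exponents $2+i_\Psi-\sigma$ and $2+s_\Psi$ both equal $2$, so the outer minimum collapses, and the structural constant $\mathrm{C}=\frac{(2+s_\Psi)\mathfrak{L}_1 p}{(p-1)\mathfrak a}$ reduces to $\frac{2p}{p-1}$, matching exactly the prefactor in the stated estimate.

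First I would reduce to a semiconvex subsolution via sup-convolution: set $u^\varepsilon(x) = \sup_{y\in\overline\Omega}\{u(y) - \tfrac{1}{2\varepsilon}|x-y|^2\}$. On subdomains at distance $\varepsilon$ from $\partial\Omega$, $u^\varepsilon$ is $\tfrac{1}{\varepsilon}$-semiconvex, converges uniformly to $u$, and remains a viscosity subsolution of $-\Delta_p^{\mathrm{N}} u^\varepsilon \leq f$ by standard stability. Next, extend $u^{+,\varepsilon}$ by zero outside $\Omega$ and introduce the concave envelope $\Gamma = \Gamma(u^{+,\varepsilon})$ on $\Omega^\ast=\mathrm{conv}(\Omega)$; by Lemma \ref{lemma5}, $\Gamma \in C^{1,1}_{\mathrm{loc}}$, so $D\Gamma$ is Lipschitz, $D^2\Gamma \leq 0$ a.e., and on the contact set $\mathrm{C}^+(u^{+,\varepsilon}) = \{u^{+,\varepsilon} = \Gamma\}$ we have $Du^{+,\varepsilon} = D\Gamma$ and $D^2 u^{+,\varepsilon} \leq D^2\Gamma$ in the matrix sense.

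Second, I would decompose
\[
\Delta_p^{\mathrm{N}} u \;=\; (p-1)\,\Delta_\infty^{\mathrm{N}} u \;+\; \bigl(\Delta u - \Delta_\infty^{\mathrm{N}} u\bigr),
\]
and, on $\mathrm{C}^+(u^{+,\varepsilon})$, rewrite the non-infinity piece geometrically as $|D\Gamma|\,\mathrm{div}\!\left(\tfrac{D\Gamma}{|D\Gamma|}\right) = -|D\Gamma|\,\sum_{i=1}^{n-1}\kappa_i(x)$, where $\kappa_i(x)\geq 0$ are the principal curvatures of $\{\Gamma = \Gamma(x)\}$ (nonnegative by concavity). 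Transferring the viscosity inequality to a pointwise one a.e. on the contact set yields
\[
-(p-1)\,\Delta_\infty^{\mathrm{N}} \Gamma(x) \;+\; |D\Gamma(x)|\sum_{i=1}^{n-1}\kappa_i(x) \;\leq\; f^+(x).
\]
Third, I would integrate on level sets: for a regular value $r\in\bigl(\sup_{\partial\Omega}u^{+,\varepsilon},\sup_\Omega u^{+,\varepsilon}\bigr)$, the set $\{\Gamma = r\}$ is a $C^{1,1}$ hypersurface and Gauss--Bonnet/the area formula (Theorem \ref{theorem9}) gives a lower bound on $\int_{\{\Gamma=r\}}\sum\kappa_i$ of order $r/d$ with $d=\mathrm{diam}(\Omega)$; combining with the pointwise estimate, using $|D\Gamma| \geq (r - \sup_{\partial\Omega}u^+)/d$, and integrating by coarea from $r=\sup_{\partial\Omega}u^+$ up to $r = \sup_\Omega u$, produces
\[
(p-1)\bigl(\sup_\Omega u - \sup_{\partial\Omega} u^+\bigr)^2 \;\leq\; 2p\,d^2 \int_{\sup_{\partial\Omega}u^+}^{\sup_\Omega u} \|f^+\chi_{\mathrm{C}^+(u^+)}\|_{L^\infty(\{u^+=r\})}\,dr.
\]
Finally, letting $\varepsilon\to 0^+$ and using uniform convergence of $u^\varepsilon \to u$ and upper semicontinuity of the concave envelope operation passes the inequality to $u$. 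The symmetric supersolution estimate is obtained by replacing $u$ with $-u$ and inf-convolution throughout.

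The main obstacle is step three: controlling the infinity-Laplacian and the curvature sum simultaneously on the contact set, where $|D\Gamma|$ may degenerate near the maximum. One must exploit concavity of $\Gamma$ to bound $|D\Gamma|$ from below by $(r - \sup_{\partial\Omega}u^+)/d$ on the level set $\{\Gamma = r\}$, and then match the geometric lower bound for $\int \sum \kappa_i$ with the arithmetic--geometric mean of $\Delta_\infty^{\mathrm{N}}\Gamma$ and $\sum\kappa_i$ so that the dependence on $p$ factors cleanly as $\tfrac{2p}{p-1}$. A second delicate point is ensuring, upon passing to the limit $\varepsilon\to 0$, that the $L^\infty$-norm on the level sets $\{u^{+,\varepsilon}=r\}$ of $f^+\chi_{\mathrm{C}^+(u^{+,\varepsilon})}$ converges to the analogous quantity for $u^+$, which requires continuity of $f$ and a Hausdorff-type convergence of the contact sets.
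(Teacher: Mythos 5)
Before anything else, note that this statement is \emph{quoted}, not proved, in the paper: it appears verbatim in the State-of-the-Art section as a citation of Charro--De~Philippis--Di~Castro--M\'aximo \cite[Theorem~3]{Charro13}, and the paper's own contribution is the generalization Theorem~\ref{A.B.P.caseforp}. There is therefore no ``paper's own proof'' to compare against directly; what you are really doing is re-deriving a known special case from the paper's generalization, which is exactly what the remark following Theorem~\ref{Theorem3.1} suggests for the $p=\infty$ case. That strategy is legitimate, and your Step~1 (sup-convolution), Step~2 (splitting the normalized $p$-Laplacian into an $\infty$-Laplacian piece and a mean-curvature piece), and the closing $\varepsilon\to0$ stability argument are all consistent with the paper's proof of Theorem~\ref{A.B.P.caseforp} read with $\Psi\equiv1$, $\mathscr{H}\equiv0$, $\varrho\equiv0$.

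Two points, however, need fixing. First, a normalization mismatch: your decomposition $\Delta_p^{\mathrm{N}} u = (p-1)\,\Delta_\infty^{\mathrm{N}} u + \bigl(\Delta u - \Delta_\infty^{\mathrm{N}} u\bigr)$ corresponds to the convention $\Delta_p^{\mathrm{N}} u = \Delta u + (p-2)\Delta_\infty^{\mathrm{N}} u$ (as in the paper's introduction), but carried through the argument it yields the constant $\tfrac{2}{p-1}$, not the $\tfrac{2p}{p-1}$ you announce at the outset. To get $\tfrac{2p}{p-1}$ one must use the game-theoretic normalization $\Delta_p^{\mathrm{N}} u = \tfrac{1}{p}\Delta u + \tfrac{p-2}{p}\Delta_\infty^{\mathrm{N}} u = \tfrac{1}{p}|Du|\,\mathrm{div}\bigl(\tfrac{Du}{|Du|}\bigr) + \tfrac{p-1}{p}\Delta_\infty^{\mathrm{N}} u$, which is both Charro's convention and the one the paper actually uses in \eqref{p-laplacian}; the missing $\tfrac{1}{p}$ is precisely where the extra factor of $p$ originates. (The paper's introduction and its proof of Theorem~\ref{A.B.P.caseforp} are not consistent with each other on this point, so the confusion is understandable --- but your statement of the final constant and your Step~2 formula contradict one another, and you must commit to one convention throughout.)

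Second, your Step~3 misdescribes the roles of the two geometric ingredients. The Gauss--Bonnet Theorem~\ref{theorem9} controls $\int_{\{\Gamma=r\}}\prod_{i=1}^{n-1}\kappa_i\,d\mathcal{H}^{n-1} = \mathcal{H}^{n-1}(\partial \mathrm{B}_1)$ (the \emph{product} of the principal curvatures, i.e.\ the Gauss curvature), and it enters the \emph{upper} bound on the integral $\mathscr{I}=\int_{D\Gamma(\Omega_r^\ast)}|z|^{2-n}\,dz$ via Proposition~\ref{Proposition 1}(3), which expresses $\det(-D^2\Gamma)$ in terms of $|D\Gamma|^{n-1}$, $-\Delta_\infty^{\mathrm{N}}\Gamma$, and $\prod\kappa_i$. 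The \emph{sum} $\sum\kappa_i$, by contrast, arises only from the 1-Laplacian piece of $\Delta_p^{\mathrm{N}}$ and is simply \emph{dropped} because concavity of $\Gamma$ forces $\sum\kappa_i\ge0$; it is not the object bounded by Gauss--Bonnet. Likewise, the \emph{lower} bound on $\mathscr{I}$ does not come from a pointwise estimate $|D\Gamma|\ge (r-\sup_{\partial\Omega}u^+)/d$: it comes from the cone inclusion $\mathrm{B}_{\mathrm{M}/d}(0)\subset \partial K_{\bar x}(\Omega^\ast)$, where $K_{\bar x}$ is the concave cone with vertex at the max of $u^+$ over the base $\Omega^\ast$, followed by the polar-coordinates computation as in \eqref{estimateofI5}. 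With the decomposition convention corrected and Step~3 rewritten to use $\prod\kappa_i$/Gauss--Bonnet for the upper bound and the cone inclusion for the lower bound, your outline does reproduce \cite[Theorem~3]{Charro13} as a corollary of Theorem~\ref{A.B.P.caseforp}.
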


\medskip

In concluding our state-of-the-art on $L^{\infty}-$estimates, Baasandorj et al.~\cite{BBLL24} recently 
established an A.B.P.-type estimate for a broad class of singular or degenerate 
fully nonlinear operators of the form 
\begin{equation}\label{Phi-FullyNon}
    \Phi(x,|D u(x)|)F(D^2u(x)) = f(x) \quad \text{in } \quad \Omega \subset \mathbb{R}^n,
\end{equation}
where, for $\mathrm{X}, \mathrm{Y} \in \text{Sym}(n)$ with $\mathrm{Y} \geq 0$ (in the matrix sense),
\begin{equation}
   \mathcal{P}^{-}_{\lambda,\Lambda}(\mathrm{Y}) \,\leq\, F(\mathrm{X}+\mathrm{Y})-F(\mathrm{Y}) \,\leq\, \mathcal{P}^{+}_{\lambda,\Lambda}(\mathrm{Y}), 
\end{equation}
where
$$
\mathcal{P}_{\lambda, \Lambda}^+(\mathrm{M}) \coloneqq \sup_{\mathbf{A} \in \mathfrak{A}_{\lambda, \Lambda}} \operatorname{tr}(\mathbf{A} \mathrm{M}) \quad \text{and} \quad 
\mathcal{P}_{\lambda, \Lambda}^-(\mathrm{M}) \coloneqq \inf_{\mathbf{A} \in \mathfrak{A}_{\lambda, \Lambda}} \operatorname{tr}(\mathbf{A} \mathrm{M})
$$
denote the \textit{Pucci extremal operators}, for $0< \lambda \leq \Lambda$ (ellipticity constants)
$$
\mathfrak{A}_{\lambda, \Lambda} \coloneqq \{\mathbf{A} \in \text{Sym}(n): \lambda \mathrm{Id}_n \leq \mathbf{A} \leq \Lambda \mathrm{Id}_n\}.
$$
and $\Phi : \Omega \times (0,\infty) \to [0,\infty)$ is a continuous function satisfying:
\begin{itemize}
    \item[\checkmark] There exist constants $s(\Phi) \geq i(\Phi) > -1$ such that the mapping 
    $t \mapsto \frac{\Phi(x,t)}{t^{\,i(\Phi)}}$ is almost non-decreasing with constant ${L} \geq 1$ 
    on $(0,\infty)$, while $t \mapsto \frac{\Phi(x,t)}{t^{\,s(\Phi)}}$ is almost non-increasing 
    with the same constant $L$.
    \item[\checkmark] There exist constants $0 < \nu_0 \leq \nu_1$ such that 
    $\nu_0 \leq \Phi(x,1) \leq \nu_1$ for all $x \in \Omega$.
\end{itemize}

Under these assumptions, the authors proved that if $u \in C^0(\Omega)$ is a viscosity 
subsolution of \eqref{Phi-FullyNon} in $\{u > 0\}$, then
\begin{equation}
    \sup_{\Omega} u \;\leq\; \sup_{\partial\Omega} u^{+} 
    + \mathrm{C}_0 \, \mathrm{diam}(\Omega) 
    \max \Bigg\{ 
        \bigl\| f^{-} \bigr\|^{\tfrac{1}{i(\Phi)+1}}_{L^n(\Gamma^{+}(u^{+}))}, \,
        \bigl\| f^{-} \bigr\|^{\tfrac{1}{s(\Phi)+1}}_{L^n(\Gamma^{+}(u^{+}))} 
    \Bigg\} + 1, 
\end{equation}
for some constant $\mathrm{C}_0>0$ depending only on $n,\lambda,i(\Phi),s(\Phi),L$, and $\nu_0$.

Finally, unlike the results in the non-variational setting (see \eqref{eqA.B.P.}, \eqref{Eq-p-Lapla-type}, and \eqref{Phi-FullyNon}), the framework developed in Theorems~\ref{Theorem3.1} and~\ref{A.B.P.caseforp} provides key advantages, introducing a novel approach to the analysis of A.B.P. estimates in the quasilinear context \eqref{1.1}. Moreover, the method is sufficiently flexible to yield boundedness estimates even for solutions of other fully nonlinear degenerate or singular models related to the normalized \(p\)-Laplacian.

\bigskip

The Strong Maximum Principle (SMP for short) for the normalized $p$-Laplacian has been established in several settings. In Euclidean domains, K\"{u}hn (\cite[Theorem 3.2]{kuehn2022}) proved that viscosity subsolutions of $-\Delta_p^{\mathrm{N}} u(x) \le 0$ that attain an interior maximum are constant, and showed that the SMP is equivalent to the strong comparison principle (Theorem~3.7). Moreover, a Hopf-type lemma under an interior sphere condition was also obtained in \cite[Lemma 3.14]{kuehn2022}. More generally, Goffi and Pediconi in \cite[Theorem 4.2]{goffi-pediconi2021}, derived the SMP for degenerate fully nonlinear equations on Riemannian manifolds, which applies to $x \mapsto \Delta_p^{\mathrm{N}} u(x)$ in $\mathbb{R}^n$ as a special case.

Regarding Liouville-type theorems, Alcantara, da Silva, and S\'a (\cite[Theorems~1.13 and 1.14]{alcantara2025}) considered weighted equations of the form
\[
|\nabla u|^\gamma \, \Delta_p^{\mathrm{N}} u(x) = \mathfrak{a}(x) u^m(x) \quad \text{in } \mathbb{R}^n,
\]
proving that nonnegative viscosity subsolutions are constant when $0<m < \gamma+1$ under suitable sign conditions on $\mathfrak{a}$ provided that
\[
\limsup_{|x| \to \infty} \frac{u(x)}{|x|^{\beta}} <\frac{1}{\beta^{\frac{\gamma+1}{\gamma+1-m}}}\left[\frac{\lambda_0}{n-1+(p-1)(\beta-1)}\right]^{\frac{1}{\gamma+1-m}},
\]
where $\beta = \beta(\gamma, m) := \frac{\gamma + 2}{\gamma + 1 - m}$ and $\displaystyle 0< \lambda_0 = \inf_{\mathbb{R}^n} \mathfrak{a}$. In a more general context, Bardi and Cesaroni (\cite[Theorems~2.1--2.2]{bardi-cesaroni2016}) established Liouville properties for a wide class of degenerate fully nonlinear elliptic operators in $\mathbb{R}^n$, covering the normalized $p$-Laplacian when zero- and first-order terms are absent. In \cite{BisVo23}, the authors established Liouville-type theorems for equations involving the $(3-\gamma)$-homogeneous infinity-Laplacian with gradient terms of the form
\[
\Delta^\gamma_\infty u + q(x)\cdot \nabla u \, |\nabla u|^{2-\gamma} + f(x,u) = 0 \quad \text{in } \mathbb{R}^d, \quad \text{for} \quad \Delta^\gamma_\infty u \defeq  \frac{1}{|Du|^{\gamma}} \Delta_{\infty} u\,\, \,\text{with}\,\,\,\gamma \in [0, 2].
\]
They proved, for instance, that any bounded-below Lipschitz supersolution to 
\[
-\Delta^\gamma_\infty u -\mathrm{c}.|\nabla u|^{4-\gamma} = 0 \quad \text{in} \quad \mathbb{R}^d, \,\,\,(\text{for}\,\,\, \mathrm{c}\leq 0)
\]
is necessarily constant, and under suitable decay of $q(x)$ given by $\displaystyle \limsup_{|x|\to+\infty}(q(x)\cdot x)_+<1$, the same result holds for
\[
\Delta^\gamma_\infty u + q(x)\cdot \nabla u \, |\nabla u|^{2-\gamma} = 0 \quad \text{in} \quad \mathbb{R}^d.
\]
 Motivated by their work, we extend such Liouville-type theorems to our operator in more general settings.


\section{Preliminaries}\label{Section2}

This section gathers some definitions and auxiliary results
used throughout this manuscript. We denote by $\mathrm{B}_r(x_0)$ the ball of radius $r$ centered at $x_0$. In what follows, we define viscosity solutions. 

\begin{definition}[{\bf Viscosity Solutions}]\label{DefViscSol}
Let $\Omega\subset \mathbb R^n$ be an open set. An upper (respectively, a lower) semicontinuous function \( u: \Omega\to \mathbb{R}\) is called a viscosity subsolution (respectively, supersolution) of
\begin{equation}\label{Vis1}
-\Psi(x,|D  u|)\Delta_{p}^{\mathrm{N}} u+\mathscr{H}(x,D  u)= f(x) \quad \text{in} \quad \Omega,
\end{equation}
if, for every \(x_{0}\in\Omega\) one of the following conditions hold: 

\begin{enumerate}
\item Either for all \( \phi \in \mathrm{C}^{2}(\Omega) \) and \( u - \phi \) attains a local maximum (respectively, minimum) at \( x_{0} \in \Omega \) and \( D  \phi(x_{0}) \neq 0 \), then
\[
-\Psi(x,|D  \phi(x_0)|)\Delta_{p}^{\mathrm{N}} \phi(x_{0})+\mathscr{H}(x_{0},D \phi(x_{0}))\leq f(x_{0})
\]
 \[
\text{(respectively, }\, -\Psi(x,|D  \phi(x_0)|)\Delta_{p}^{\mathrm{N}} \phi(x_{0})+\mathscr{H}(x_{0},D \phi(x_{0}))\geq f(x_{0})\text{)}.
    \]

    \item Or there exists an open ball \(\mathrm{B}_{\delta}(x_{0})\subset \Omega\) for some \(\delta>0\) such that \(u_{|_{\mathrm{B}_{\delta}(x_{0})}}\) is constant  and 
\[
0\leq f(x) \  \text{(respectively,}\, 0\geq f(x) ),\, \text{for all } x\in \mathrm{B}_{\delta}(x_{0}). 
\]
\end{enumerate}
Finally, we say that \( u\in C^{0}(\Omega) \) is a viscosity solution to \eqref{Vis1} if it satisfies the conditions for both subsolution and supersolution.
\end{definition}

\begin{definition}[{\bf Concave envelope}]
Consider a function $u \in C^{0}(\mathscr{A})$, where $\mathscr{A} \subset \mathbb{R}^{n}$ is an open and convex set. The concave envelope of the function $u$ on $\mathscr{A}$ is the function $\Gamma(u): \mathscr{A} \to \mathbb{R}$ defined by
\begin{eqnarray*}
\Gamma(u)(x)&=&\inf\{w(x):w \text{ is a concave function in }\mathscr{A}\text{ such that }w\geq u\}\\
&=&\inf_{\mathscr{L}\in\mathcal{L}}\{\mathscr{L}(x):\mathscr{L}\geq u \text{ in } \mathscr{A}\},
\end{eqnarray*}
where \(\mathcal{L}\) is the set of all affine functions defined in \(\mathbb{R}^{n}\).
\end{definition}
Clearly, $\Gamma(u)$ is a concave function on $\mathscr{A}$. Henceforth, we define the (upper) contact set of $u$ by
\[
\mathrm{C}(u)=\{x\in \mathscr{A}:\Gamma(u)(x)=u(x)\}.
\] 
Further, the points in \(\mathrm{C}(u)\)
are called contact points.\\
\begin{definition}[{\bf Superjets and Subjets}]
Let $\Omega \subseteq \mathbb{R}^n$ be an open set and $u:\Omega \to \mathbb{R}$ be a continuous function. 
The {second-order superjet} of $u$ at $x_0 \in \Omega$ is the set
\begin{align*}
\mathcal J_\Omega^{2,+} u(x_0) :=
\Bigl\{ (\xi,\mathrm{X}) \in \mathbb{R}^n \times \mathrm{Sym}(n) \ &: \ 
u(x) \le u(x_0) + \xi \cdot (x-x_0) + \tfrac{1}{2}(x-x_0)^T \mathrm{X} (x-x_0) 
\\ & \qquad\qquad\qquad+ o(|x-x_0|^2) \;\; \text{as } x \to x_0 \Bigr\}.
\end{align*}

\medskip 

 The {second-order subjet} of $u$ at $x_0 \in \Omega$ is the set
\begin{align*}
\mathcal{J}_\Omega^{2,-} u(x_0) :=
\Bigl\{ (\xi,\mathrm{X}) \in \mathbb{R}^n \times \mathrm{Sym}(n) \ & : \ 
u(x) \ge u(x_0) + \xi \cdot (x-x_0) + \tfrac{1}{2}(x-x_0)^T \mathrm{X} (x-x_0) 
\\&\qquad\qquad\qquad+ o(|x-x_0|^2) \;\; \text{as } x \to x_0 \Bigr\}.
\end{align*}
 In other words, $(\xi,\mathrm{X}) \in \mathcal J_\Omega^{2,+}u(x_0)$ if the quadratic polynomial
\[
\varphi(x) := u(x_0) + \xi \cdot (x-x_0) + \tfrac{1}{2}(x-x_0)^T \mathrm{X} (x-x_0)
\]
{touches $u$ from above at $x_0$}, up to second order and $(\xi,\mathrm{X}) \in \mathcal J_{\Omega}^{2,-}u(x_0)$ if the quadratic polynomial
\[
\phi(x) := u(x_0) + \xi \cdot (x-x_0) + \tfrac{1}{2}(x-x_0)^T \mathrm{X} (x-x_0)
\]
{touches $u$ from below at $x_0$}, up to second order.
\end{definition}

\begin{remark}
On the previous definition:
\begin{itemize}
  \item $(\xi,\mathrm{X}) \in \mathcal J_\Omega^{2,+}u(x_0)$ means that there exists a quadratic function
  $\varphi$ with gradient $p$ and Hessian $X$ at $x_0$ such that $u-\varphi$ attains a local maximum at $x_0$. For a viscosity subsolution, one tests the PDE using jets from $\mathcal J_\Omega^{2,+}u(x_0)$.
  \item $(\xi,\mathrm{X}) \in \mathcal J_\Omega^{2,-}u(x_0)$ means that there exists a quadratic function
  $\Psi$ with gradient $p$ and Hessian $X$ at $x_0$ such that $u-\Psi$ 
  attains a local minimum at $x_0$. For a {viscosity supersolution}, one tests the PDE using jets from $\mathcal  J_\Omega^{2,-}u(x_0)$.
\end{itemize}
\end{remark}

\medskip

We now recall Proposition~1 and Lemma~5 from \cite{Charro13}, which are key tools for establishing our A.B.P.~estimates.

Before stating these results, note that a Lipschitz function 
\(v : \mathcal{M} \to \mathcal{N}\) between two \(C^{1}\) manifolds is differentiable \(\mathcal H^{n-1}\)-a.e. 
We denote by \(\nabla^{\mathcal{M}} v\) its tangential gradient, i.e., 
the linear map from \(\mathrm{T}_x \mathcal{M}\) to \(\mathrm{T}_x \mathcal{N}\) defined by
\[
\nabla^{\mathcal{M}} v \cdot \xi := \lim_{t \to 0} \frac{v(x+t\xi) - v(x)}{t},
\qquad  \xi \in \mathrm{T}_x \mathcal{M}.
\]
Here \(\mathcal H^{n-1}\) denotes the \((n-1)\)-dimensional Hausdorff measure, which coincides with the standard surface measure on \(C^1\) manifolds.

\begin{proposition}\label{Proposition 1}
Let $w$ be a concave $C^{1,1}$ function in a ball $\mathrm{B}_R(0)$ with $w = 0$ on $\partial \mathrm{B}_R(0)$. Then:
\begin{enumerate}
   \item For every level $\displaystyle r \in (0, \sup_{B_R(0)} w)$, we have $\nabla w \neq \vec{0}$ and 
    \[
    \mathrm{M}_r := \{ w = r \}
    \]
    is a $C^{1,1}$ manifold.
    
    \item For a.e. $\displaystyle r \in [0, \sup_{\mathrm{B}_R(0)} w]$, $\mathcal H^{n-1}$-a.e. $x \in \mathrm{M}_r$ is a point of twice differentiability for $w$.
    
    \item Let $\displaystyle r \in (0, \sup_{\mathrm{B}_R(0)} w)$. If $x \in \mathrm{M}_r$ is a point of twice differentiability, then
    \begin{align}\label{eq:second_derivative}
    -D^2 w(x) 
    &= |\nabla w(x)| \sum_{i=1}^{n-1} \kappa_i(x) \, \tau_i(x) \otimes \tau_i(x) \\
    &\quad - \sum_{i=1}^{n-1} \partial^2_{\nu \tau_i} w(x) 
        \big( \tau_i(x) \otimes \nu(x) + \nu(x) \otimes \tau_i(x) \big) \notag \\
    &\quad - \Delta^\mathrm{N}_\infty w(x) \, \nu(x) \otimes \nu(x), \notag
    \end{align}
    where:
    \begin{enumerate}
        \item $\nu(x)$ is the exterior normal to $\mathrm{M}_r$ at $x$, i.e.
        \[
        \nu(x) = - \frac{\nabla w(x)}{|\nabla w(x)|}.
        \]
        
        \item $\{\tau_i(x)\}_{i=1}^{n-1}$ is an orthonormal basis of $\mathrm{T}_x \mathrm{M}_r$ that diagonalizes the Weingarten operator $\nabla^{\mathcal{M}} \nu$ at $x$.
        
        \item $\kappa_i(x)$ are the eigenvalues of $\nabla^{\mathcal{M}} \nu$ at $x$, which are the principal curvatures of $\mathrm{M}_r$ at $x$.
    \end{enumerate}
\end{enumerate}
\end{proposition}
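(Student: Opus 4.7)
The plan is to establish the three claims in order, using Alexandrov's theorem, the implicit function theorem, and a direct computation of $-D^{2}w$ in an adapted orthonormal frame. Since $w$ is concave and vanishes on $\partial B_R(0)$, the super-level sets $\{w\geq r\}$ are convex with $w>r$ in their interior whenever $r<\sup w$. Any interior critical point would necessarily be a global maximum by concavity, so on every level $M_r$ one has $\nabla w\neq\vec{0}$. Combined with $w\in C^{1,1}$, the implicit function theorem then yields $C^{1,1}$ local charts for $M_r$, proving \textbf{(1)}.

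For \textbf{(2)}, Alexandrov's theorem gives that the $C^{1,1}$ concave function $w$ is twice differentiable at Lebesgue-a.e. point of $B_R(0)$; let $\mathcal N$ denote the exceptional set, so $|\mathcal N|=0$. The coarea formula produces
\[
\int_{0}^{\sup w}\!\!\int_{M_r\cap\mathcal N}|\nabla w|\,d\mathcal H^{n-1}\,dr \;=\; \int_{\mathcal N}|\nabla w|\,dx \;=\; 0,
\]
and since $|\nabla w|>0$ on each $M_r$ by \textbf{(1)}, this forces $\mathcal H^{n-1}(M_r\cap\mathcal N)=0$ for a.e.\ $r\in(0,\sup w)$, which is exactly \textbf{(2)}.

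For \textbf{(3)}, fix a twice-differentiable point $x\in M_r$ and expand $-D^{2}w(x)$ in the orthonormal basis $\{\tau_1,\ldots,\tau_{n-1},\nu\}$. The mixed and normal coefficients are read off directly: $-\langle D^{2}w\,\tau_i,\nu\rangle=-\partial^{2}_{\nu\tau_i}w$, while, using $\nu=-\nabla w/|\nabla w|$,
\[
-\langle D^{2}w\,\nu,\nu\rangle \;=\; -|\nabla w|^{-2}\,\langle D^{2}w\,\nabla w,\nabla w\rangle \;=\; -\Delta^{\mathrm N}_{\infty}w(x).
\]
To identify the tangential block, differentiate the identity $\nu=-\nabla w/|\nabla w|$ along $\tau_i$ and take the inner product with a tangent vector $\tau_j$; since $\tau_j\cdot\nabla w=0$, the $\nabla w$-proportional piece drops out and one recovers the classical Hessian/shape-operator relation
\[
\langle\nabla^{\mathcal M}_{\tau_i}\nu,\tau_j\rangle \;=\; -\frac{1}{|\nabla w|}\,D^{2}w(\tau_i,\tau_j).
\]
Because $\{\tau_i\}$ diagonalizes $\nabla^{\mathcal M}\nu$ with eigenvalues $\kappa_i$, this yields $-D^{2}w(\tau_i,\tau_j)=|\nabla w|\,\kappa_i\,\delta_{ij}$, which plugged into the expansion produces the announced formula \eqref{eq:second_derivative}.

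\textit{Main obstacle.} The delicate step is the tangential-block identification in \textbf{(3)}: all signs hinge on the convention that $\nu=-\nabla w/|\nabla w|$ points outward to the convex super-level set $\{w\geq r\}$ and that the shape operator is $\nabla^{\mathcal M}\nu$, which jointly make the $\kappa_i$ positive and produce the correct prefactor $|\nabla w|\kappa_i$ in front of $\tau_i\otimes\tau_i$. The coarea argument in \textbf{(2)} is subtle but only requires that $|\nabla w|$ be bounded away from zero along each level, which is exactly what Part \textbf{(1)} provides.
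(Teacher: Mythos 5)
The paper does not give its own proof of this proposition: it is stated verbatim as a recall of Proposition~1 from Charro--De Philippis--Di Castro--M\'aximo \cite{Charro13}, whose proof is exactly the adapted-frame/coarea argument you reconstruct. Your reasoning is correct and is essentially the same as the cited proof.

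Two small remarks for precision. In Part~(2), the display you wrote is not quite the coarea formula: with test function $\chi_{\mathcal N}$ the coarea formula reads
\[
\int_{\mathcal N}|\nabla w|\,dx \;=\;\int_{0}^{\sup w}\mathcal H^{n-1}\bigl(M_r\cap\mathcal N\bigr)\,dr ,
\]
with no extra $|\nabla w|$ inside the inner integral (what you wrote corresponds to the choice $g=\chi_{\mathcal N}|\nabla w|$ and therefore has $\int_{\mathcal N}|\nabla w|^2\,dx$ on the other side). Both left-hand sides vanish because $|\mathcal N|=0$, so the conclusion $\mathcal H^{n-1}(M_r\cap\mathcal N)=0$ for a.e.\ $r$ holds either way, and your supplementary observation that $|\nabla w|>0$ on $M_r$ is then not actually needed. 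In Part~(1), the step ``$w\in C^{1,1}$ plus IFT gives $C^{1,1}$ charts'' deserves one sentence: the $C^1$ implicit function theorem produces a $C^1$ chart whose gradient is $-D'F/\partial_\nu F$ evaluated along the graph, a quotient of Lipschitz functions with denominator locally bounded away from zero, hence Lipschitz; that is what upgrades the chart to $C^{1,1}$. The tangential-block computation in Part~(3) is exactly right, including the sign bookkeeping with $\nu=-\nabla w/|\nabla w|$.
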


\begin{lemma}\label{lemma5}
Let $\widetilde\Omega$ be a convex domain and $u$ a continuous semiconvex function such that 
\[
u \leq 0 \quad \text{in } \mathbb{R}^n \setminus \widetilde\Omega, \qquad u(x_0) > 0 \text{ for some } x_0 \in \widetilde\Omega.
\]
Let $\Gamma_{\sigma}(u^+)$ be the concave envelope of $u^+$ (extended by $0$ outside $\widetilde\Omega$) in
\[
\widetilde\Omega_{\sigma} = \{x \in \mathbb{R}^n : \operatorname{dist}(x,\widetilde\Omega) \leq \sigma \}.
\]
Then $\Gamma_{\sigma}(u)$ is $C^{1,1}_{\mathrm{loc}}$ in $\widetilde\Omega_{\sigma}$ (and hence twice differentiable almost everywhere). Moreover,
\[
\{x \in \widetilde\Omega : \det D^2 \Gamma_{\sigma}(u)(x) \neq 0 \} \subset \mathrm{C}^+_{\sigma}(u),
\]
where $\mathrm{C}^+_{\sigma}(u)$ is the set of points in $\widetilde\Omega_{\sigma}$ where $u = \Gamma_{\sigma}(u)$.
\end{lemma}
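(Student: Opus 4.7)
The plan is to combine three classical ingredients: a uniform Lipschitz bound on $\Gamma_\sigma(u^+)$ coming from the padding of size $\sigma$, a one-sided quadratic lower bound at contact points coming from the semiconvexity of $u$, and Carath\'eodory's theorem on extreme points of concave envelopes to handle the claim about $\det D^2\Gamma_\sigma(u^+)$. Together these imply both the $C^{1,1}_{\mathrm{loc}}$ regularity and the contact set inclusion.

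First I would establish that $\Gamma_\sigma(u^+)$ is locally Lipschitz in $\widetilde\Omega_\sigma$ with an explicit control. Since $u^+$ is compactly supported inside $\widetilde\Omega$, any affine majorant $\mathscr{L}(y)=a\cdot y+b$ of $u^+$ on $\widetilde\Omega_\sigma$ must satisfy $\mathscr{L}\geq 0$ on $\widetilde\Omega_\sigma\setminus\widetilde\Omega$, while $\mathscr{L}\geq\sup u^+$ at some point of $\widetilde\Omega$. Comparing these two conditions across a chord of length at least $\sigma$ gives $|a|\leq \sup u^+/\sigma$, so every supergradient of $\Gamma_\sigma(u^+)$ is bounded by this constant; hence $\Gamma_\sigma(u^+)$ is globally Lipschitz with constant depending only on $\sup u^+$ and $\sigma$.

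Next I would upgrade the concave (hence semiconcave) function $\Gamma_\sigma(u^+)$ to $C^{1,1}_{\mathrm{loc}}$ by producing a matching quadratic minorant. Since $u$ is semiconvex, there exists $C\geq 0$ such that $u(y)+\tfrac{C}{2}|y|^2$ is convex, and at any contact point $x_0\in\mathrm{C}^+_\sigma(u)$ in $\widetilde\Omega$ this yields an affine function $\ell_{x_0}$ with $u(y)\geq \ell_{x_0}(y)-\tfrac{C}{2}|y-x_0|^2$ locally. Combined with the upper affine bound coming from concavity of $\Gamma_\sigma(u^+)$ and the Lipschitz bound of the previous step, this produces a paraboloid of opening $O(1/\sigma+C)$ touching $\Gamma_\sigma(u^+)$ from below at each contact point. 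At non-contact points the envelope is affine along some segment (discussed next), so the two-sided bound on the Hessian is trivial there. This is the standard route to $C^{1,1}$ regularity of concave envelopes (cf. Cabr\'e--Caffarelli's proof of A.B.P.), and it is the step I expect to require the most care — especially in making the quadratic bound uniform in compact subsets of $\widetilde\Omega_\sigma$ rather than just pointwise.

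Finally, for the inclusion $\{x\in\widetilde\Omega:\det D^2\Gamma_\sigma(u^+)(x)\neq 0\}\subset \mathrm{C}^+_\sigma(u)$, I would argue by contrapositive. If $x\notin \mathrm{C}^+_\sigma(u)$, then $\Gamma_\sigma(u^+)(x)>u^+(x)$, and by Carath\'eodory's theorem applied to the concave envelope, $x$ can be written as a non-trivial convex combination $x=\sum_{i=1}^{k}\lambda_i x_i$ with $k\leq n+1$, $\lambda_i>0$, of contact points $x_i\in\mathrm{C}^+_\sigma(u)$ with $x_i\neq x$ for at least one $i$, and $\Gamma_\sigma(u^+)\equiv\sum\lambda_i u^+(x_i)+\text{affine}$ on the convex hull $\mathrm{conv}\{x_1,\ldots,x_k\}$. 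In particular, $\Gamma_\sigma(u^+)$ coincides with an affine function on a non-trivial segment through $x$, so its Hessian (which exists a.e. by the $C^{1,1}$ regularity established above) has a zero eigenvalue along the direction of that segment, forcing $\det D^2\Gamma_\sigma(u^+)(x)=0$. This completes the proof.
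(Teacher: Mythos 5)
The paper itself does not prove this lemma; it is quoted from Charro, De Philippis, Di Castro and M\'aximo \cite{Charro13} (their Lemma~5), which in turn rests on Caffarelli's Lemma~3.5 in \cite{CC-Book}. So there is no in-paper proof to compare against, and your proposal must stand on its own.

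Your Steps~1 and~3 are essentially correct, but Step~2 contains a genuine gap precisely where you flag that ``most care'' is needed. The assertion that at a non-contact point the envelope is affine along a segment, ``so the two-sided bound on the Hessian is trivial there,'' does not hold: affineness along a segment through $x$ forces only a zero eigenvalue of $D^2\Gamma_\sigma(u^+)(x)$ in that direction (which is exactly what you correctly reuse in Step~3 to conclude $\det D^2\Gamma_\sigma(u^+)(x)=0$), while saying nothing about the transverse directions. Concavity already supplies the upper bound $D^2\Gamma_\sigma(u^+)\leq 0$; the entire content of the $C^{1,1}$ claim is the lower bound away from the contact set, and the segment argument does not produce it. The correct mechanism, following Caffarelli--Cabr\'e, propagates the semiconvexity from the contact points through the Carath\'eodory decomposition itself: writing $x=\sum_i\lambda_i x_i$ with $x_i\in\mathrm{C}^+_\sigma(u)$ and $\Gamma_\sigma(u^+)(x)=\sum_i\lambda_i u^+(x_i)$, concavity together with $\Gamma_\sigma(u^+)\geq u^+$ gives $\Gamma_\sigma(u^+)(x\pm h)\geq\sum_i\lambda_i u^+(x_i\pm h)$, and the semiconvexity of $u^+$ at each contact point $x_i$ yields $u^+(x_i\pm h)\geq u^+(x_i)\pm p_i\cdot h-\tfrac{C}{2}|h|^2$ for $|h|$ small; adding the two inequalities, the linear terms cancel and one obtains
\[
-C|h|^2 \;\leq\; \Gamma_\sigma(u^+)(x+h)+\Gamma_\sigma(u^+)(x-h)-2\Gamma_\sigma(u^+)(x)\;\leq\; 0,
\]
uniformly on compact subsets of $\widetilde\Omega_\sigma$, which is the local $C^{1,1}$ estimate. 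Replace the ``affine along a segment $\Rightarrow$ trivial Hessian bound'' sentence by this computation, or by an explicit citation of \cite[Lemma~3.5]{CC-Book}.
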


\medskip

Another key result is a Gauss--Bonnet theorem for \(C^{1,1}\) convex sets. Its proof follows directly from the Area Formula for rectifiable sets (see \cite[Corollary~3.2.20]{federer}).

\begin{theorem}[{\bf Gauss-Bonnet}]\label{theorem9}
Let $\mathcal{K} \subset \mathbb{R}^n$ be a $C^{1,1}$ convex set and let $\nu_{\mathcal{K}}$ denote its outer normal. Then
\begin{equation}\label{eq:gauss-bonnet}
\int_{\partial \mathcal{K}} \prod_{i=1}^{n-1} \kappa_i(x)\, d \mathcal H^{n-1}
= \int_{\partial \mathcal{K}} \det\!\big( \nabla^{\partial \mathcal{K}} \nu_{\mathcal{K}} \big)\, d\mathcal{H}^{n-1}
= \mathcal H^{n-1}\!\big(\partial \mathrm{B}_1(0)\big).
\end{equation}
\end{theorem}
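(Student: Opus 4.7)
The plan is to deduce the statement directly from the Area Formula for rectifiable sets, applied to the Gauss map (outer-normal map) $\nu_{\mathcal{K}}:\partial \mathcal{K}\to \partial \mathrm{B}_1(0)$. First, I would note that since $\partial \mathcal{K}$ is a $C^{1,1}$ hypersurface (hence $(n-1)$-rectifiable) and $\mathcal{K}$ is a (compact) convex body, the map $\nu_{\mathcal{K}}$ is Lipschitz between $(n-1)$-rectifiable sets. By Rademacher's theorem, $\nu_{\mathcal{K}}$ is differentiable $\mathcal{H}^{n-1}$-a.e., and at any such point $x$ the tangential differential $\nabla^{\partial \mathcal{K}}\nu_{\mathcal{K}}(x):\mathrm{T}_x \partial\mathcal{K}\to \mathrm{T}_{\nu_{\mathcal{K}}(x)}\partial \mathrm{B}_1(0)$ coincides with the Weingarten (shape) operator of $\partial \mathcal{K}$ at $x$. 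Both tangent spaces are the orthogonal complement of the common vector $\nu_{\mathcal{K}}(x)$, so they are naturally identified; consequently, the eigenvalues of $\nabla^{\partial \mathcal{K}}\nu_{\mathcal{K}}(x)$ are exactly the principal curvatures $\kappa_1(x),\ldots,\kappa_{n-1}(x)$. This yields the pointwise identity
\[
\det\!\bigl(\nabla^{\partial \mathcal{K}}\nu_{\mathcal{K}}(x)\bigr)=\prod_{i=1}^{n-1}\kappa_i(x)\qquad \text{for } \mathcal{H}^{n-1}\text{-a.e. } x\in \partial\mathcal{K},
\]
which is precisely the first equality in \eqref{eq:gauss-bonnet}.

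Next, I would invoke Federer's Area Formula (Corollary 3.2.20) to obtain
\[
\int_{\partial \mathcal{K}} \bigl|\det\!\bigl(\nabla^{\partial \mathcal{K}}\nu_{\mathcal{K}}(x)\bigr)\bigr|\,d\mathcal{H}^{n-1}(x) = \int_{\partial \mathrm{B}_1(0)} \#\bigl(\nu_{\mathcal{K}}^{-1}(y)\bigr)\,d\mathcal{H}^{n-1}(y).
\]
Convexity of $\mathcal{K}$ forces $\kappa_i\geq 0$ for every $i$, so the absolute value on the left can be dropped. To identify the right-hand side with $\mathcal{H}^{n-1}(\partial \mathrm{B}_1(0))$, I would verify that $\nu_{\mathcal{K}}$ is $\mathcal{H}^{n-1}$-a.e.\ bijective onto the unit sphere. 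Surjectivity is immediate: for any $y\in \partial \mathrm{B}_1(0)$, compactness of $\partial \mathcal{K}$ together with convexity guarantees that any maximizer of $z\mapsto y\cdot z$ on $\mathcal{K}$ is a point on $\partial \mathcal{K}$ with outer normal $y$. For essential injectivity, if distinct $x_1,x_2\in \partial\mathcal{K}$ both satisfied $\nu_{\mathcal{K}}(x_1)=\nu_{\mathcal{K}}(x_2)=y$, then convexity would force the entire segment $[x_1,x_2]$ to lie in $\partial\mathcal{K}$ with constant outer normal $y$; hence non-injectivity corresponds exclusively to flat faces of $\partial \mathcal{K}$. Each such flat face carries positive $\mathcal{H}^{n-1}$-mass on $\partial \mathcal{K}$, so by finiteness of $\mathcal{H}^{n-1}(\partial \mathcal{K})$ there are at most countably many of them, and the set of their common normals in $\partial \mathrm{B}_1(0)$ is at most countable, hence $\mathcal{H}^{n-1}$-negligible.

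Combining these steps gives $\#\nu_{\mathcal{K}}^{-1}(y)=1$ for $\mathcal{H}^{n-1}$-a.e.\ $y\in \partial \mathrm{B}_1(0)$, and therefore both equalities in \eqref{eq:gauss-bonnet}. The main technical obstacle is the identification of $\nabla^{\partial \mathcal{K}}\nu_{\mathcal{K}}$ with the Weingarten operator in the merely $C^{1,1}$ regularity class (rather than $C^2$), which requires Rademacher's theorem together with the fact that the second fundamental form of a convex body is well-defined a.e.\ as a nonnegative symmetric bilinear form. The flat-face analysis underlying the a.e.\ bijectivity of the Gauss map is the other delicate ingredient, but it is a standard feature of convex geometry.
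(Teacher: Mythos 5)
Your route is exactly the one the paper indicates (the Area Formula applied to the Gauss map, following \cite[Corollary 3.2.20]{federer}), and almost all of it is sound: the Lipschitz regularity of $\nu_{\mathcal{K}}$, the a.e.\ identification of $\nabla^{\partial\mathcal{K}}\nu_{\mathcal{K}}$ with the Weingarten operator and hence of the Jacobian with $\prod_i\kappa_i$, the Area Formula itself, and the surjectivity of the Gauss map all go through. The one place that does not hold up is the essential-injectivity step. You argue that non-injectivity corresponds to flat faces, that each such face has positive $\mathcal{H}^{n-1}$-mass, and hence that there are at most countably many bad normal directions. The middle claim is false in general: a $C^{1,1}$ convex body can have flat faces of dimension $k$ with $1\le k<n-1$ (a rounded-off cylinder in $\mathbb{R}^{3}$ has uncountably many one-dimensional faces, one for each line generator, each with its own normal). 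For such bodies the set of bad normals is uncountable, and the countability argument collapses, even though the set is still $\mathcal{H}^{n-1}$-null on $\partial\mathrm{B}_1(0)$ (in the cylinder example it is a great circle).

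The correct way to finish, still within the Area Formula framework, is a Sard-type argument rather than a cardinality one. Let $S\subset\partial\mathcal{K}$ be the set of points $x$ for which $\nu_{\mathcal{K}}^{-1}(\nu_{\mathcal{K}}(x))$ is not a singleton; on $S$ the map $\nu_{\mathcal{K}}$ is constant along at least one tangential direction (the face direction), so wherever it is differentiable its tangential Jacobian vanishes, i.e.\ $\det(\nabla^{\partial\mathcal{K}}\nu_{\mathcal{K}})=0$ $\mathcal{H}^{n-1}$-a.e.\ on $S$. Applying the Area Formula to $S$ then gives
\[
0=\int_{S}\det\!\bigl(\nabla^{\partial\mathcal{K}}\nu_{\mathcal{K}}\bigr)\,d\mathcal{H}^{n-1}
=\int_{\partial\mathrm{B}_1(0)}\#\bigl(\nu_{\mathcal{K}}^{-1}(y)\cap S\bigr)\,d\mathcal{H}^{n-1}(y),
\]
so $\nu_{\mathcal{K}}(S)$ is $\mathcal{H}^{n-1}$-negligible, and $\#\nu_{\mathcal{K}}^{-1}(y)=1$ for $\mathcal{H}^{n-1}$-a.e.\ $y$. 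With this replacement your proof is complete and coincides in spirit with the one the paper points to.
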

\noindent Next,  we recall the Ishii–Lions lemma, which is a fundamental tool for establishing compactness and existence results in the theory of partial differential equations. For further details, we refer the reader to \cite[Theorem 3.2]{CIL}.

\begin{lemma}[\bf Ishii-Lions Lemma]\label{IshiiLions}
Let $u_i$ be a upper semicontinuous function in $\mathrm{B}_1$ for $i=1,\ldots,k$. Let $\varphi$ be defined on $(\mathrm{B}_1)^{k}$ and such that the function 
\[
(x_1,\dots,x_k) \to \varphi(x_1,\dots,x_k) 
\]
is twice continuously differentiable in $(x_1,\dots,x_k) \in (\mathrm{B}_1)^k$. Suppose that
\[
w(x_1,\dots,x_k) := u_1(x_1) + \cdots + u_k(x_k) - \varphi(x_1,\dots,x_k)
\]
attains a local maximum at $(\bar{x}_1,\dots,\bar{x}_k) \in (\mathrm{B}_1)^{k}$. Assume, moreover, that there exists an $r > 0$ such that for every $\mathrm{M}_{\star} > 0$ there is a constant $\mathrm{C}_{\star}$ such that for $i=1,\ldots,k$,
\[
b_i \leq \mathrm{C}_{\star} \quad \text{whenever } (\vec{q}_i,X_i) \in \mathcal{J}^{2,+}_{\mathrm{B}_{1}}u_i(x_i),
\]
\[
|x_i - \bar{x}_i|  \leq r, \quad \text{and} \quad |u_i(x_i)| + |\vec{q}_i| + \|\mathrm{X}_i\| \leq \mathrm{C}_{\star}.
\]
Then for each $\varepsilon > 0$, there exist $\mathrm{X}_i \in \mathrm{Sym}(n)$ such that:
\begin{itemize}
\item[(i)] $(D_{x_i}\varphi(\bar{x}_1,\ldots,\bar{x}_k),\mathrm{X}_i) \in \overline{\mathcal{J}}^{2,+}_{\mathrm{B}_{1}}u_i(\bar{x}_i)$ for $i=1,\dots,k$,
\item[(ii)] $- \left( \frac{1}{\varepsilon} + \|\mathrm{A}\| \right) \mathrm{Id}_n \leq \begin{pmatrix} \mathrm{X}_1 & & 0 \\ & \ddots & \\ 0 & & \mathrm{X}_k \end{pmatrix} \leq \mathrm{A} + \varepsilon \mathrm{A}^2$,
\end{itemize}
where $\mathrm{A} = D^2 \varphi(\bar{x}_1,\ldots,\bar{x}_k)$.
\end{lemma}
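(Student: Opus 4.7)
The plan is to follow the classical maximum-principle-for-semicontinuous-functions approach of Crandall--Ishii--Lions and reduce the statement to the case in which each $u_i$ is semiconvex. For this, I would first replace each $u_i$ by its sup-convolution
\[
u_i^{\delta}(x) := \sup_{y \in \mathrm{B}_1}\Bigl\{ u_i(y) - \tfrac{1}{2\delta}|x-y|^2 \Bigr\},
\]
which is semiconvex with constant $1/\delta$, is upper semicontinuous, decreases to $u_i$ as $\delta \downarrow 0$, and whose superjets are controlled by those of $u_i$ (so the a priori bound $b_i \leq \mathrm{C}_\star$ is preserved). Since $u_i^\delta$ is semiconvex, Alexandrov's theorem furnishes pointwise twice differentiability $\mathcal{L}^n$-a.e., which is the starting point for accessing the classical second-order necessary conditions at a maximum.

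Next I would appeal to Jensen's lemma for semiconvex functions: perturbing the test function $\varphi$ by a small affine term, the functional $\tilde{w} = u_1^\delta + \cdots + u_k^\delta - \tilde{\varphi}$ still has a local maximum at some point $(\bar x_1^\eta,\ldots,\bar x_k^\eta)$ close to $(\bar x_1,\ldots,\bar x_k)$, and at this new maximum each $u_i^\delta$ is twice classically differentiable. At such a good point, the standard first- and second-order necessary conditions yield $(D_{x_i}\tilde{\varphi},D^2 u_i^\delta) \in \mathcal{J}^{2,+}_{\mathrm{B}_1} u_i^\delta(\bar x_i^\eta)$ and the block inequality
\[
\operatorname{diag}\bigl(D^2 u_1^\delta,\ldots,D^2 u_k^\delta\bigr)(\bar x^\eta) \;\leq\; D^2 \tilde{\varphi}(\bar x^\eta).
\]
The lower matrix bound $-(1/\delta)\,\mathrm{Id}_n$ for each block comes for free from the semiconvexity of $u_i^\delta$.

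In the third step I would extract the matrices $\mathrm{X}_i$ and the sandwich estimate (ii). Setting $\mathrm{A} := D^2\varphi(\bar x_1,\ldots,\bar x_k)$, the sharper upper bound $\mathrm{A} + \varepsilon \mathrm{A}^2$ is produced by the classical quadratic-penalty trick: one tests the maximum against $\tilde{\varphi} + \frac{\varepsilon}{2}(D^2\varphi)^2$ applied to the vector $x - \bar x$, which absorbs the error coming from the second-order Taylor remainder of $\varphi$; the lower bound $-(1/\varepsilon + \|\mathrm{A}\|)\mathrm{Id}_n$ is then obtained by choosing $\delta \sim \varepsilon$ so that the semiconvexity lower bound aligns with the prescribed $\varepsilon$. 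The a priori bounds $|u_i(x_i)|+|\vec{q}_i|+\|\mathrm{X}_i\| \leq \mathrm{C}_\star$ together with $b_i \leq \mathrm{C}_\star$ provide the compactness needed to send $\eta \to 0$ first, then $\delta \to 0$, and to land in the \emph{closed} superjet $\overline{\mathcal{J}}^{2,+}_{\mathrm{B}_1}u_i(\bar x_i)$, whose closedness under uniform convergence transfers the jet information back from $u_i^\delta$ to $u_i$.

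The main obstacle will be the third step, namely producing the two-sided matrix sandwich simultaneously rather than merely the classical one-sided inequality at the maximum. The upper half is essentially the second derivative test, but the lower half requires a careful accounting between the three small parameters $\delta$ (regularization), $\eta$ (Jensen perturbation), and $\varepsilon$ (the parameter appearing in the conclusion), passed to the limit in the correct order. A second, more technical, obstacle is verifying that the hypothesis $b_i \leq \mathrm{C}_\star$ along jets of $u_i$ transfers to jets of the sup-convolutions $u_i^\delta$ uniformly in $\delta$, which is where the locality assumption $|x_i - \bar x_i| \leq r$ is used to keep the ``active'' points of the sup-convolution inside the admissible region.
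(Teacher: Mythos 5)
The paper does not prove this lemma: it is recalled verbatim (up to the garbled hypothesis involving $b_i$, which is copied from the parabolic version) and attributed to \cite[Theorem~3.2]{CIL}, the Crandall--Ishii--Lions ``User's Guide.'' So there is no ``paper's own proof'' to compare against, and a reasonable referee would expect the authors to either cite and stop (as they do) or reproduce the argument from \cite{CIL}.

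Your sketch is essentially the \cite{CIL} proof, and the high-level plan is sound: sup-convolutions to gain semiconvexity, Jensen/Alexandrov to find nearby points of twice differentiability, and the quadratic-penalty device to upgrade the one-sided second-derivative test into the two-sided sandwich. That said, two technical points are glossed over in a way that could sink a written-out version. First, the step you call ``testing the maximum against $\tilde\varphi + \tfrac{\varepsilon}{2}(D^2\varphi)^2$'' is more subtle than stated: in \cite{CIL} the nonlinear $\varphi$ is first reduced to a pure quadratic $\tfrac12\langle A x, x\rangle$ at the max, and the upper bound $A + \varepsilon A^2$ arises from replacing $A$ by $A_\varepsilon = (I - \varepsilon A)^{-1}A$ in the semiconvex matrix lemma (Lemma~A.3 of \cite{CIL}); simply appending a quadratic correction to $\tilde\varphi$ does not directly produce $\varepsilon A^2$. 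Second, the lower bound $-\bigl(\tfrac1\varepsilon + \|A\|\bigr)\mathrm{Id}_n$ does not come from merely ``choosing $\delta \sim \varepsilon$''; it comes out of the same matrix manipulation, since the sup-convolutions are semiconvex with constant $1/\delta$ but the $\delta$-dependence is removed only after the $A \mapsto A_\varepsilon$ substitution. Finally, note that the $b_i \leq \mathrm{C}_\star$ hypothesis in the statement as reproduced here is actually the a~priori bound from the parabolic theorem (\cite[Theorem~8.3]{CIL}) and is vacuous in the elliptic setting because $b_i$ is never defined; your remark that it is a locality condition is not quite what it is doing in \cite{CIL}. None of these affect the viability of the route, but a complete proof would need to fix them and keep the limits $\eta \to 0$ then $\delta \to 0$ (and the closedness of $\overline{\mathcal J}^{2,+}$) in the order you indicate.
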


\section{A.B.P. estimate for models with general  $p-$diffusion}\label{Section2}

This section is devoted to the presentation of the Alexandroff–Bakelman–Pucci (A.B.P.) maximum principle for the model \eqref{1.1}. We start by proving the result for the case of $p = \infty$ diffusion.

\begin{proof}[{ \bf Proof of Theorem \ref{Theorem3.1}:}]
We focus on subsolutions only, as the proof for supersolutions is analogous. The argument proceeds in two steps: first, we assume \(u\) is semiconvex; second, we treat the general case via approximation by sup-convolutions.

\textbf{ ${\mathbf 1^{\textbf {st}}}$ Step:} Assume that \(u\) is semiconvex. Without loss of generality, we may suppose that 
$
\sup_{\partial \Omega} u^{+}=0$
(otherwise, replace \(u\) by \(u - \sup_{\partial \Omega} u^{+}\)), and that there exists \(x_{0} \in \Omega\) with \(u(x_{0}) > 0\) (otherwise, the desired inequality already holds). Extend \(u^{+}\) outside \(\Omega\) by zero; then \(u^{+}\) remains semiconvex and satisfies, in the viscosity sense,
\begin{eqnarray}
-\Psi(x,|D  u^{+}|)\Delta_{\infty}^{\mathrm{N}}u^{+}+\mathscr{H}(x,D  u^{+})\leq f^{+}(x)\qquad \mbox{in }\mathbb{R}^{n}. \label{eq1withu}
\end{eqnarray}

Given \(r>0\), define \(\Gamma_{r}(u^{+})\) as the concave envelope of \(u^{+}\) in  
\[
\Omega^{*}_{r}=\{x\in\mathbb{R}^{n}: \operatorname{dist}(x,\Omega^{*})\leq r\},
\]
where \(\Omega^{*}=\operatorname{conv}(\Omega)\) denotes the convex hull of \(\Omega\).  
By Lemma \ref{lemma5}, the semiconvexity of \(u^{+}\) ensures that \(\Gamma_{r}(u^{+})\in C^{1,1}(\Omega^{*}_{r})\) for every \(r>0\). Thus, we can consider the integral
\begin{equation}\label{integralI}
\mathscr{I}=\int\limits_{D \Gamma_{r}(u^{+})(\Omega^{*}_{r})}\min\{|z|^{2+i_{\Psi}-\sigma-n},\, |z|^{2+s_{\Psi}-n}\}\,dz.
\end{equation}

For convenience, set 
\[
\mathrm{M}=\sup_{\Omega_{r}^{*}}\Gamma_{r}(u^{+})=\sup_{\Omega}u^{+}=\sup_{\Omega}u,
\]
denote by \(\mathrm{C}_{r}(u^{+})\) the contact set of \(u^{+}\) with \(\Gamma_{r}(u^{+})\), and define  
\[
\mathrm{A}_{r}=\mathrm{C}_{r}(u^{+})\cap\{x\in \Omega^{*}_{r}:\exists\,\, D^{2}\Gamma_{r}(u^{+})(x)\}.
\]

We aim to estimate the integral \(\mathscr{I}\) from above and below. For the upper bound, define 
\[
\mathfrak{h}(t) = \min\{t^{i_{\Psi}-\sigma},\, t^{s_{\Psi}}\}.
\] 
Applying the Change of Variables Formula for Lipschitz functions \cite[Theorem 3.9]{EG15} and integration over level sets \cite[Theorem 3.13]{EG15}, we obtain
\begin{eqnarray*}
\mathscr{I} &\leq& \int_{\Omega^{*}_{r}} |D \Gamma_{r}(u^{+})|^{2-n} \mathfrak{h}(|D \Gamma_{r}(u^{+})|) 
\det(-D^{2}\Gamma_{r}(u^{+})) \chi_{\mathrm{A}_{r}} \, dx \\
&=& \int_{0}^{\mathrm{M}} \int_{\{\Gamma_{r}(u^{+})=\tau\}} 
|D \Gamma_{r}(u^{+})|^{1-n} \mathfrak{h}(|D \Gamma_{r}(u^{+})|) 
\det(-D^{2}\Gamma_{r}(u^{+})) \chi_{\mathrm{A}_{r}} \, d\mathcal{H}^{n-1} d\tau. \label{estimateofI1}
\end{eqnarray*}

By item 3 of Proposition \ref{Proposition 1}, for a.e. \(\tau\in (0,\mathrm{M})\),
\[
\det(-D^{2}\Gamma_{r}(u^{+})(x)) \leq -\Delta_{\infty}^{\mathrm{N}}\Gamma_{r}(u^{+})(x) |D \Gamma_{r}(u^{+})(x)|^{\,n-1} \prod_{i=1}^{n-1} \kappa_i(x),
\]
for \(\mathcal{H}^{n-1}\)-a.e. \(x \in \{\Gamma_{r}(u^{+})=\tau\}\), where \(\kappa_i(x)\) (\(i=1,\ldots,n-1\)) denote the principal curvatures of \(\{\Gamma_{r}(u^{+})=\tau\}\) at \(x\). Using this inequality together with \eqref{eq1withu}, we obtain
\begin{eqnarray}
\mathscr{I} &\leq& \int_{0}^{\mathrm{M}} \int_{\{\Gamma_{r}(u^{+})=\tau\}} 
\mathfrak{h}(|D \Gamma_{r}(u^{+})|) \frac{(f^{+}+\varrho |D \Gamma_{r}(u^{+})|^{\sigma}) \chi_{\mathrm{A}_{r}}}{\Psi(x,|D \Gamma_{r}(u^{+})|)} 
\prod_{i=1}^{n-1} \kappa_i(x) \, d\mathcal{H}^{n-1} d\tau \nonumber\\
&\leq& \frac{\mathfrak{L}_1}{\mathfrak{a}} \int_{0}^{\mathrm{M}} \int_{\{\Gamma_{r}(u^{+})=\tau\}} 
(f^{+} + \varrho) \chi_{\mathrm{A}_{r}} \prod_{i=1}^{n-1} \kappa_i(x) \, d\mathcal{H}^{n-1} d\tau \label{estimateofI2} \\
&\leq& \frac{\mathfrak{L}_1}{\mathfrak{a}} \int_{0}^{\mathrm{M}} \| (f^{+}+\varrho)\chi_{\mathrm{C}_{r}(u^{+})} \|_{L^{\infty}(\{u^{+}=\tau\})} 
\int_{\{\Gamma_{r}(u^{+})=\tau\}} \prod_{i=1}^{n-1} \kappa_i(x) \, d\mathcal{H}^{n-1} d\tau \nonumber\\
&=& \frac{\mathfrak{L}_1 \mathcal{H}^{n-1}(\partial \mathrm{B}_{1})}{\mathfrak{a}} 
\int_{0}^{\mathrm{M}} \| (f^{+}+\varrho)\chi_{\mathrm{C}_{r}(u^{+})} \|_{L^{\infty}(\{u^{+}=\tau\})} d\tau, \label{estimateofI3}
\end{eqnarray}
where \eqref{estimateofI2} uses Remark \eqref{obsdePsi} and \eqref{estimateofI3} employs the Gauss--Bonnet Theorem for \(C^{1,1}\) convex sets (Theorem \ref{theorem9}).


Next, following \cite[Theorem 1]{Charro13} and using \eqref{estimateofI3}, one verifies that
\[
\int_{\partial K_{\bar{x},r}(\Omega_{r}^{*})} |z|^{2-n} \mathfrak{h}(|z|) \, dz
\leq \frac{\mathfrak{L}_1 \mathcal{H}^{n-1}(\partial \mathrm{B}_{1})}{\mathfrak{a}}
\int_0^{\mathrm{M}} \| (f^{+}+\varrho)\chi_{\mathrm{C}_{r}(u^{+})} \|_{L^{\infty}(\{u^{+}=\tau\})} d\tau,
\]
where \(K_{\bar{x},r}\) is the concave cone with vertex \((\bar{x}, u^{+}(\bar{x}))\) and base \(\Omega_{r}^{*}\), for \(\bar{x}\) a maximum point of \(u^{+}\) in \(\Omega^{*}\), and \(\partial K_{\bar{x},r}\) its super-differential. Letting \(r \to 0^+\), we obtain
\begin{equation}\label{estimateofI4}
\int_{\partial K_{\bar{x}}(\Omega^{*})} |z|^{2-n} \mathfrak{h}(|z|) \, dz
\leq \frac{\mathfrak{L}_1 \mathcal{H}^{n-1}(\partial \mathrm{B}_{1})}{\mathfrak{a}}
\int_0^{\mathrm{M}} \| (f^{+}+\varrho)\chi_{\mathrm{C}(u^{+})} \|_{L^{\infty}(\{u^{+}=\tau\})} d\tau,
\end{equation}
where \(K_{\bar{x}}\) is the concave cone with vertex \((\bar{x}, u^{+}(\bar{x}))\) and base \(\Omega^{*}\), and \(\partial K_{\bar{x}}\) its super-differential.  
It follows from \cite[Theorem 1]{Charro13} that 
\(\mathrm{B}_{\frac{\mathrm{M}}{d}} \subset \partial K_{\bar{x}}(\Omega^{*})\), where \(d = \operatorname{diam}(\Omega)\).  
Applying the Polar Coordinates Formula \cite[Theorem 3.12]{EG15}, we obtain
\begin{eqnarray}
\int_{\partial K_{\bar{x}}(\Omega^{*})} |z|^{2-n} \mathfrak{h}(|z|) \, dz 
&\geq& \int_0^{\frac{\mathrm{M}}{d}} \int_{\partial \mathrm{B}_1} \rho^{2-n} \mathfrak{h}(\rho) \rho^{\,n-1} \, d\mathcal{H}^{n-1} d\rho \nonumber\\
&\geq& \frac{\mathcal{H}^{n-1}(\partial \mathrm{B}_1)}{2+s_{\Psi}} \left( \frac{\mathrm{M}}{d} \right)^{2+s_{\Psi}}, \label{estimateofI5}
\end{eqnarray}
whenever \(\frac{\mathrm{M}}{d} \leq 1\). Otherwise, we have
\begin{eqnarray}
\int_{\partial K_{\bar{x}}(\Omega^{*})} |z|^{2-n} \mathfrak{h}(|z|) \, dz 
&\geq& \int_0^{\frac{\mathrm{M}}{d}} \int_{\partial \mathrm{B}_1} \rho^{1 + s_{\Psi}} \, d\mathcal{H}^{n-1} d\rho + 
\int_1^{\frac{\mathrm{M}}{d}} \rho^{1 + i_{\Psi} - \sigma} \, d\rho \nonumber\\
&=& \frac{\mathcal{H}^{n-1}(\partial \mathrm{B}_1)}{2 + s_{\Psi}} + \frac{\mathcal{H}^{n-1}(\partial \mathrm{B}_1)}{2 + i_{\Psi} - \sigma} \left[ \left( \frac{\mathrm{M}}{d} \right)^{2 + i_{\Psi} - \sigma} - 1 \right] \nonumber\\
&=& \frac{\mathcal{H}^{n-1}(\partial \mathrm{B}_1)}{2 + s_{\Psi}} \left( \frac{\mathrm{M}}{d} \right)^{2 + i_{\Psi} - \sigma}, \label{estimateofI7}
\end{eqnarray}
where in \eqref{estimateofI7} we used the fact that \(-(\mathrm{M}/d)^{2+i_{\Psi}-\sigma} < -1\).  
Combining \eqref{estimateofI3}, \eqref{estimateofI5}, and \eqref{estimateofI7} yields the desired estimate.

\medskip

\textbf{ ${\mathbf 2^{\textbf {nd}}}$ Step:}For the general case, we introduce, for each \(\varepsilon>0\), the \textit{sup-convolution} of \(u\), defined by
\[
u^{\varepsilon}(x) = \sup_{y \in \overline{\Omega}} \left\{ u(y) - \frac{1}{2\varepsilon} |x-y|^2 \right\}.
\]
Note that \(u^{\varepsilon}\) is semiconvex, \(u^{\varepsilon} \geq u\) for all \(\varepsilon>0\), and it can be verified (cf. \cite[Lemma A.3]{CCKS}) that
\[
-\Psi_{\varepsilon}(x,|D u^{\varepsilon}|)\Delta_{\infty}^{\mathrm{N}} u^{\varepsilon} + \mathscr{H}_{\varepsilon}(x,D u^{\varepsilon}) \leq f_{\varepsilon}(x) \quad \text{in } \Omega_{\varepsilon},
\]
where \(\displaystyle f_{\varepsilon}(x) = \sup_{\mathrm{B}_{r_{\varepsilon}}(x)} f\), with \(r_{\varepsilon} = 2 (\varepsilon \|u\|_{L^{\infty}(\Omega)})^{1/2}\), and \(\Omega_{\varepsilon} = \{ x \in \Omega : \operatorname{dist}(x,\partial\Omega) > r_{\varepsilon} \}\).  

We further define \(\displaystyle \mathrm{M}_{\varepsilon} = \sup_{\Omega_{\varepsilon}} u^{\varepsilon}\), \(\mathrm{M}_{\varepsilon}^{+} = \displaystyle \sup_{\partial \Omega_{\varepsilon}} (u^{\varepsilon})^{+}\), and \(d_{\varepsilon} = \operatorname{diam}(\Omega_{\varepsilon})\).  
Using \textbf{Step 1}, we then obtain
\begin{equation}\label{estimativaparauepsilon}
\min\Bigg\{ \left( \frac{\mathrm{M}_{\varepsilon} - \mathrm{M}_{\varepsilon}^{+}}{d_{\varepsilon}} \right)^{2+i_{\Psi}-\sigma}, \left( \frac{\mathrm{M}_{\varepsilon} - \mathrm{M}_{\varepsilon}^{+}}{d_{\varepsilon}} \right)^{2+s_{\Psi}} \Bigg\} 
\leq \frac{(2+s_{\Psi})\mathfrak{L}_1}{\mathfrak{a}} 
\int_{\mathrm{M}_{\varepsilon}^{+}}^{\mathrm{M}_{\varepsilon}} \| (f^{+}_{\varepsilon} + \varrho) \chi_{\mathrm{C}((u^{\varepsilon})^{+})} \|_{L^{\infty}(\{(u^{\varepsilon})^{+}=\tau\})} \, d\tau.
\end{equation}

Moreover, it is known that \(u^{\varepsilon} \to u\) uniformly in \(\overline{\Omega}\) as \(\varepsilon \to 0^+\) (cf. \cite[Lemma A.2]{CCKS}), so that
\[
\min\Bigg\{ \left( \frac{\mathrm{M}_{\varepsilon} - \mathrm{M}_{\varepsilon}^{+}}{d_{\varepsilon}} \right)^{2+i_{\Psi}-\sigma}, \left( \frac{\mathrm{M}_{\varepsilon} - \mathrm{M}_{\varepsilon}^{+}}{d_{\varepsilon}} \right)^{2+s_{\Psi}} \Bigg\} \to 
\min\Bigg\{ \left( \frac{\mathrm{M} - \mathrm{M}^{+}}{d} \right)^{2+i_{\Psi}-\sigma}, \left( \frac{\mathrm{M} - \mathrm{M}^{+}}{d} \right)^{2+s_{\Psi}} \Bigg\},
\]
where \(\mathrm{M}^{+} = \sup\limits_{\partial \Omega} u^{+}\).  

Furthermore, as in \cite{Charro13}, one can verify that
\[
\limsup_{\varepsilon \to 0^+} \| (f^{+}_{\varepsilon} + \varrho) \chi_{\mathrm{C}((u^{\varepsilon})^{+})} \|_{L^{\infty}(\{(u^{\varepsilon})^{+} = \tau\})} 
\leq \| (f^{+} + \varrho) \chi_{\mathrm{C}(u^{+})} \|_{L^{\infty}(\{u^{+} = \tau\})}.
\]

Thus, passing to the limit \(\varepsilon \to 0^+\) in \eqref{estimativaparauepsilon} yields the desired estimate. This completes the proof.
\end{proof}

\medskip

Next, we establish the Alexandroff–Bakelman–Pucci (A.B.P.) maximum principle for the model \eqref{1.1}, in the case of $p < \infty$.
\begin{proof}[{\bf Proof of Theorem \ref{A.B.P.caseforp}:}]
The proof follows the same strategy as in Theorem \ref{Theorem3.1}. First, observe that
\begin{eqnarray}\label{p-laplacian}
\Delta_p^{\mathrm{N}} u &=& \frac{1}{p}\operatorname{tr}\left[\left(\mathrm{Id}_n+(p-2)\frac{D u\otimes D u}{|D u|^2}\right)D^2u\right]\nonumber\\[0.2cm]
&=& \frac{1}{p}\Delta u + \frac{p-2}{p}\Delta_\infty^{\mathrm{N}} u\nonumber\\[0.2cm]
&=& \frac{1}{p}|D u|\,\operatorname{div}\left(\frac{D u}{|D u|}\right)+\frac{p-1}{p}\Delta_\infty^{\mathrm{N}} u.
\end{eqnarray}
Using the same notation as in Theorem \ref{Theorem3.1}, for almost every contact point \(x \in \mathrm{C}_r(u^+)\) we have
\begin{eqnarray*}
& & -\Psi(x,|D \Gamma_r(u^+)|)\frac{p-1}{p}\Delta^{\mathrm{N}}_\infty \Gamma_r(u^+)(x) + \mathscr{H}(x, D \Gamma_r(u^+))\\
&\leq & -\Psi(x,|D \Gamma_r(u^+)|)\frac{p-1}{p}\Delta^{\mathrm{N}}_\infty \Gamma_r(u^+)(x) 
+ \Psi(x,|D \Gamma_r(u^+)|)\frac{1}{p}|\Gamma_r(u^+)(x)|\sum_{i=1}^{n-1}\kappa_i(x) \\
& & + \mathscr{H}(x,D \Gamma_r(u^+)(x))\\
&=& -\Psi(x,|D \Gamma_r(u^+)|)\Delta_p^{\mathrm{N}} \Gamma_r(u^+)(x) 
+ \mathscr{H}(x,D \Gamma_r(u^+)(x))\\
&\leq & f^+(x),
\end{eqnarray*}
where we used
\[
\operatorname{div}\left(\frac{D \Gamma_{r}(u^{+})(x)}{|D \Gamma_{r}(u^{+})(x)|}\right)=-\sum_{i=1}^{n-1}\kappa_{i}(x).
\]

Hence,
\begin{equation}\label{Aux_equation}
-\Psi(x,|D \Gamma_r(u^+)|)\Delta^{\mathrm{N}}_\infty \Gamma_r(u^+)(x)+\overline{\mathscr{H}}(x,\Gamma_r(u^+))\leq \overline{f}^+(x),
\end{equation}
with
\[
\overline{\mathscr{H}}(x,\xi)=\frac{p}{p-1}\mathscr{H}(x,\xi), 
\qquad 
\overline{f}^+(x)= \frac{p}{p-1}f^+(x).
\]
Moreover,
\[
|\overline{\mathscr{H}}(x,\xi)|=\frac{p}{p-1}|{\mathscr{H}}(x,\xi)|
\leq \frac{p}{p-1}\varrho(x)|\xi|^\sigma.
\]

From here, the arguments of Theorem \ref{Theorem3.1} apply verbatim to derive both upper and lower bounds.  
We only sketch the upper bound, since the lower bound does not depend on the right-hand side of \eqref{Aux_equation}.

Set
\[
\mathfrak{I}= \int_{D \Gamma_{r}(u^{+})(\Omega^{*}_{r})} |z|^{2-n}\mathfrak{h}(|z|)\,dz,
\qquad 
\mathfrak{h}(t)=\min\{t^{i_{\Psi}-\sigma},t^{s_{\Psi}}\}.
\]
Then
\[
\mathfrak{I} \leq  \frac{p}{p-1}\frac{\mathfrak{L}_1\mathcal{H}^{n-1}(\partial \mathrm{B}_{1})}{\mathfrak{a}}
\int_{0}^{\mathrm{M}}\|(f^{+}+\varrho)\chi_{\mathrm{C}_{r}(u^{+})}\|_{L^{\infty}(\{u^{+}=\tau\})}\,d\tau,
\]
and consequently
\[
\int_{\partial K_{\bar{x}}(\Omega^{*})}|z|^{2-n}\mathfrak{h}(z)\,dz
\leq 
\frac{p\,\mathfrak{L}_1\mathcal{H}^{n-1}(\partial \mathrm{B}_{1})}{\mathfrak{a}(p-1)}
\int_{0}^{\mathrm{M}}\|f^{+}\chi_{\mathrm{C}(u^{+})}\|_{L^{\infty}(\{u^{+}=\tau\})}\,d\tau.
\]
The estimate \eqref{estimateofI7} remains valid.

Finally, for the general case,
\[
\min\left\{
\left(\frac{\mathrm{M}-\mathrm{M}^{+}}{d}\right)^{2+i_{\Psi}-\sigma},
\left(\frac{\mathrm{M}-\mathrm{M}^{+}}{d}\right)^{2+s_{\Psi}}
\right\}
\leq 
\frac{p(2+s_{\Psi})\mathfrak{L}_1}{\mathfrak{a}(p-1)}
\|(f^{+}+\varrho)\chi_{\mathrm{C}(u^{+})}\|_{L^{\infty}(\{u^{+}=\tau\})}.
\]
This completes the proof.
\end{proof}

\begin{remark}
Regarding Theorem \ref{A.B.P.caseforp}:
\begin{itemize}
\item The estimates remain stable in \(p\), since \(\frac{p}{p-1}\to 1\) as \(p\to \infty\).
\item Setting \(\Phi(x, t) = t^{p-2} \Psi(x, t)\) yields estimates for the classical \(p\)-Laplacian case. Specifically, if \(u \in C^{0}(\overline{\Omega})\) is a subsolution of
\[
-\Psi(x,|D u|)\Delta_{p} u+\mathscr{H}(x,D u)\leq f(x) \quad \text{in} \quad \Omega,
\]
with \(f\in C^{0}(\Omega)\) and \(1< p<\infty\), then
\begin{equation}\label{Eq3.12}
\sup_{\Omega}u\leq\sup_{\partial\Omega}u^{+}+\mathrm{C}\max\left\{\|f^{+}+\varrho\|_{L^{\infty}(\mathrm{C}(u^{+}))}^{\frac{1}{p-1+i_{\Psi}-\sigma}},\|f^{+}+\varrho\|_{L^{\infty}(\mathrm{C}(u^{+}))}^{\frac{1}{p-1+s_{\Psi}}}\right\},
\end{equation}
where
{\small
\[
\mathrm{C}=\max\left\{\left(\operatorname{diam}(\Omega)^{p+i_{\Psi}-\sigma}\frac{(p+s_{\Psi})\mathfrak{L}_1p}{\mathfrak{a}(p-1)}\right)^{\frac{1}{p-1+i_{\Psi}-\sigma}},\left(\operatorname{diam}(\Omega)^{p+s_{\Psi}}\frac{(p+s_{\Psi})\mathfrak{L}_1p}{\mathfrak{a}(p-1)}\right)^{\frac{1}{p-1+s_{\Psi}}}\right\}.
\]}

In particular, these estimates remain effective even for equations driven by the \(p\)-Laplacian with an additional Hamiltonian term
\[
-\Delta_{p} u+\mathscr{H}(x,D u)\leq f(x) \quad \text{in} \quad \Omega,
\]
where \eqref{Eq3.12} follows under the assumptions \(i_{\Psi} = s_{\Psi} = 0\) and \(\mathfrak{a}= \mathfrak{L} = 1\) (cf.  \cite[Theorem 2.5]{Argiolas} and \cite[Theorem 2.4]{BJrDaST2025}).
\end{itemize}
\end{remark}

\medskip
Next, we recall the definition of $(3-h)$-homogeneous \textit{$\infty-$Laplacian}, given by 
\[
\Delta_{\infty}^{h}u\defeq|D  u|^{-h}\Delta_{\infty}u,
\]
for \(h\in[0,2]\). Note that the infinity-Laplacian is \(\Delta_{\infty}^{h} u\) for \(h=0\) and the normalized infinity-Laplacian when \(h=2\). 

As a direct consequence of the A.B.P. estimate, i.e., Theorem \ref{Theorem3.1}, we obtain the result for the $(3-h)$-homogeneous \textit{$\infty-$Laplacian}.

\begin{corollary}
Consider \(u\in C^{0}(\overline{\Omega})\) be a viscosity solution to
\[
-\Psi(x,|D  u|)\Delta_{\infty}^{h} u+\mathscr{H}(x,D  u)\leq f(x) \quad \text{in} \quad \Omega,
\]
where \(f\in C^{0}(\Omega)\). Then,
\begin{eqnarray*}
\sup_{\Omega}u\leq\sup_{\partial\Omega}u^{+}+\mathrm{C}\max\left\{\|f^{+}+\varrho\|_{L^{\infty}(\mathrm{C}(u^{+}))}^{\frac{1}{3-h+i_{\Psi}-\sigma}},\|f^{+}+\varrho\|_{L^{\infty}(\mathrm{C}(u^{+}))}^{\frac{1}{3-h+s_{\Psi}}}\right\}.
\end{eqnarray*}
Similarity, if \(u\) solves,
\[
-\Psi(x,|D  u|)\Delta_{\infty} u+\mathscr{H}(x,D  u)\geq f(x) \quad \text{in} \quad \Omega,
\]
with \(f\in C^{0}(\Omega)\), then
\[
\sup_{\Omega}u^{-}\leq\sup_{\partial\Omega}u^{-}+\mathrm{C}\max\left\{\|f^{-}+\varrho\|_{L^{\infty}(\mathrm{C}(u^{-}))}^{\frac{1}{3-h+i_{\Psi}-\sigma}},\|f^{-}+\varrho\|_{L^{\infty}(\mathrm{C}(u^{-}))}^{\frac{1}{3-h+s_{\Psi}}}\right\},
\]
where \(\mathrm{C}\) in both cases above is given by
{\small\[
\mathrm{C}=\max\left\{\left(\operatorname{diam}(\Omega)^{4-h+i_{\Psi}-\sigma}\frac{(4-h+s_{\Psi})\mathfrak{L}_1}{\mathfrak{a}}\right)^{\frac{1}{3-h+i_{\Psi}-\sigma}},\left(\operatorname{diam}(\Omega)^{4-h+s_{\Psi}}\frac{(4-h+s_{\Psi})\mathfrak{L}_1}{\mathfrak{a}}\right)^{\frac{1}{3-h+s_{\Psi}}}\right\}.
\]}
\end{corollary}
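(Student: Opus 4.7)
The strategy is to reduce the problem to the normalized case already handled by Theorem \ref{Theorem3.1} by absorbing the $(3-h)$-homogeneity into the diffusion coefficient. Since
\[
\Delta_{\infty}^{h}u = |D u|^{-h}\Delta_{\infty}u = |D u|^{2-h}\,\Delta_{\infty}^{\mathrm{N}}u,
\]
I would set $\widetilde{\Psi}(x,t) := t^{\,2-h}\,\Psi(x,t)$ and rewrite the inequality as
\[
-\widetilde{\Psi}(x,|Du|)\Delta_{\infty}^{\mathrm{N}}u + \mathscr{H}(x,Du) \le f(x) \quad \text{in } \Omega,
\]
in the viscosity sense. A direct check shows that $\widetilde{\Psi}$ satisfies $(\Psi 1)$ and $(\Psi 2)$ with the same structural constants $\mathfrak{a},\mathfrak{b},\mathfrak{L}_1,\mathfrak{L}_2$ and with shifted exponents
\[
i_{\widetilde{\Psi}} = i_{\Psi}+2-h, \qquad s_{\widetilde{\Psi}} = s_{\Psi}+2-h,
\]
both of which remain strictly greater than $-1$ since $i_{\Psi} > -1$ and $h \in [0,2]$.

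Applying Theorem \ref{Theorem3.1} to the rewritten inequality and substituting the shifted exponents yields
\[
\min\!\left\{\!\left(\frac{M}{d}\right)^{4-h+i_{\Psi}-\sigma}\!,\,\left(\frac{M}{d}\right)^{4-h+s_{\Psi}}\!\right\} \le \frac{(4-h+s_{\Psi})\mathfrak{L}_1}{\mathfrak{a}} \int_{\sup_{\partial\Omega}u^{+}}^{\sup_{\Omega}u} \|(f^{+}+\varrho)\chi_{\mathrm{C}(u^{+})}\|_{L^{\infty}(\{u^{+}=\tau\})}\,d\tau,
\]
where $M := \sup_{\Omega}u - \sup_{\partial\Omega}u^{+}$ and $d := \operatorname{diam}(\Omega)$. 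I would then bound the integrand pointwise in $\tau$ by the global $L^{\infty}$-norm on the contact set, obtaining the crude but sufficient estimate
\[
\int_{\sup_{\partial\Omega}u^{+}}^{\sup_{\Omega}u}\|(f^{+}+\varrho)\chi_{\mathrm{C}(u^{+})}\|_{L^{\infty}(\{u^{+}=\tau\})}\,d\tau \le M\,\|f^{+}+\varrho\|_{L^{\infty}(\mathrm{C}(u^{+}))}.
\]

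Inserting this into the previous inequality and isolating $M$ separately in the two branches of the minimum (the branch with exponent $4-h+s_{\Psi}$ is the active one when $M/d\le 1$, the branch with exponent $4-h+i_{\Psi}-\sigma$ is the active one when $M/d>1$) gives, after elementary algebra, the two candidate upper bounds for $M$ with exponents $\tfrac{1}{3-h+s_{\Psi}}$ and $\tfrac{1}{3-h+i_{\Psi}-\sigma}$; taking the maximum of the two produces exactly the stated constant and the stated estimate. The supersolution inequality is obtained in the same way by invoking the second (symmetric) statement in Theorem \ref{Theorem3.1}. The only technical points requiring care are the admissibility of $\widetilde{\Psi}$ under $(\Psi 1),(\Psi 2)$ with the shifted exponents, and tracking the resulting constant through the two-branch dichotomy; beyond that, the corollary is a direct algebraic consequence of Theorem \ref{Theorem3.1}.
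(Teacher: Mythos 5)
Your approach is identical to the paper's: both absorb the factor $|Du|^{2-h}$ into a new diffusion coefficient $\widetilde\Psi(x,t)=t^{2-h}\Psi(x,t)$, observe that $(\Psi1)$--$(\Psi2)$ still hold with the shifted indices $i_\Psi+2-h$ and $s_\Psi+2-h$ and unchanged constants, and then invoke Theorem~\ref{Theorem3.1}. The paper's proof is terser (it simply cites Theorem~\ref{Theorem3.1} and declares the result), while you additionally spell out the algebraic passage from the min/integral inequality of Theorem~\ref{Theorem3.1} to the stated $\sup$ bound -- bounding the $\tau$-integral by $M\,\|f^{+}+\varrho\|_{L^{\infty}(\mathrm{C}(u^{+}))}$ and resolving the two branches of the minimum according to $M/d\le 1$ or $M/d>1$ -- and that fleshed-out conversion is correct and consistent with what the paper intends.
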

\begin{proof}
Note that $u$ satisfies, in the viscosity sense,
\[
-\Phi(x,|D  u|)\Delta_{\infty}^{\mathrm{N}} u+\mathscr{H}(x,D  u)\leq f(x) \quad \text{in} \quad \Omega,
\]
where \(\overline{\Psi}(x,t)=t^{2-h}\Psi(x,t)\). In this case, we have that \(\overline{\Psi}\) satisfies the conditions \((\Psi1)-(\Psi2)\) with \(i_{\overline{\Psi}}=(2-h)+i_{\Psi}\), \(s_{\overline{\Psi}}=(2-h)+s_{\Psi}\) and the same constants \(0<\mathfrak{a}\leq \mathfrak{b}\), \(\mathfrak{L}_1\) and \(\mathfrak{L}_{2}\). Thus, by the previous A.B.P. estimate (Theorem \ref{Theorem3.1}), we obtain the desired result.

\end{proof}

\section{The Comparison Principle and existence of solutions}

In this section, we establish a version of the Comparison Principle.

\begin{proof}[{\bf Proof of Theorem \ref{CPG}}]
We will proceed with the proof using a \textit{reductio ad absurdum} argument. Specifically, assume that there exists a constant $\mathrm{M}_0 > 0$ such that
\begin{equation}
\mathrm{M}_0 \coloneqq \sup_{\overline{\Omega}} (\mathfrak{u} - \mathfrak{v}) > 0.\label{condideM0}
\end{equation}
For each $\varepsilon > 0$, define
\begin{equation}\label{maximum}
\mathrm{M}_\varepsilon = \sup_{(x,y) \in \overline{\Omega} \times \overline{\Omega}} \left( \mathfrak{u}(x) - \mathfrak{v}(y) - \frac{1}{2\varepsilon} |x - y|^2 \right) < \infty.
\end{equation}
Consider $(x_\varepsilon, y_\varepsilon) \in \overline{\Omega} \times \overline{\Omega}$ as the point where the supremum \(\mathrm{M}_{\varepsilon}\) is attained. Following the same approach as in \cite[Lemma 3.1]{CIL}, we know that
\begin{equation}\label{CP1}
\lim_{\varepsilon \to 0} \frac{1}{\varepsilon} |x_\varepsilon - y_\varepsilon|^2 = 0 \quad \text{and} \quad \lim_{\varepsilon \to 0} \mathrm{M}_\varepsilon = \mathrm{M}_0.
\end{equation}
In particular,
\begin{equation}\label{CP2}
x_0 \coloneqq \lim_{\varepsilon \to 0} x_\varepsilon = \lim_{\varepsilon \to 0} y_\varepsilon,
\end{equation}
where $\mathfrak{u}(x_0) - \mathfrak{v}(x_0) = \mathrm{M}_0$. From \eqref{condideM0}, it follows that
\[
\sup_{\partial \Omega} (\mathfrak{u} - \mathfrak{v}) \leq 0 < \mathrm{M}_0.
\]

Furthermore, since the maximizer cannot approach the boundary, there exists a compact subset $\Omega_1 \Subset \Omega$ such that $x_\varepsilon, y_\varepsilon \in \Omega_1$ for all sufficiently small $\varepsilon > 0$. By the Ishii-Lions Lemma \ref{IshiiLions}, there exist matrices $\mathrm{X}, \mathrm{Y} \in \text{Sym}(n)$ such that
\begin{equation}\label{CP3}
\left( \frac{x_\varepsilon - y_\varepsilon}{\varepsilon}, \mathrm{X} \right) \in \overline{\mathcal{J}}^{2,+}_{\Omega_1} \mathfrak{u}(x_\varepsilon) \quad \text{and} \quad \left( \frac{y_\varepsilon - x_\varepsilon}{\varepsilon}, \mathrm{Y} \right) \in \overline{\mathcal{J}}^{2,-}_{\Omega_1} \mathfrak{v}(y_\varepsilon),
\end{equation}
and
\begin{equation}\label{CP4}
-\frac{3}{\varepsilon} \begin{pmatrix}
\mathrm{Id}_n & 0 \\
0 & \mathrm{Id}_n
\end{pmatrix} \leq \begin{pmatrix}
\mathrm{X} & 0 \\
0 & -\mathrm{Y}
\end{pmatrix} \leq \frac{3}{\varepsilon} \begin{pmatrix}
\mathrm{Id}_n & -\mathrm{Id}_n \\
-\mathrm{Id}_n & \mathrm{Id}_n
\end{pmatrix}.
\end{equation}
In particular, we obtain $\mathrm{X} \leq \mathrm{Y}$.

Now, given that $\mathfrak{u}$ and $\mathfrak{v}$ are Lipschitz continuous in $\Omega_1$, there exists a positive constant $\mathfrak{L}_0 := \max\{[\mathfrak{u}]_{\mathrm{\mathrm{C}^{0, 1}(\Omega_1)}}, [\mathfrak{v}]_{\mathrm{\mathrm{C}^{0, 1}(\Omega_1)}}\}$ such that
\[
|u(z_1) - u(z_2)| + |v(z_1) - v(z_2)| \leq \mathfrak{L}_0|z_1 - z_2| \quad \text{for all} \quad z_1, z_2 \in \Omega_1.
\]

Moreover, using the inequality
\[
\mathfrak{u}(x_\varepsilon) - \mathfrak{v}(x_\varepsilon) \leq \mathfrak{u}(x_\varepsilon) - \mathfrak{v}(y_\varepsilon) - \frac{1}{2\varepsilon}|x_\varepsilon - y_\varepsilon|^2,
\]
we deduce the following
\begin{equation}\label{Eq3.9}
\frac{1}{\varepsilon}|x_\varepsilon - y_\varepsilon| \leq 2\mathfrak{L}_0.
\end{equation}

Now, setting $\eta_\varepsilon := \frac{x_\varepsilon - y_\varepsilon}{\varepsilon}$ and using \eqref{H}, \eqref{CP-General} and \eqref{CP3}, we derive
\begin{align*}
\mathfrak{f}_1(x_\varepsilon) &\leq \Psi(x_{\varepsilon}, |\eta_\varepsilon|)F_0(\eta_\varepsilon, \mathrm{X}) + \mathfrak{c}(x_\varepsilon) \mathscr{F}(\mathfrak{u}(x_\varepsilon)) + \mathscr{H}(x_\varepsilon,\eta_\varepsilon) \\
&\leq \Psi(y_{\varepsilon}, |\eta_\varepsilon|)F_0(\eta_\varepsilon, \mathrm{Y}) + \mathfrak{c}(x_\varepsilon) \mathscr{F}(\mathfrak{u}(x_\varepsilon)) + \mathscr{H}(x_\varepsilon,\eta_\varepsilon) \\
&\leq \mathfrak{f}_2(y_\varepsilon) - \mathfrak{c}(y_\varepsilon) \mathscr{F}(\mathfrak{v}(y_\varepsilon)) - \mathscr{H}(y_\varepsilon,\eta_\varepsilon,) + \mathfrak{c}(x_\varepsilon) \mathscr{F}(\mathfrak{u}(x_\varepsilon)) + \mathscr{H}(x_\varepsilon,\eta_\varepsilon) \\
&\leq \mathfrak{f}_2(y_\varepsilon) + \mathscr{F}(\mathfrak{v}(y_\varepsilon)) (\mathfrak{c}(x_\varepsilon) - \mathfrak{c}(y_\varepsilon)) + \left[ \min_{\overline{\Omega}} \mathfrak{c} \right] (\mathscr{F}(\mathfrak{u}(x_\varepsilon)) - \mathscr{F}(\mathfrak{v}(y_\varepsilon))) \\
&\quad + \omega(|x_{\varepsilon}-y_{\varepsilon}|)(1+\mathscr{H}_0(|\eta_{\varepsilon}|)).
\end{align*}

Finally, by letting \(\varepsilon \to 0\) in the last inequality, and using the continuity of  \(\mathfrak{c}\), \(\mathfrak{B}\), and \(\varrho\), and recalling the convergences in  \eqref{CP1},  \eqref{CP2}, and invoking \eqref{Eq3.9}, we obtain
\[
\mathfrak{f}_1(x_0) - \mathfrak{f}_2(x_0) \leq \left[ \min_{\overline{\Omega}} \mathfrak{c} \right] (\mathscr{F}(\mathfrak{u}(x_0)) - \mathscr{F}(\mathfrak{v}(x_0))),
\]
which contradicts the assumptions $\textbf{(a)}$ and $\textbf{(b)}$. Therefore, we conclude that $\mathfrak{u} \geq \mathfrak{v}$ in $\Omega$.
\end{proof}

Next, as a consequence of the Comparison Principle, Theorem \ref{CPG}, we establish the existence of viscosity solutions to the Dirichlet problem:
\begin{equation}\label{DirichletProblem}
\left\{
\begin{array}{rclcl}
    \mathcal{G}(x, D \mathfrak{u}, D^2 \mathfrak{u}) & = & f(x) & \text{in} & \Omega  \\
    u(x) & = & g(x) & \text{on} & \partial \Omega. 
\end{array}
\right.
\end{equation}

\begin{theorem}[\bf Existence of Solutions]\label{Exist_Uniq_sol}
Suppose the assumptions of Theorem \ref{CPG} are satisfied. Additionally, assume that either $\displaystyle \inf_{\Omega} f(x) > 0$ or $\displaystyle \sup_{\Omega} f(x) < 0$. Suppose that there exists a viscosity subsolution $u_{\flat} \in \mathrm{C}^0(\Omega) \cap \mathrm{C}_{\text{loc}}^{0,1}(\Omega)$ and a viscosity supersolution $u^{\sharp} \in \mathrm{C}^0(\Omega) \cap \mathrm{C}_{\text{loc}}^{0,1}(\Omega)$ to the equation \eqref{DirichletProblem} such that $u_{\flat} = u^{\sharp} = g \in \mathrm{C}^0(\partial \Omega)$. Define the class of functions
\begin{equation}
\mathcal{S}_g(\Omega) := \left\{ v \in \mathrm{C}^0(\Omega) \mid v \text{ is a viscosity supersolution to } \eqref{DirichletProblem}, \right. 
\end{equation}
\begin{equation*}
    \left. \text{such that } u_{\flat} \leq v \leq u^{\sharp} \text{ and } v = g \text{ on } \partial \Omega \right\}.
\end{equation*}
Then, the function
\begin{equation}
    u(x) := \inf_{\mathcal{S}_g(\Omega)} v(x), \quad \text{for } x \in \Omega,
\end{equation}
is a continuous (up to the boundary) viscosity solution to the problem \eqref{DirichletProblem}.
\end{theorem}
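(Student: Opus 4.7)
The strategy is Perron's method tailored to the operator $\mathcal{G}$, with Theorem~\ref{CPG} providing the required comparison input. Since $u^{\sharp}\in\mathcal{S}_g(\Omega)$, the class is non-empty and $u(x):=\inf_{v\in\mathcal{S}_g(\Omega)}v(x)$ is well-defined in $\Omega$, with the sandwich $u_{\flat}\leq u\leq u^{\sharp}$ guaranteeing $u=g$ on $\partial\Omega$ and continuity of $u$ at every boundary point.

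The crux is to prove that the lower semicontinuous envelope $u_{*}$ is a viscosity supersolution and the upper semicontinuous envelope $u^{*}$ is a viscosity subsolution of $\mathcal{G}(x,Du,D^2u)=f(x)$ in $\Omega$. The supersolution property of $u_{*}$ follows from the classical stability of supersolutions under pointwise infima: given $\varphi\in C^2$ touching $u_{*}$ from below at some $x_0\in\Omega$, one extracts sequences $v_k\in\mathcal{S}_g$ and $y_k\to x_0$ with $y_k$ local minima of $v_k-\varphi$, and passes to the limit using the continuity of $\Psi$, $F_0$, $\mathscr{H}$ and $f$. The subsolution property of $u^{*}$ is obtained via the standard \textit{bump-up} argument: assume by contradiction that some $\phi\in C^2$ touches $u^{*}$ from above at $x_0$ with $\mathcal{G}(x_0,D\phi(x_0),D^2\phi(x_0))<f(x_0)$. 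By continuity, the perturbation
\[
\phi_{\delta,\eta}(x)=\phi(x)+\delta-\eta|x-x_0|^2
\]
remains a strict classical subsolution on a small ball $B_r(x_0)$ for $\delta,\eta>0$ suitably chosen, satisfies $\phi_{\delta,\eta}(x_0)>u(x_0)$, yet $\phi_{\delta,\eta}<u$ on $\partial B_r(x_0)$. Then the function $U$ equal to $\min\{u,\phi_{\delta,\eta}\}$ on $B_r(x_0)$ and to $u$ elsewhere belongs to $\mathcal{S}_g(\Omega)$ and lies strictly below $u$ at some point in $B_r(x_0)$, contradicting the definition of $u$ as the pointwise infimum of $\mathcal{S}_g(\Omega)$.

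To conclude that $u=u^{*}=u_{*}\in C^{0}(\Omega)$ is a viscosity solution, it suffices to establish $u^{*}\leq u_{*}$. Since the Dirichlet PDE has no zero-order term, Theorem~\ref{CPG} cannot be applied directly to the pair $(u^{*},u_{*})$ with common right-hand side $f$; case \textbf{(b)} would require strict separation $\mathfrak{f}_1>\mathfrak{f}_2$. I therefore \textit{perturb} the subsolution: exploiting the rigid structure $\mathcal{G}=\Psi(x,|Du|)F_0(Du,D^2u)+\mathscr{H}(x,Du)$, for $\varepsilon>0$ small the function $u^{*}_{\varepsilon}=u^{*}+\varepsilon\psi$, with $\psi(x)=R^2-|x-\bar x|^2$ on a ball $B_R(\bar x)\supset\Omega$, becomes a viscosity subsolution of $\mathcal{G}(x,Du,D^2u)=f(x)-\varepsilon\rho(x)$ for some positive continuous $\rho$ independent of $\varepsilon$; the sign hypothesis $\inf_{\Omega} f>0$ or $\sup_{\Omega} f<0$ supplies the uniform slack needed to preserve the viscosity sub/supersolution structure and to guarantee $f>f-\varepsilon\rho$ on $\overline{\Omega}$. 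Case \textbf{(b)} of Theorem~\ref{CPG} then yields $u^{*}_{\varepsilon}\leq u_{*}$; letting $\varepsilon\to 0^{+}$ produces $u^{*}\leq u_{*}$, and the trivial reverse inequality closes the argument.

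The main technical obstacle is to ensure $u^{*},u_{*}\in C^{0,1}_{\mathrm{loc}}(\Omega)$, the regularity required by Theorem~\ref{CPG}. This follows from the sandwich $u_{\flat}\leq u\leq u^{\sharp}$ combined with the interior Lipschitz estimates for viscosity sub/supersolutions of \eqref{1.1} valid under $(\Psi1),(\Psi2)$ and \eqref{H} (cf.\ \cite{BessaDaSilva2025}), and from the fact that such a bound passes to the semicontinuous envelopes. A secondary delicate point is constructing the perturbation $\psi$ so that $\mathcal{G}(x,D u^{*}_{\varepsilon},D^2 u^{*}_{\varepsilon})$ strictly dominates $\mathcal{G}(x,Du^{*},D^2u^{*})$ with a controlled gap — here the normalized structure of $F_0$ (with values $\Delta_{\infty}^{N},\Delta_p^{N}$, etc.) is essential, since a quadratic bump produces an additive $O(\varepsilon)$ increment in the operator, against which the hypothesized sign of $f$ plays decisively.
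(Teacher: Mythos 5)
The paper does not actually include a proof of Theorem~\ref{Exist_Uniq_sol} in its source, so your proposal cannot be compared against a written argument; I can only assess it on its own merits. Your overall Perron strategy (stability of supersolutions under infima for $u_*$, bump-down for $u^*$, perturbed comparison to force $u^* \leq u_*$, Lipschitz regularity to feed Theorem~\ref{CPG}) is the natural one. However, the pivotal perturbation step contains a genuine error of direction, and in addition it rests on an unjustified structural claim.

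\textbf{Direction of the perturbation.} You set $u^*_\varepsilon = u^* + \varepsilon\psi$ with $\psi(x) = R^2 - |x-\bar x|^2 > 0$ on $\overline\Omega$ and $D^2\psi = -2\,\mathrm{Id}$, and you assert that $u^*_\varepsilon$ is a subsolution of $\mathcal{G} = f - \varepsilon\rho$ with $\rho > 0$, so that ``$f > f - \varepsilon\rho$ on $\overline\Omega$'' can be invoked. This cannot feed Theorem~\ref{CPG}(b). In that theorem the subsolution $\mathfrak{u}$ has right-hand side $\mathfrak{f}_1$ and the supersolution $\mathfrak{v}$ has right-hand side $\mathfrak{f}_2$, and case \textbf{(b)} requires $\mathfrak{f}_1 > \mathfrak{f}_2$. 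With your choice $\mathfrak{u} = u^*_\varepsilon$, $\mathfrak{f}_1 = f - \varepsilon\rho$, $\mathfrak{v} = u_*$, $\mathfrak{f}_2 = f$, you obtain $\mathfrak{f}_1 < \mathfrak{f}_2$ — exactly the wrong inequality. Moreover the boundary hypothesis $\mathfrak{u} \leq \mathfrak{v}$ on $\partial\Omega$ also fails: $u^*_\varepsilon = g + \varepsilon\psi > g = u_*$ on $\partial\Omega$ since $\psi > 0$ there. The perturbation must be applied in the opposite sense, e.g.\ $u^*_\varepsilon := u^* - \varepsilon\psi$, which simultaneously lowers the boundary values below $g$ and, because subtracting a concave function \emph{raises} the Hessian, has a chance of producing a subsolution with a right-hand side \emph{larger} than $f$. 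Equivalently, one may perturb the supersolution to $u_* + \varepsilon\psi$.

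\textbf{The claimed $O(\varepsilon)$ additive increment.} Even with the sign fixed, the assertion that a quadratic bump ``produces an additive $O(\varepsilon)$ increment in the operator'' is not automatic for $\mathcal{G}(x,\xi,\mathrm{X}) = \Psi(x,|\xi|) F_0(\xi,\mathrm{X}) + \mathscr{H}(x,\xi)$. The Hessian slot does behave additively, since $F_0(\xi, \mathrm{X} + 2\varepsilon\,\mathrm{Id}) = F_0(\xi,\mathrm{X}) + 2\varepsilon(n+p-2)$ for $F_0 = \Delta^{\mathrm{N}}_p$. But the gradient also shifts, $\xi \mapsto \xi \pm \varepsilon D\psi$, and this enters \emph{nonlinearly} through $\Psi(x,|\xi|)$, through the normalization $\xi/|\xi|$ inside $F_0$, and through $\mathscr{H}(x,\xi)$, which under $(\mathbf{H1})$--\eqref{H} is only known to be bounded by $\varrho|\xi|^\sigma$ and modulus-continuous in $x$, not Lipschitz in $\xi$. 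When a test function $\phi$ has $|D\phi(x_0)|$ comparable to or smaller than $\varepsilon|D\psi(x_0)|$, the normalized part of $F_0$ and the factor $\Psi$ can change by $O(1)$, not $O(\varepsilon)$. Closing the argument therefore requires (i) a uniform positive lower bound on $|D\phi(x_0)|$ at the relevant contact points, which you would need to derive from the sign condition $\inf_\Omega f > 0$ (or $\sup_\Omega f < 0$) together with the Hessian bounds furnished by Ishii--Lions, and (ii) a quantitative modulus of continuity of $\mathcal{G}$ in the gradient slot on the resulting compact gradient range. Both are delicate and neither is addressed; as it stands, the ``uniform slack'' you attribute to the sign of $f$ is asserted rather than proved.
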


\begin{example}[{\bf Non-Uniqueness of solutions - \cite[Example 1.3]{BessaDaSilva2025}}]
In general, the uniqueness of solutions to the Dirichlet problem  
\[
\left\{
    \begin{array}{rclcl}
        \Psi(x,|D u|)\Delta_{p}^{\mathrm{N}}u+\mathscr{H}(x, D u) & = & f(x) & \text{in } & \Omega, \\
        u(x) & = & g(x) & \text{on }& \partial \Omega
    \end{array}
\right.
\]  
is not guaranteed if the right-hand side does not preserve a fixed sign.  

As a concrete illustration, let us assume $i_\Psi = s_\Psi = \theta$ and $\mathscr{H}(x,\xi) = \langle \mathfrak{B}(x), \xi \rangle + \varrho(x)|\xi|^{\sigma}$. Then, both the trivial function $u(x) = 0$ and the nontrivial profile $v(x) = \mathrm{c}(1 - |x|^{\beta})$, with $\beta = \tfrac{2+\theta}{1+\theta}$ and $\mathrm{c} = \beta^{-1}$, turn out to be viscosity solutions of  
\[
\left\{
\begin{array}{rclcl}
|D u(x)|^{\theta} \left( \Delta_p^{\mathrm{N}} u(x) + \langle \mathfrak{B}(x), D u \rangle \right) + \varrho(x) |D u(x)|^{\sigma} & = & 0 & \text{in } & B_{1}, \\
w(x) & = & 0 & \text{on }  & \partial B_{1},
\end{array}
\right.
\]  
where $\mathfrak{B}(x) = \tfrac{p-2}{1+\theta}|x|x$, $\varrho(x) = \tfrac{n+(n-1)\theta}{1+\theta}|x|^{1+\theta-\sigma}$, and the parameters satisfy $\theta < \sigma < 1+\theta$.
\end{example}
\medskip
\section{The Strong Maximum Principle and Liouville-type result}
In this section, using the Comparison Principle,  we first address the proof of the Strong Maximum Principle below:
\begin{proof}[{\bf Proof of Theorem \ref{thm:SMP}:}]
    Let us assume that \( v \) is not identically zero in \( \Omega \). We aim to show that \( v > 0 \) in \( \Omega \). Supposing the  contrary that there exist \(x_0 \in \Omega\) and \(r > 0\) such that \(\mathrm{B}_{2r}(x_0) \Subset \Omega \), \( v > 0 \) in \(\mathrm{B}_r(x_0) \), and \( v(z) = 0 \) for some \( z \in \partial \mathrm{B}_r(x_0) \). Without loss of generality, we assume \( x_0 = 0 \).

Define the function \( u(x) = e^{-\alpha |x|^2} - e^{-\alpha r^2} \), with \(\alpha > 0\) to be chosen \textit{a posteriori}. Clearly, \( u > 0 \) in \( \mathrm{B}_r \) and \( u = 0 \) on \(\partial \mathrm{B}_{r}\). Consider the differential operator
\[
\mathscr{L}[u](x)=\Psi(x,|Du(x)|)\,\Delta_{p}^{\mathrm{N}}u(x) + \mathscr{H}(x, D u)  + \mathfrak{c}(x)\,u^{1+\sigma}(x).
\]
A direct computation yields that for \( \frac{r}{2} \leq |x| \leq r \) and for sufficiently large $\alpha >1$ ({to be chosen {\it a posteriori}}), we have $|Du(x)|=|-2\alpha x e^{-\alpha |x|^2}|<1$. Therefore, by Remark \ref{obsdePsi}, $\Psi(x,|Du|)\geq {\mathfrak{a}}{\mathfrak{L}_2}|Du|^{s_\Psi}$. Thus, for sufficiently large $\alpha>1$, the following inequality holds
 for \(s_\Psi<\sigma<1+s_\Psi\):
\begin{eqnarray}\label{coddebarreirae2}
    \mathscr{L}[u](x) &\geq& {\mathfrak{a}}{\mathfrak{L}_2}|Du(x)|^{s_\Psi}\Delta_{p}^{\mathrm{N}}u(x)-\varrho(x)|Du(x)|^\sigma-\|\mathfrak{c}\|_{L^\infty(B_{2r})}u^{1+\sigma}(x)\nonumber\\
    &=& (2\alpha e^{-\alpha|x|^2}|x|)^{1+s_\Psi} \Bigg[ {\mathfrak{a}}{\mathfrak{L}_2} 2|x|\alpha(p-1) - \frac{(n+p-2)}{|x|} \nonumber\\
    &&\qquad- \varrho(x)\left(2\alpha|x|e^{-\alpha|x|^2}\right)^{\sigma-1-s_\Psi}\nonumber\\&&\qquad-\|\mathfrak{c}\|_{L^\infty(B_{2r})}\frac{e^{-\alpha|x|^2\sigma}}{\alpha\left(2\alpha|x|e^{-\alpha|x|^2}\right)^{s_\Psi}} \frac{\left(1-e^{-\alpha(r^2-|x|^2)}\right)^{1+\sigma}}{2|x|}\Bigg] \nonumber\\
    &\geq& \mathrm{C}_*\cdot (2\alpha e^{-\alpha|x|^2}|x|)^{1+s_\Psi} \Bigg[ \alpha(p-1)r - \frac{2(n+p-2)}{r}- \|\varrho \|_{L^{\infty}(B_{2r})} \nonumber\\ 
    &&\qquad - \| \mathfrak{c} \|_{L^{\infty}(B_{2r})} \frac{1}{r}\left(1 - e^{-\alpha (r^2-|x|^2)} \right)^{1+\sigma} \Bigg] > 0,  
    \end{eqnarray}
provided $\alpha=\alpha(r, n, p, \|\mathfrak{c}\|_{L^\infty(\Omega)}, \|\varrho\|_{L^\infty(\Omega)}, \mathrm{C}_*, \sigma, s_\Psi)>1$ is chosen sufficiently large, where $\mathrm{C}_*>0$ is a constant which may depend on $\mathfrak a, \mathfrak L_2$.
Since \( u > 0 \) on \( \partial \mathrm{B}_{r/2} \), we can choose \( \ell_* \ll 1 \)  ({to be specified later}) such that \( \ell_* u \leq v \) on \( \partial \mathrm{B}_{r/2} \). Now using the Comparison Principle (i.e., Theorem \ref{CPG}), we have \(v \geq \ell_* u\) in the annular region \(\overline{\mathrm{B}}_{r} \setminus \mathrm{B}_{r/2}\).

{Moreover, \( \ell_* u \leq 0 \leq v\) in \( \mathrm{B}_{2r} \setminus \mathrm{B}_r \), so \( \ell_* u \) touches \( v \) from below at the point \( z \) and it is given that $v(z)=0.$} Hence, recalling the definition of viscosity solutions  with the test function \(\ell_* u\), we get
\begin{align*}
&\mathscr{L}[\ell_*u](z) = \Psi(z,|D(\ell_*u(z))|)\Delta_{p}^{\mathrm{N}} (\ell_* u(z)) + \mathscr{H}(z,D (\ell_*u(z)) )+ \mathfrak{c}(z)\, (\ell_* u(z))^{1+\sigma}\\
&\leq\Psi(z,|D(\ell_*u(z))|)\left[ -2\alpha \ell_* e^{-\alpha |x|^2} \left( n - 2\alpha |x|^2 \right) + (p - 2)\left( 1 - 2\alpha |x|^2 \right)\right]\\
&\qquad+ \|\varrho\|_{L^\infty(\Omega)} 2\alpha \ell_* e ^{-\alpha|x|^2} |x|\\
&\leq 0
\end{align*} for sufficiently large $\alpha\gg1$ (dependent on previous parameters as mentioned above) and sufficiently small $\ell_*=\ell_*(n, p, \alpha,r, \|\varrho\|_{L^\infty(\Omega)})\ll1,$
which  contradicts \eqref{coddebarreirae2}. Therefore, \( v > 0 \) in \( \Omega \).

To conclude the proof, we reiterate the above argument and use the fact that for any \( t > 0 \), there exists a constant \( \mathrm{C}_t > 0 \) such that \( 1 - e^{-s} \geq \mathrm{C}_t s \) for all \( s \in [0,t] \), thus, the proof is complete.
\end{proof}

\begin{remark}
It is worth highlighting that  the Proposition \ref{thm:SMP} holds true for the following cases:
\begin{itemize}
    \item[\checkmark] If we replace the term $\mathfrak c(x) v^{1+\sigma}$ by  $\mathfrak c(x) v^{1+s_\Psi}$.
    \item[\checkmark] If we consider $\mathscr{H}(x,\xi)=\langle\mathfrak{B}(x), \xi\rangle |\xi|^{s_\Psi}+ \varrho(x)|\xi|^{\sigma},$ where $s_\Psi< \sigma< 1+s_\Psi$, $\mathfrak B \in C^0(\overline \Omega; \mathbb R^n),$ and $0\leq\varrho \in C^0
(\Omega)$.
\item[\checkmark] If we consider general $F_0$ (see \eqref{g}) in place of $\Delta_p^{\mathrm{N}}$.
\item [\checkmark] If we consider any combination of the above three points separately.
\end{itemize}
\end{remark}

Now, to conclude this section, we establish a Liouville-type result.

\begin{proof}[{\bf Proof of Theorem \ref{LT} :}]
First, we claim that  there exists a compact set $\mathcal{K}\subset \mathbb{R}^n$ such that
\begin{align}\label{clm}\inf_{\mathbb R^n
}u = \min_{\mathcal{K}} u. 
\end{align} To achieve this, first, without loss of generality, by translating, we assume that $u\geq1$ in $\mathbb{R}^n$. By the hypothesis \eqref{hp}, let $\mathcal{K} \subset \mathbb{R}^n$ be a compact set such that $0\in \mathcal{K}$  and $(\mathfrak{B}(x)\cdot x)_+< n$ for all $x\in \mathcal{K}^c.$ Now define $\eta_\alpha(x):=|x|^\alpha,$ $\alpha\in (-1,0).$ For $x\in \mathcal{K}^c$ with $|x|>1,$ $|D\eta_\alpha|=|\alpha| |x|^{\alpha-1}<1.$ Then, we deduce the following:
\begin{align}\label{fv}
 &\Psi(x,|D\eta_\alpha(x)|)\,\Delta_{p}^{\mathrm{N}}\eta_\alpha(x) +   \langle\mathfrak{B}(x), D\eta_\alpha\rangle \Psi(x,|D  \eta_\alpha|)+ \varrho(x)|D \eta_\alpha|^{\sigma}\nonumber\\
 &\geq\Psi(x,|D\eta_\alpha(x)|)[\alpha\{n+(\alpha-1)(p-2)+\alpha-2\}|x|^{\alpha-2}]\nonumber\\
&\qquad\qquad+\Psi(x,|D\eta_\alpha(x)|)\alpha|x|^{\alpha-2}\left(\mathfrak{B}(x)\cdot x\right)\nonumber\\
 &=\Psi(x,|D\eta_\alpha(x)|)\alpha |x|^{\alpha-2}\Big[  (\alpha-1)(p-1)+\left( \left(\mathfrak{B}(x)\cdot x\right)_+- n\right)\nonumber\\ &\qquad\qquad\qquad\qquad- \left( \mathfrak{B}(x)\cdot x\right)_- \Big]\nonumber\\&>0.
\end{align}
Therefore, following \eqref{fv}, we can deduce  that $[\displaystyle\min_{\mathcal{K}} u] \Theta_\alpha \eta_\alpha$ is also a viscosity subsolution to \eqref{me} in $B_{\mathrm R}\setminus \mathcal K$ for all large $\mathrm{R}>0$,  where $\displaystyle \Theta_\alpha :=[\max_{\partial \mathcal{K}} \eta_\alpha]^{-1}$. Hence,  by the Comparison Principle (Theorem \ref{CPG}), we have $\displaystyle u \geq [\min_{\mathcal{K}} u] \Theta_\alpha \eta_\alpha$ in $B_{\mathrm{R}} \setminus \mathcal{K}$. 
Thus, letting $\mathrm{R} \to +\infty$, we get 
$$
\displaystyle u(x) \geq [\min_{\mathcal{K}} u]\Theta_\alpha \eta_\alpha(x) \quad \text{for all} \,\,\,x \in \mathcal{K}^c\,\,\,\text{
and for} \,\,\,-1<\alpha<0.
$$
Now, passing to the limit
$\alpha \to 0^{-}$ in the last relation, we get  \eqref{clm}.\\

Finally, using this, define the function \[w(x) =
u(x) - \min_{\mathcal{K}} u.\]  Clearly $w$ is  non-negative and satisfies the following: \[\Psi(x,|Dw(x)|)\,\Delta_{p}^{\mathrm{N}}w(x) + \mathscr{H}(x, D w)  \leq 0 \quad \text{in}    \quad \mathbb R^n.\] Moreover, $w$ vanishes somewhere in $\mathcal{K}$. Therefore, by the Strong Maximal Principle (Theorem \ref{thm:SMP}), $w\equiv 0$, which implies that $u$ is constant. This completes the proof.
\end{proof}

\subsection*{Acknowledgments}

FAPESP-Brazil has supported J. da Silva Bessa under Grant No. 2023/18447-3. R. Biswas is supported by FONDECYT-Chile postdoctoral Project no. 3230657, J.V. da Silva has received partial support from CNPq-Brazil under Grant No. 307131/2022-0 and FAEPEX-UNICAMP 2441/23 Editais Especiais - PIND - Projetos Individuais (03/2023). G.S. S\'{a} expresses gratitude to the Centro de Modelamiento Matemático (CMM), BASAL fund FB210005, for the center of excellence from ANID-Chile. M. Santos is partially supported by the Portuguese government through FCT-Funda\c c\~ao para a Ci\^encia e a Tecnologia, I.P., under the projects UID/04561/2025 and the Scientific Employment Stimulus - Individual Call (CEEC Individual), DOI identifier https://doi.org/10.54499/2023.08772.CEECIND/CP2831/CT0003.






\end{document}